\definecolor{darkblue}{rgb}{0.0, 0.0, 0.55}
\numberwithin{equation}{section}
\theoremstyle{plain}
\chardef\@x10\chardef\@xv60
\def\tcitime{
\def\@time{%
  \@minute\time\@hour\@minute\divide\@hour\@xv
  \ifnum\@hour<\@x 0\fi\the\@hour:%
  \multiply\@hour\@xv\advance\@minute-\@hour
  \ifnum\@minute<\@x 0\fi\the\@minute
  }}%
\def\QCTOpt[#1]#2{%
  \def\QCTOptB{#1}
  \def\QCTOptA{#2}
}
\def\QCTNOpt#1{%
  \def\QCTOptA{#1}
  \let\QCTOptB\empty
}
\def\Qct{%
  \@ifnextchar[{%
    \QCTOpt}{\QCTNOpt}
}
\def\QCBOpt[#1]#2{%
  \def\QCBOptB{#1}
  \def\QCBOptA{#2}
}
\def\QCBNOpt#1{%
  \def\QCBOptA{#1}
  \let\QCBOptB\empty
}
\def\Qcb{%
  \@ifnextchar[{%
    \QCBOpt}{\QCBNOpt}
}
\def\PrepCapArgs{%
  \ifx\QCBOptA\empty
    \ifx\QCTOptA\empty
      {}%
    \else
      \ifx\QCTOptB\empty
        {\QCTOptA}%
      \else
        [\QCTOptB]{\QCTOptA}%
      \fi
    \fi
  \else
    \ifx\QCBOptA\empty
      {}%
    \else
      \ifx\QCBOptB\empty
        {\QCBOptA}%
      \else
        [\QCBOptB]{\QCBOptA}%
      \fi
    \fi
  \fi
}
\def\GRAPHICSPS#1{%
 \ifcase\GRAPHICSTYPE
   \special{ps: #1}%
 \or
   \special{language "PS", include "#1"}%
 \fi
}%
\def\graffile#1#2#3#4{%
    \leavevmode
    \raise -#4 \BOXTHEFRAME{%
        \hbox to #2{\raise #3\hbox to #2{\null #1\hfil}}}%
}%
\def\draftbox#1#2#3#4{%
 \leavevmode\raise -#4 \hbox{%
  \frame{\rlap{\protect\tiny #1}\hbox to #2%
   {\vrule height#3 width\z@ depth\z@\hfil}%
  }%
 }%
}%
\newif\ifwasdraft
\def\GRAPHIC#1#2#3#4#5{%
 \ifnum\draft=\@ne\draftbox{#2}{#3}{#4}{#5}%
  \else\graffile{#1}{#3}{#4}{#5}%
  \fi
 }%
\def\addtoLaTeXparams#1{%
    \edef\LaTeXparams{\LaTeXparams #1}}%
\newif\ifBoxFrame \BoxFramefalse
\newif\ifOverFrame \OverFramefalse
\newif\ifUnderFrame \UnderFramefalse
\def\BOXTHEFRAME#1{%
   \hbox{%
      \ifBoxFrame
         \frame{#1}%
      \else
         {#1}%
      \fi
   }%
}
\def\doFRAMEparams#1{\BoxFramefalse\OverFramefalse\UnderFramefalse\readFRAMEparams#1\end}%
\def\readFRAMEparams#1{%
 \ifx#1\end%
  \let\next=\relax
  \else
  \ifx#1i\dispkind=\z@\fi
  \ifx#1d\dispkind=\@ne\fi
  \ifx#1f\dispkind=\tw@\fi
  \ifx#1t\addtoLaTeXparams{t}\fi
  \ifx#1b\addtoLaTeXparams{b}\fi
  \ifx#1p\addtoLaTeXparams{p}\fi
  \ifx#1h\addtoLaTeXparams{h}\fi
  \ifx#1X\BoxFrametrue\fi
  \ifx#1O\OverFrametrue\fi
  \ifx#1U\UnderFrametrue\fi
  \ifx#1w
    \ifnum\draft=1\wasdrafttrue\else\wasdraftfalse\fi
    \draft=\@ne
  \fi
  \let\next=\readFRAMEparams
  \fi
 \next
 }%
\def\IFRAME#1#2#3#4#5#6{%
      \bgroup
      \let\QCTOptA\empty
      \let\QCTOptB\empty
      \let\QCBOptA\empty
      \let\QCBOptB\empty
      #6%
      \parindent=0pt%
      \leftskip=0pt
      \rightskip=0pt
      \setbox0 = \hbox{\QCBOptA}%
      \@tempdima = #1\relax
      \ifOverFrame
          \typeout{This is not implemented yet}%
          \show\HELP
      \else
         \ifdim\wd0>\@tempdima
            \advance\@tempdima by \@tempdima
            \ifdim\wd0 >\@tempdima
               \textwidth=\@tempdima
               \setbox1 =\vbox{%
                  \noindent\hbox to \@tempdima{\hfill\GRAPHIC{#5}{#4}{#1}{#2}{#3}\hfill}\\%
                  \noindent\hbox to \@tempdima{\parbox[b]{\@tempdima}{\QCBOptA}}%
               }%
               \wd1=\@tempdima
            \else
               \textwidth=\wd0
               \setbox1 =\vbox{%
                 \noindent\hbox to \wd0{\hfill\GRAPHIC{#5}{#4}{#1}{#2}{#3}\hfill}\\%
                 \noindent\hbox{\QCBOptA}%
               }%
               \wd1=\wd0
            \fi
         \else
            \ifdim\wd0>0pt
              \hsize=\@tempdima
              \setbox1 =\vbox{%
                \unskip\GRAPHIC{#5}{#4}{#1}{#2}{0pt}%
                \break
                \unskip\hbox to \@tempdima{\hfill \QCBOptA\hfill}%
              }%
              \wd1=\@tempdima
           \else
              \hsize=\@tempdima
              \setbox1 =\vbox{%
                \unskip\GRAPHIC{#5}{#4}{#1}{#2}{0pt}%
              }%
              \wd1=\@tempdima
           \fi
         \fi
         \@tempdimb=\ht1
         \advance\@tempdimb by \dp1
         \advance\@tempdimb by -#2%
         \advance\@tempdimb by #3%
         \leavevmode
         \raise -\@tempdimb \hbox{\box1}%
      \fi
      \egroup%
}%
\def\DFRAME#1#2#3#4#5{%
 \begin{center}
     \let\QCTOptA\empty
     \let\QCTOptB\empty
     \let\QCBOptA\empty
     \let\QCBOptB\empty
     \ifOverFrame 
        #5\QCTOptA\par
     \fi
     \GRAPHIC{#4}{#3}{#1}{#2}{\z@}
     \ifUnderFrame 
        \nobreak\par #5\QCBOptA
     \fi
 \end{center}%
 }%
\def\FFRAME#1#2#3#4#5#6#7{%
 \begin{figure}[#1]%
  \let\QCTOptA\empty
  \let\QCTOptB\empty
  \let\QCBOptA\empty
  \let\QCBOptB\empty
  \ifOverFrame
    #4
    \ifx\QCTOptA\empty
    \else
      \ifx\QCTOptB\empty
        \caption{\QCTOptA}%
      \else
        \caption[\QCTOptB]{\QCTOptA}%
      \fi
    \fi
    \ifUnderFrame\else
      \label{#5}%
    \fi
  \else
    \UnderFrametrue%
  \fi
  \begin{center}\GRAPHIC{#7}{#6}{#2}{#3}{\z@}\end{center}%
  \ifUnderFrame
    #4
    \ifx\QCBOptA\empty
      \caption{}%
    \else
      \ifx\QCBOptB\empty
        \caption{\QCBOptA}%
      \else
        \caption[\QCBOptB]{\QCBOptA}%
      \fi
    \fi
    \label{#5}%
  \fi
  \end{figure}%
 }%
\def\makeactives{
  \catcode`\"=\active
  \catcode`\;=\active
  \catcode`\:=\active
  \catcode`\'=\active
  \catcode`\~=\active
}
   \gdef\activesoff{%
      \def"{\string"}
      \def;{\string;}
      \def:{\string:}
      \def'{\string'}
      \def~{\string~}
    }
\def\FRAME#1#2#3#4#5#6#7#8{%
 \bgroup
 \@ifundefined{bbl@deactivate}{}{\activesoff}
 \ifnum\draft=\@ne
   \wasdrafttrue
 \else
   \wasdraftfalse%
 \fi
 \def\LaTeXparams{}%
 \dispkind=\z@
 \def\LaTeXparams{}%
 \doFRAMEparams{#1}%
 \ifnum\dispkind=\z@\IFRAME{#2}{#3}{#4}{#7}{#8}{#5}\else
  \ifnum\dispkind=\@ne\DFRAME{#2}{#3}{#7}{#8}{#5}\else
   \ifnum\dispkind=\tw@
    \edef\@tempa{\noexpand\FFRAME{\LaTeXparams}}%
    \@tempa{#2}{#3}{#5}{#6}{#7}{#8}%
    \fi
   \fi
  \fi
  \ifwasdraft\draft=1\else\draft=0\fi{}%
  \egroup
 }%
\def\TEXUX#1{"texux"}
\long\def\QQQ#1#2{%
     \long\expandafter\def\csname#1\endcsname{#2}}%
\long\def\QQA#1#2{}%
\def\QTR#1#2{{\csname#1\endcsname #2}}
\def\EXPAND#1[#2]#3{}%
\def\NOEXPAND#1[#2]#3{}%
\def\LaTeXparent#1{}%
\def\ChildStyles#1{}%
\def\ChildDefaults#1{}%
\def\QTagDef#1#2#3{}%
\def\QQfnmark#1{\footnotemark}
\def\makeatletter\input gnuindex.sty\makeatother\makeindex{\makeatletter\input gnuindex.sty\makeatother\makeindex}%
\def\initial#1{\bigbreak{\raggedright\large\bf #1}\kern 2\p@\penalty3000}}%
 \def\abstract{%
  \if@twocolumn
   \section*{Abstract (Not appropriate in this style!)}%
   \else \small 
   \begin{center}{\bf Abstract\vspace{-.5em}\vspace{\z@}}\end{center}%
   \quotation 
   \fi
  }%
   \def\registered{\relax\ifmmode{}\r@gistered
                    \else$\m@th\r@gistered$\fi}%
 \def\r@gistered{^{\ooalign
  {\hfil\raise.07ex\hbox{$\scriptstyle\rm\text{R}$}\hfil\crcr
  \mathhexbox20D}}}}{}%
\newdimen\theight
\def\Column{%
 \vadjust{\setbox\z@=\hbox{\scriptsize\quad\quad tcol}%
  \theight=\ht\z@\advance\theight by \dp\z@\advance\theight by \lineskip
  \kern -\theight \vbox to \theight{%
   \rightline{\rlap{\box\z@}}%
   \vss
   }%
  }%
 }%
\def\qed{%
 \ifhmode\unskip\nobreak\fi\ifmmode\ifinner\else\hskip5\p@\fi\fi
 \hbox{\hskip5\p@\vrule width4\p@ height6\p@ depth1.5\p@\hskip\p@}%
 }%
\def\miss{\hbox{\vrule height2\p@ width 2\p@ depth\z@}}%
\def\tcol#1{{\baselineskip=6\p@ \vcenter{#1}} \Column}  %
\def\newfmtname{LaTeX2e}
\def\chkcompat{%
   \if@compatibility
   \else
     \usepackage{latexsym}
   \fi
}
  \DeclareOldFontCommand{\rm}{\normalfont\rmfamily}{\mathrm}
  \DeclareOldFontCommand{\sf}{\normalfont\sffamily}{\mathsf}
  \DeclareOldFontCommand{\tt}{\normalfont\ttfamily}{\mathtt}
  \DeclareOldFontCommand{\bf}{\normalfont\bfseries}{\mathbf}
  \DeclareOldFontCommand{\it}{\normalfont\itshape}{\mathit}
  \DeclareOldFontCommand{\sl}{\normalfont\slshape}{\@nomath\sl}
  \DeclareOldFontCommand{\sc}{\normalfont\scshape}{\@nomath\sc}
\def\alpha{{\Greekmath 010B}}%
\def\beta{{\Greekmath 010C}}%
\def\gamma{{\Greekmath 010D}}%
\def\delta{{\Greekmath 010E}}%
\def\epsilon{{\Greekmath 010F}}%
\def\zeta{{\Greekmath 0110}}%
\def\eta{{\Greekmath 0111}}%
\def\theta{{\Greekmath 0112}}%
\def\iota{{\Greekmath 0113}}%
\def\kappa{{\Greekmath 0114}}%
\def\lambda{{\Greekmath 0115}}%
\def\mu{{\Greekmath 0116}}%
\def\nu{{\Greekmath 0117}}%
\def\xi{{\Greekmath 0118}}%
\def\pi{{\Greekmath 0119}}%
\def\rho{{\Greekmath 011A}}%
\def\sigma{{\Greekmath 011B}}%
\def\tau{{\Greekmath 011C}}%
\def\upsilon{{\Greekmath 011D}}%
\def\phi{{\Greekmath 011E}}%
\def\chi{{\Greekmath 011F}}%
\def\psi{{\Greekmath 0120}}%
\def\omega{{\Greekmath 0121}}%
\def\varepsilon{{\Greekmath 0122}}%
\def\vartheta{{\Greekmath 0123}}%
\def\varpi{{\Greekmath 0124}}%
\def\varrho{{\Greekmath 0125}}%
\def\varsigma{{\Greekmath 0126}}%
\def\varphi{{\Greekmath 0127}}%
\def\nabla{{\Greekmath 0272}}
\def\FindBoldGroup{%
   {\setbox0=\hbox{$\mathbf{x\global\edef\theboldgroup{\the\mathgroup}}$}}%
}
\def\Greekmath#1#2#3#4{%
    \if@compatibility
        \ifnum\mathgroup=\symbold
           \mathchoice{\mbox{\boldmath$\displaystyle\mathchar"#1#2#3#4$}}%
                      {\mbox{\boldmath$\textstyle\mathchar"#1#2#3#4$}}%
                      {\mbox{\boldmath$\scriptstyle\mathchar"#1#2#3#4$}}%
                      {\mbox{\boldmath$\scriptscriptstyle\mathchar"#1#2#3#4$}}%
        \else
           \mathchar"#1#2#3#4%
        \fi 
    \else 
        \FindBoldGroup
        \ifnum\mathgroup=\theboldgroup 
           \mathchoice{\mbox{\boldmath$\displaystyle\mathchar"#1#2#3#4$}}%
                      {\mbox{\boldmath$\textstyle\mathchar"#1#2#3#4$}}%
                      {\mbox{\boldmath$\scriptstyle\mathchar"#1#2#3#4$}}%
                      {\mbox{\boldmath$\scriptscriptstyle\mathchar"#1#2#3#4$}}%
        \else
           \mathchar"#1#2#3#4%
        \fi     	    
	  \fi}
\newif\ifGreekBold  \GreekBoldfalse
\let\SAVEPBF=\pbf
\def\pbf{\GreekBoldtrue\SAVEPBF}%
  \newcounter{equationnumber}  
  \def\mathletters{%
     \addtocounter{equation}{1}
     \edef\@currentlabel{\theequation}%
     \setcounter{equationnumber}{\c@equation}
     \setcounter{equation}{0}%
     \edef\theequation{\@currentlabel\noexpand\alph{equation}}%
  }
    \def\BibTeX{{\rm B\kern-.05em{\sc i\kern-.025em b}\kern-.08em
                 T\kern-.1667em\lower.7ex\hbox{E}\kern-.125emX}}}{}%
\def\AmS{{\protect\usefont{OMS}{cmsy}{m}{n}%
                A\kern-.1667em\lower.5ex\hbox{M}\kern-.125emS}}}{}%
\let\DOTSI\relax
\def\RIfM@{\relax\ifmmode}%
\def\FN@{\futurelet\next}%
\def\iint{\DOTSI\intno@\tw@\FN@\ints@}%
\def\iiint{\DOTSI\intno@\thr@@\FN@\ints@}%
\def\iiiint{\DOTSI\intno@4 \FN@\ints@}%
\def\idotsint{\DOTSI\intno@\z@\FN@\ints@}%
\def\ints@{\findlimits@\ints@@}%
\newif\iflimtoken@
\newif\iflimits@
\def\findlimits@{\limtoken@true\ifx\next\limits\limits@true
 \else\ifx\next\nolimits\limits@false\else
 \limtoken@false\ifx\ilimits@\nolimits\limits@false\else
 \ifinner\limits@false\else\limits@true\fi\fi\fi\fi}%
\def\multint@{\int\ifnum\intno@=\z@\intdots@                          
 \else\intkern@\fi                                                    
 \ifnum\intno@>\tw@\int\intkern@\fi                                   
 \ifnum\intno@>\thr@@\int\intkern@\fi                                 
 \int}
\def\multintlimits@{\intop\ifnum\intno@=\z@\intdots@\else\intkern@\fi
 \ifnum\intno@>\tw@\intop\intkern@\fi
 \ifnum\intno@>\thr@@\intop\intkern@\fi\intop}%
\def\intic@{%
    \mathchoice{\hskip.5em}{\hskip.4em}{\hskip.4em}{\hskip.4em}}%
\def\negintic@{\mathchoice
 {\hskip-.5em}{\hskip-.4em}{\hskip-.4em}{\hskip-.4em}}%
\def\ints@@{\iflimtoken@                                              
 \def\ints@@@{\iflimits@\negintic@
   \mathop{\intic@\multintlimits@}\limits                             
  \else\multint@\nolimits\fi                                          
  \eat@}
 \else                                                                
 \def\ints@@@{\iflimits@\negintic@
  \mathop{\intic@\multintlimits@}\limits\else
  \multint@\nolimits\fi}\fi\ints@@@}%
\def\intkern@{\mathchoice{\!\!\!}{\!\!}{\!\!}{\!\!}}%
\def\plaincdots@{\mathinner{\cdotp\cdotp\cdotp}}%
\def\intdots@{\mathchoice{\plaincdots@}%
 {{\cdotp}\mkern1.5mu{\cdotp}\mkern1.5mu{\cdotp}}%
 {{\cdotp}\mkern1mu{\cdotp}\mkern1mu{\cdotp}}%
 {{\cdotp}\mkern1mu{\cdotp}\mkern1mu{\cdotp}}}%
\def\RIfM@{\relax\protect\ifmmode}
\def\text{\RIfM@\expandafter\text@\else\expandafter\mbox\fi}
\let\nfss@text\text
\def\text@#1{\mathchoice
   {\textdef@\displaystyle\f@size{#1}}%
   {\textdef@\textstyle\tf@size{\firstchoice@false #1}}%
   {\textdef@\textstyle\sf@size{\firstchoice@false #1}}%
   {\textdef@\textstyle \ssf@size{\firstchoice@false #1}}%
   \glb@settings}
\def\textdef@#1#2#3{\hbox{{%
                    \everymath{#1}%
                    \let\f@size#2\selectfont
                    #3}}}
\newif\iffirstchoice@
\def\Let@{\relax\iffalse{\fi\let\\=\cr\iffalse}\fi}%
\def\vspace@{\def\vspace##1{\crcr\noalign{\vskip##1\relax}}}%
\def\multilimits@{\bgroup\vspace@\Let@
 \baselineskip\fontdimen10 \scriptfont\tw@
 \advance\baselineskip\fontdimen12 \scriptfont\tw@
 \lineskip\thr@@\fontdimen8 \scriptfont\thr@@
 \lineskiplimit\lineskip
 \vbox\bgroup\ialign\bgroup\hfil$\m@th\scriptstyle{##}$\hfil\crcr}%
\def\Sb{_\multilimits@}%
\def\endSb{\crcr\egroup\egroup\egroup}%
\def\Sp{^\multilimits@}%
\newdimen\ex@
\def\rightarrowfill@#1{$#1\m@th\mathord-\mkern-6mu\cleaders
 \hbox{$#1\mkern-2mu\mathord-\mkern-2mu$}\hfill
 \mkern-6mu\mathord\rightarrow$}%
\def\leftarrowfill@#1{$#1\m@th\mathord\leftarrow\mkern-6mu\cleaders
 \hbox{$#1\mkern-2mu\mathord-\mkern-2mu$}\hfill\mkern-6mu\mathord-$}%
\def\leftrightarrowfill@#1{$#1\m@th\mathord\leftarrow
\mkern-6mu\cleaders
 \hbox{$#1\mkern-2mu\mathord-\mkern-2mu$}\hfill
 \mkern-6mu\mathord\rightarrow$}%
\def\overrightarrow{\mathpalette\overrightarrow@}%
\def\overrightarrow@#1#2{\vbox{\ialign{##\crcr\rightarrowfill@#1\crcr
 \noalign{\kern-\ex@\nointerlineskip}$\m@th\hfil#1#2\hfil$\crcr}}}%
\def\overleftarrow{\mathpalette\overleftarrow@}%
\def\overleftarrow@#1#2{\vbox{\ialign{##\crcr\leftarrowfill@#1\crcr
 \noalign{\kern-\ex@\nointerlineskip}$\m@th\hfil#1#2\hfil$\crcr}}}%
\def\overleftrightarrow{\mathpalette\overleftrightarrow@}%
\def\overleftrightarrow@#1#2{\vbox{\ialign{##\crcr
   \leftrightarrowfill@#1\crcr
 \noalign{\kern-\ex@\nointerlineskip}$\m@th\hfil#1#2\hfil$\crcr}}}%
\def\underrightarrow{\mathpalette\underrightarrow@}%
\def\underrightarrow@#1#2{\vtop{\ialign{##\crcr$\m@th\hfil#1#2\hfil
  $\crcr\noalign{\nointerlineskip}\rightarrowfill@#1\crcr}}}%
\def\underleftarrow{\mathpalette\underleftarrow@}%
\def\underleftarrow@#1#2{\vtop{\ialign{##\crcr$\m@th\hfil#1#2\hfil
  $\crcr\noalign{\nointerlineskip}\leftarrowfill@#1\crcr}}}%
\def\underleftrightarrow{\mathpalette\underleftrightarrow@}%
\def\underleftrightarrow@#1#2{\vtop{\ialign{##\crcr$\m@th
  \hfil#1#2\hfil$\crcr
 \noalign{\nointerlineskip}\leftrightarrowfill@#1\crcr}}}%
\def\qopnamewl@#1{\mathop{\operator@font#1}\nlimits@}
\let\nlimits@\displaylimits
\def\setboxz@h{\setbox\z@\hbox}
\def\varlim@#1#2{\mathop{\vtop{\ialign{##\crcr
 \hfil$#1\m@th\operator@font lim$\hfil\crcr
 \noalign{\nointerlineskip}#2#1\crcr
 \noalign{\nointerlineskip\kern-\ex@}\crcr}}}}
 \def\rightarrowfill@#1{\m@th\setboxz@h{$#1-$}\ht\z@\z@
  $#1\copy\z@\mkern-6mu\cleaders
  \hbox{$#1\mkern-2mu\box\z@\mkern-2mu$}\hfill
  \mkern-6mu\mathord\rightarrow$}
\def\leftarrowfill@#1{\m@th\setboxz@h{$#1-$}\ht\z@\z@
  $#1\mathord\leftarrow\mkern-6mu\cleaders
  \hbox{$#1\mkern-2mu\copy\z@\mkern-2mu$}\hfill
  \mkern-6mu\box\z@$}
\def\projlim{\qopnamewl@{proj\,lim}}
\def\injlim{\qopnamewl@{inj\,lim}}
\def\varinjlim{\mathpalette\varlim@\rightarrowfill@}
\def\varprojlim{\mathpalette\varlim@\leftarrowfill@}
\def\varliminf{\mathpalette\varliminf@{}}
\def\varliminf@#1{\mathop{\underline{\vrule\@depth.2\ex@\@width\z@
   \hbox{$#1\m@th\operator@font lim$}}}}
\def\varlimsup{\mathpalette\varlimsup@{}}
\def\varlimsup@#1{\mathop{\overline
  {\hbox{$#1\m@th\operator@font lim$}}}}
\def\dfrac#1#2{{\displaystyle {#1 \over #2}}}%
\def\binom#1#2{{#1 \choose #2}}%
\def\align{\@verbatim \frenchspacing\@vobeyspaces \@alignverbatim
You are using the "align" environment in a style in which it is not defined.}
\let\csname endalign*\endcsname =\endtrivlist
\def\alignat{\@verbatim \frenchspacing\@vobeyspaces \@alignatverbatim
You are using the "alignat" environment in a style in which it is not defined.}
\let\csname endalignat*\endcsname =\endtrivlist
\def\xalignat{\@verbatim \frenchspacing\@vobeyspaces \@xalignatverbatim
You are using the "xalignat" environment in a style in which it is not defined.}
\let\csname endxalignat*\endcsname =\endtrivlist
\def\gather{\@verbatim \frenchspacing\@vobeyspaces \@gatherverbatim
You are using the "gather" environment in a style in which it is not defined.}
\let\csname endgather*\endcsname =\endtrivlist
\def\multiline{\@verbatim \frenchspacing\@vobeyspaces \@multilineverbatim
You are using the "multiline" environment in a style in which it is not defined.}
\let\csname endmultiline*\endcsname =\endtrivlist
\def\arrax{\@verbatim \frenchspacing\@vobeyspaces \@arraxverbatim
You are using a type of "array" construct that is only allowed in AmS-LaTeX.}
\def\tabulax{\@verbatim \frenchspacing\@vobeyspaces \@tabulaxverbatim
You are using a type of "tabular" construct that is only allowed in AmS-LaTeX.}
\let\csname endarrax*\endcsname =\endtrivlist
\let\csname endtabulax*\endcsname =\endtrivlist
\def\@@eqncr{\let\@tempa\relax
    \ifcase\@eqcnt \def\@tempa{& & &}\or \def\@tempa{& &}%
      \else \def\@tempa{&}\fi
     \@tempa
     \if@eqnsw
        \iftag@
           \@taggnum
        \else
           \@eqnnum\stepcounter{equation}%
        \fi
     \fi
     \global\tag@false
     \global\@eqnswtrue
     \global\@eqcnt\z@\cr}
 \def\endequation{%
     \ifmmode\ifinner 
      \iftag@
        \addtocounter{equation}{-1} 
        $\hfil
           \displaywidth\linewidth\@taggnum\egroup \endtrivlist
        \global\tag@false
        \global\@ignoretrue   
      \else
        $\hfil
           \displaywidth\linewidth\@eqnnum\egroup \endtrivlist
        \global\tag@false
        \global\@ignoretrue 
      \fi
     \else   
      \iftag@
        \addtocounter{equation}{-1} 
        \eqno \hbox{\@taggnum}
        \global\tag@false%
        $$\global\@ignoretrue
      \else
        \eqno \hbox{\@eqnnum}
        $$\global\@ignoretrue
      \fi
     \fi\fi
 } 
 \newif\iftag@ \tag@false
 \def\tag{\@ifnextchar*{\@tagstar}{\@tag}}
 \def\@tag#1{%
     \global\tag@true
     \global\def\@taggnum{(#1)}}
 \def\@tagstar*#1{%
     \global\tag@true
     \global\def\@taggnum{#1}%
}
\def \be{\begin{equs}}
\def \ee{\end{equs}}
\def \E{\mathbb{E}}
\def \P{\mathbb{P}}
\def \var{\mathrm{var}}
\def \EB{\mathrm{EB}}
\def \TB{\mathrm{TB}}
\def \cov{\mathrm{cov}}
\def \bhat{\hat{b}}
\def \phat{\hat{p}}
\def \k0 {k_0}
\def \k1 {k_1}
\def \k2 {k_2}
\def \k3 {k_3}
\def \sumin {\sum_{i=1}^n}
\def \n2 {\frac{1}{n(n-1)}}
\let\hat\widehat
\let\tilde\widetilde
\def \Leb{\mathrm{Leb}}
\def \bmax {\beta_{\max}}
\def \zk {\bar{Z}_k}
\def \omegahat {\widehat{\Omega}}
\def \ghat {\hat{g}}
\def \epsp {\varepsilon_{\phat}}
\def \epsb {\varepsilon_{\bhat}}
\def \Etheta {\E_{\theta}}
\def \IF {\widehat{\mathrm{IF}}}
\def \psihat {\hat{\psi}}
\def \psihatmk {\psihat_{m,k}}
\def \psihatac {\psihatmk^{\rm ac}}
\def \psihatemp {\psihatmk^{\rm emp}}
\def \Holder{\text{H\"{o}lder}}
\def \BL{\mathbb{L}}
\def \sPi{\mathsf{\Pi}}
\def \IIFF{\mathbb{IF}}
\def \calM{\mathcal{M}}
\def \diff{\mathrm{d}}
\theoremstyle{definition}
\newenvironment{customthm}[1]
  {\innercustomthm}
  {\endinnercustomthm}
\begin{document}

\begin{bibunit}[imsart-nameyear]

\begin{frontmatter}
\title{Semiparametric Efficient Empirical Higher Order Influence Function Estimators}
\runtitle{Empirical HOIF}
\begin{aug}
	\author{\fnms{Lin} \snm{Liu}\orcidlink{0000-0002-9883-7962}\thanksref{t4}\ead[label=e4]{linliu@sjtu.edu.cn}},
	\author{\fnms{Rajarshi} \snm{Mukherjee}\orcidlink{0000-0002-5761-8958}\thanksref{t1}\ead[label=e1]{ram521@mail.harvard.edu}},
	\author{\fnms{Whitney} \snm{K. Newey}\orcidlink{0000-0003-2699-4704}\thanksref{t2}\ead[label=e2]{wnewey@mit.edu}},
	\author{\fnms{James} \snm{M. Robins}\orcidlink{0000-0001-6609-209X}\thanksref{t3}\ead[label=e3]{robins@hsph.harvard.edu}}
	\thankstext{t4}{Assistant Professor, Institute of Natural Sciences, MOE-LSC, School of Mathematical Sciences, CMA-Shanghai and SJTU-Yale Joint Center for Biostatistics and Data Science, Shanghai Jiao Tong University}
	\thankstext{t1}{Associate Professor, Department of Biostatistics, Harvard University}
	\thankstext{t2}{Professor, Department of Economics, Massachusetts Institute of Technology}
	\thankstext{t3}{Professor, Department of Epidemiology and Biostatistics, Harvard University}
\end{aug}

\begin{abstract} 
\cite{robins2008higher} applied the theory of higher order influence functions (HOIFs) to derive an estimator of the mean $\psi$ of an outcome Y in a missing data model with Y missing at random conditional on a vector X of continuous covariates; their estimator, in contrast to other existing estimators but ours, is semiparametric efficient under the minimal H\"{o}lder smoothness conditions derived in \citet{robins2009semiparametric}, together with an additional (non-minimal) H\"{o}lder smoothness condition on the density $g$ of $X$, because that particular estimator depends on a non-parametric estimate of $g$. In this paper, we introduce a new HOIF estimator that has the same asymptotic properties as the previous one, but imposes no smoothness requirement on $g$. This improvement is significant for two reasons. First, one rarely has the knowledge about the smoothness properties of $g$. Second, even when $g$ is smooth, and even if $X$ is just multivariate with fixed dimensions, accurate nonparametric estimation of its density is generally not feasible at the sample sizes often encountered in practice. In fact, to our knowledge, this new HOIF estimator to be studied here remains the \textit{only} semiparametric efficient estimator of $\psi$ under minimal H\"{o}lder smoothness conditions, despite the rapidly growing literature on causal effect estimation. We also show that our estimator can be generalized to the entire class of functionals considered by \cite{robins2008higher} which includes the average effect of a treatment on a response $Y$ when a vector $X$ suffices to control for confounding and the expected conditional variance of $Y$ given $X$. Simulation experiments are also conducted, which demonstrate that our new estimator outperforms previous ones proposed in earlier works on HOIFs in finite samples, when $g$ is not very smooth.
\end{abstract}
\begin{keyword}[class=AMS]
\kwd[Primary ]{62G05}
\kwd{62G20}
\kwd{62F12}
\end{keyword}
\begin{keyword}
\kwd{Higher Order Influence Functions}
\kwd{Doubly Robust Functionals}
\kwd{Semiparametric Efficiency}
\kwd{Higher Order U-Statistics}
\kwd{Causal Inference and Missing Data}
\end{keyword}

\end{frontmatter}
\section{Introduction}
\label{sec:intro}
\cite{robins2008higher}, together with a companion technical report \cite{robins2016technical} containing more details, introduced novel U-statistic based estimators of a class of nonlinear functionals in semi- and non-parametric models. Construction of these estimators was based on the theory of Higher Order Influence Functions (henceforth referred to as HOIFs) \citep{robins2008higher}. HOIFs are U-statistics that represent higher order derivatives of a functional. The authors of the aforementioned papers used HOIFs to construct minimax rate-optimal estimators of an important class of functionals in models with $n^{-1/2}$ minimax rates and in models with higher complexity and hence slower minimax rates, where the model complexity was defined in terms of H\"{o}lder smoothness exponents. This class of functionals is of central importance in biostatistics, epidemiology, economics, and other social sciences and is formally defined in Section~\ref{section_general} below. As specific examples, the class includes the mean of a response $Y$ when $Y$ is missing at random (MAR), the average effect of a treatment on a response $Y$ when treatment assignment is ignorable given a vector $X$ of baseline covariates, and the expected conditional covariance of two variables $A$ and $Y$ given a vector $X$. \cite{robins2008higher} describe other important functionals in this class. Following \cite{robins2008higher}, we shall refer to functionals as $\sqrt{n}$\textit{-estimable} if its minimax estimation rate is $n^{-1/2}$ and as \textit{non-}$\sqrt{n}$\textit{-estimable} if slower.

One may wonder why HOIFs are of interest in the $\sqrt{n}$-estimable case studied in this paper, given the recent progress \citep{newey2018cross, kennedy2023towards} in attaining $\sqrt{n}$-consistency with refined first-order doubly robust estimators under conditions close to the minimal H\"{o}lder smoothness conditions on the nuisance parameters (abbreviated as ``the minimal H\"{o}lder smoothness conditions'' in the sequel) for $\sqrt{n}$-consistency proved in Theorem 3.1 of \cite{robins2009semiparametric}. The initial version of the current article has been available on arXiv since 2017. Yet as the literature stands, the new ``empirical HOIF estimators'' to be studied here {\it remain the only existing $\sqrt{n}$-consistent estimator for the mean of a response $Y$ under MAR under the exact minimal \Holder{} smoothness conditions \citep{robins2009semiparametric}}. All other simpler estimators by refining the first-order doubly robust estimators but not based on HOIFs can only achieve $\sqrt{n}$-consistency under strictly stronger smoothness assumptions. More surprisingly in this case, HOIF estimators offer a ``free lunch'', at least asymptotically and information-theoretically: one may obtain semiparametric efficiency with HOIF estimators whose variance is dominated by the linear term associated with the usual first order influence function but whose bias is corrected using higher order U-statistics, i.e. HOIFs. 

The key contribution of this paper is to introduce empirical HOIF estimators for $\sqrt{n}$-estimable parameters that, unlike previously constructed estimators based on HOIFs, avoid non-parametric estimation of a multidimensional density $g$. This is practically important because accurate multidimensional non-parametric density estimation is generally infeasible at the sample sizes often encountered in applications. Indeed, in Section~\ref{section_simulations} we present the results of a simulation study demonstrating that our new empirical HOIF estimator can improve upon existing HOIF estimators in finite samples. Arguably more importantly, the previous HOIF estimators constructed in \citet{robins2008higher} still rely on an extra smoothness assumption on the density $g$ to attain $\sqrt{n}$-rate beyond the minimal H\"{o}lder smoothness conditions. However, the new empirical HOIF estimators, since they completely bypass nonparametric density estimation, do not need to impose an extra smoothness assumption on $g$, and hence achieve $\sqrt{n}$-consistency or semiparametric efficiency \emph{exactly under the minimal H\"{o}lder smoothness conditions}, This article therefore closes an important theoretical gap in the literature.

To our surprise, the idea behind our new estimator is exceedingly simple. For $\sqrt{n}$-estimable parameters, all HOIF estimators considered heretofore have required an estimate of the inverse of a large Gram matrix of dimension of order $o (n)$ whose entries are expectations under a nonparametric estimate $\hat{g}$ of the true density $g$. Our new HOIFs estimator simply uses the inverse of the empirical/sample Gram matrix estimator (expectations under the empirical distribution rather than $\hat{g}$), thereby avoiding estimation of $g$. We refer to the new estimators as empirical HOIF estimators due to the use of the sample Gram matrix. Our main technical contribution is a proof that the new estimator is $\sqrt{n}$-consistent and semiparametric efficient under the minimal H\"{o}lder smoothness conditions derived in \citet{robins2009semiparametric}. 


The paper is organized as follows. In Sections~\ref{section_missing_data_obs}, we motivate the need for HOIF estimators and explain when and why HOIF estimators could have improved properties compared to the more commonly used first-order estimators. For the sake of concreteness, we do so in the context of the specific example of estimating the mean of a response subject to missing at random (MAR). This example is isomorphic to the problem of estimating the mean of the potential outcome in the treatment arm under no unmeasured confounding. In Section~\ref{section_missing_data_estimator} we introduce our new empirical HOIF estimator. In Section~\ref{section_missing_data_theorem} we analyze the large sample statistical properties of our estimator and compare its behavior to the HOIF estimator of \cite{robins2008higher, robins2017minimax}\footnote{The original proof on the variance bound of the estimator in \citet{robins2017minimax} used Lemma 14.1 in the supplementary materials of \citet{robins2017minimax}. But Lemma 14.1 is incorrect. This error was spotted while writing the current paper. The updated arXiv version \citep{robins2023minimax} of \citet{robins2017minimax} has corrected the proof by using Hoeffding decomposition.}. In Section~\ref{section_missing_data_adapt} we show that in contrast with the estimators in \citet{robins2008higher, robins2017minimax}, the empirical HOIF estimator is semiparametric efficient under \textit{minimal conditions} when the complexity of the model is defined in terms of \text{H\"{o}lder}{} smoothness classes. In Section~\ref{section_general} we extend the results of Section~\ref{section_missing_data} to the more general class of doubly robust functionals studied by \cite{robins2008higher}. Section~\ref{section_simulations} provides simulation experiments that support the theoretical results developed in this paper. Section~\ref{section_literature} provides a literature review and Section~\ref{section_discussions} discusses implications of the results and open problems. Finally we collect the proofs and required technical lemmas in the Appendix.

\section{Review of and motivation for HOIF estimators}
\label{section_missing_data_obs} 

To explain why HOIF estimators can be useful in the $\sqrt{n}$-estimable case, we focus on the following example of estimating the mean response $Y$ when $Y$ is MAR. We observe $N$ i.i.d. copies of observed data vector $W = (X^{\top}, A, A Y)^{\top}$. Here $A \in \{0, 1\}$ is the indicator of the event that a response $Y$ is observed and $X$ is a $d$-dimensional vector of covariates with density $f(x)$ with respect to Lebesgue measure on a connected and compact set in $\mathbb{R}^{d}$, which we assume to be $[0, 1]^{d}$ from now on. Define 
\begin{align*}
B \coloneqq b(X) \coloneqq \mathbb{E} (Y | X, A = 1) \text{ and } \Pi \coloneqq \pi (X) \coloneqq \mathbb{P} (A = 1 | X) = \mathbb{E} (A | X),
\end{align*}
where $x \mapsto b (x)$ is the outcome regression function and $x \mapsto \pi (x)$ is the probability of missingness. Our goal is to estimate $\psi \coloneqq \mathbb{E} \left[ \frac{AY}{\pi (X)} \right] = \mathbb{E}\left[ b(X) \right] = \int b (x) f (x) \diff x$. Interest in $\psi $ lies in the fact that it is the marginal mean of $Y$ under MAR that $\mathbb{P} (A = 1 | X, Y) = \pi (X)$. It will be useful to reparametrize the model by $\theta = (b, p, g)$ for functions $x \mapsto b (x), x \mapsto p (x), x \mapsto g (x)$ where $p (\cdot) \coloneqq \pi^{-1} (\cdot), g (\cdot) \coloneqq \mathbb{P} (A = 1 | X = \cdot) f(\cdot) = \pi (\cdot) f(\cdot) = f (\cdot | A = 1) \mathbb{P} (A = 1)$. Further, it is easy to see that the parameters $b, p, g$ are variationally independent, meaning that the range of possible values that any one of $b, p, g$ can take is invariant to the values of the other two parameters. As discussed in \cite{robins2008higher,robins2017minimax}, the parametrization $(b, p, g)$ is more natural than $(b, \pi, f)$, as will be evident from the formulas provided below. We also assume that $g$ is absolutely continuous with respect to the Lebesgue measure for notational convenience. However, this assumption is not needed for the main results of our paper; see Remark~\ref{rem:abs_cont}.2 below. We write the corresponding probability measure, expectation, and variance operators as $\mathbb{P}_{\theta}, \Etheta$, and $\var_{\theta}$ respectively. Finally, we write the target functional $\theta \mapsto \psi (\theta)$ of interest as
\begin{equation}
\label{param}
\chi(\mathbb{P}_{\theta}) \coloneqq \psi (\theta) = \int b (x) p (x) g (x) \diff x. 
\end{equation}

We assume that the law of $W$ belongs to a model 
\begin{equation*}
\mathcal{M} \coloneqq \mathcal{M} (\Theta) \coloneqq \left\{ \mathbb{P}_{\theta}, \theta \in \Theta \right\},
\end{equation*}
where for some fixed $\overline{\sigma} > \underline{\sigma} > 0$, 
\begin{equation} \label{eqn:theta_def}
\Theta \subseteq \{\theta: \inf_{x} \pi (x) \geq \underline{\sigma}, \inf_{x} g(x) \geq \underline{\sigma}, \sup_{x} g (x) \leq \overline{\sigma}\}.
\end{equation}
We also assume that the model $\mathcal{M}$ is locally nonparametric, in the sense that the tangent space at each $\mathbb{P}_{\theta} \in \mathcal{M}$ equals $L_{2} (\mathbb{P}_{\theta})$, intersected with all zero-mean functions under $\mathbb{P}_{\theta}$. \cite{ritov1990achieving} and \cite{robins1997toward} have shown that no uniformly consistent estimator for $\psi (\theta)$, let alone a $\sqrt{n}$-consistent estimator, exists under $\mathcal{M}$ if we do not impose any smoothness or other structural assumptions on the nuisance parameters $(b, p, g)$. 

One common strategy is to impose \Holder-type smoothness conditions on the nuisance parameters \citep{stone1982optimal}, which is the structural assumption that we choose to focus on in this paper. We define \Holder{} classes in detail later in Section~\ref{section_missing_data_adapt}. The lower bounds in \cite{robins2009semiparametric} and upper bounds in \cite{robins2017minimax} together proved that if $\mathcal{M}$ specifies that $b$, $p$ and $g$ belong to \Holder{} balls with exponents $\beta_{b}$, $\beta_{p}$ and $\beta_{g}$ (see Definition \ref{def_Holder} for the precise meaning of \Holder{} balls), then, \textit{provided that $\beta_{g} > \epsilon$ for some $\epsilon > 0$}, (i) $\beta_{b} + \beta_{p} \geq \frac{d}{2}$ is necessary and sufficient for the existence of a $\sqrt{n}$-consistent estimator of $\psi (\theta)$ and (ii) if $\beta_{b} + \beta _{p} > \frac{d}{2}$, there exists a semiparametric efficient estimator, i.e. a regular and asymptotically linear (RAL) estimator of $\psi (\theta)$ with the first order influence function $\mathrm{IF}_{1} (\theta)$ defined in the paragraph below. In this paper, we show that the above results continue to hold, even without imposing any smoothness condition on $g$ except for it being bounded from above and below as in \eqref{eqn:theta_def}. We obtain this result by exhibiting a new semiparametric efficient (resp. $\sqrt{n}$-consistent) estimator of $\psi (\theta)$ whenever $\beta_{b} + \beta _{p} > \frac{d}{2}$ (resp. $\beta_{b} + \beta _{p} \geq \frac{d}{2}$), oblivious to the smoothness condition on $g$.

It is well-known \citep{robins1995semiparametric, hahn1998role} that the unique first order influence function \citep{newey1990semiparametric, bickel1993efficient, ichimura2022influence} for $\psi$ at $\theta$ in Model~$\mathcal{M}$ is 
\begin{equation*}
\mathrm{IF}_{1} (\theta) = A p(X) (Y - b(X)) + b(X) - \psi (\theta),
\end{equation*}
which we can also succinctly write as $A P (Y - B) + B - \psi (\theta)$, where we recall that we denote $b (X)$ as $B$ and $p (X)$ as $P$ in the beginning of this section. To construct the usual first order estimator, we first divide the whole sample with size $N$ into an estimation sample with size $n$ and a training sample with size $n_{\rm tr} = N - n$ satisfying $n \asymp n_{\rm tr}$. Because $\mathrm{IF}_{1} (\theta)$, like all influence functions, has mean zero by definition, the natural first order estimator $\hat{\psi}_{1}$ of $\psi (\theta)$ is: 
\begin{equation*}
\hat{\psi}_{1} \coloneqq \frac{1}{n} \sum_{i=1}^{n} A_{i} \hat{p}(X_{i}) (Y_{i} - \hat{b}(X_{i})) + \hat{b}(X_{i}) = \frac{1}{n} \sum_{i=1}^{n} A_{i} \hat{P}_{i} (Y_{i} - \hat{B}_{i}) + \hat{B}_{i},
\end{equation*}
where $\hat{b} (\cdot)$ and $\hat{p} (\cdot)$ are estimated nuisance functions computed from the training sample and we similarly denote $\hat{B} \coloneqq \hat{b} (X)$ and $\hat{P} \coloneqq \hat{p} (X)$ for short. Conditional on the training sample, $\hat{\psi}_{1}$ is the sum of $n$ i.i.d. random variables, and hence it is asymptotically normally distributed with mean $\psi (\theta) + \mathsf{cBias}_{\theta} (\hat{\psi}_{1})$ and variance of order $1 / n$, where by straightforward algebra
\begin{align*}
\mathsf{cBias}_{\theta} (\hat{\psi}_{1}) & \coloneqq \mathbb{E}_{\theta} \left[ A (\hat{P} - P) (B - \hat{B}) \mid \textrm{training sample} \right]  \\
& =\int (\hat{p}(x) - p(x)) (b(x) - \hat{b}(x)) g (x) \diff x
\end{align*}
is the conditional bias of $\hat{\psi}_{1}$ (see Appendix~\ref{app:cbias} for its derivation). Henceforth, we shall often suppress the dependence on the training sample in the notation for convenience. $\mathsf{cBias}_{\theta} (\hat{\psi}_{1})$ needs to be $o_{\mathbb{P}_{\theta}} (n^{-1/2})$ (resp. $O_{\mathbb{P}_{\theta}} (n^{-1/2})$) to ensure semiparametric efficiency (resp. $\sqrt{n}$-consistency) of $\hat{\psi}_{1}$. Under the \Holder{} smoothness conditions on $\theta = (b, p, g)$, if $\hat{b}$ and $\hat{p}$ are minimax rate optimal estimators of $b$ and $p$, their respective rates of convergence in $L_{2} (\mathbb{P}_{\theta})$-norm are $n^{-\frac{\beta_{b}}{2\beta_{b}+d}}$ and $n^{-\frac{\beta_{p}}{2\beta_{p}+d}}$, and hence, by the Cauchy-Schwarz inequality, $\mathsf{cBias}_{\theta} (\hat{\psi}_{1})$ is $O_{\mathbb{P}_{\theta}} (n^{-\frac{\beta_{b}}{2\beta_{b}+d} - \frac{\beta_{p}}{2\beta_{p}+d}})$. This suggests that when $\frac{\beta_{b}}{2\beta_{b}+d}+\frac{\beta_{p}}{2\beta _{p}+d} < 0.5$, $\hat{\psi}_{1}$ may fail to be $\sqrt{n}$-consistent. As a concrete example, suppose $\beta_{b} = \beta_{p} = \frac{d}{4}$, then $\mathsf{cBias}_{\theta} (\hat{\psi}_{1})$ is $O_{\mathbb{P}_{\theta}} (n^{-\frac{\beta_{b}}{2\beta_{b}+d} - \frac{\beta_{p}}{2\beta_{p}+d}}) = O_{\mathbb{P}_{\theta}} (n^{-1/3})$, which suggests that $\hat{\psi}_{1}$ may not be $\sqrt{n}$-consistent. 

A natural idea to improve $\hat{\psi}_{1}$ is to estimate its (conditional) bias $\mathsf{cBias}_{\theta} (\hat{\psi}_{1})$ and then to construct a new estimator $\hat{\psi}_{2}$ of $\psi (\theta)$ that subtracts the estimate of $\mathsf{cBias}_{\theta} (\hat{\psi}_{1})$ from $\hat{\psi}_{1}$. HOIF estimators can be viewed as a general scheme for instantiating this bias reduction idea; see \cite{van2014higher} for a pedagogical review. In the special case of our MAR missing data problem, the bias reduction scheme proceeds as follows. One first chooses a vector $\bar{z}_{k} (\cdot) = \left( z_{1} (\cdot), \ldots, z_{k} (\cdot) \right)^{\top}$ of $k$ (basis) functions of the covariates $X$ (see Section~\ref{section_missing_data_adapt} for further discussions on the requirements on these functions)\footnote{In this paper, we restrict ourselves not to choose the basis functions using any data-driven methods, because data-driven basis selection is a difficult open problem for HOIFs; see the end of Section~\ref{section_simulations} and Section~\ref{section_discussions}.}. Let $Z_{j} \coloneqq z_{j} (X)$ for $j = 1, \cdots, k$ and $\bar{Z}_{k} \coloneqq \bar{z}_{k} (X)$. Then by Pythagorean theorem, $\mathsf{cBias}_{\theta} (\hat{\psi}_{1})$ can be decomposed as follows: 
\begin{align*}
& \ \mathsf{cBias}_{\theta} (\hat{\psi}_{1}) = \int (\hat{p}(x) - p(x)) (b(x) - \hat{b}(x)) g (x) \diff x \\
= & \ \int \mathsf{\Pi}_{g, \bar{z}_{k}} [\hat{p} - p] (x) \mathsf{\Pi}_{g, \bar{z}_{k}} [b - \hat{b}] (x) g (x) \diff x + \int \mathsf{\Pi}_{g, \bar{z}_{k}}^{\perp} [\hat{p} - p] (x) \mathsf{\Pi}_{g, \bar{z}_{k}}^{\perp} [b - \hat{b}] (x) g (x) \diff x,
\end{align*}
where $h \mapsto \mathsf{\Pi}_{g, \bar{z}_{k}} [h]$ denotes the $L_{2} (g)$-projection of any function $h$ onto the orthogonal complement to the linear space spanned by $\bar{z}_{k}$ and reads as:
\begin{align*}
& \mathsf{\Pi}_{g, \bar{z}_{k}} [h] (\cdot) = \left( \int h(x) \bar{z}_{k}(x) g(x) \diff x \right)^{\top} \Omega^{-1} \bar{z}_{k} (\cdot) = \mathbb{E}_{\theta} [A h (X) \bar{z}_{k} (X)]^{\top} \Omega^{-1} \bar{z}_{k} (\cdot), \\
& \text{with } \Omega \coloneqq \int \bar{z}_{k}(x) \bar{z}_{k}(x)^{\top} g(x) \diff x = \mathbb{E}_{\theta} [A \bar{z}_{k}(X) \bar{z}_{k}(X)^{\top}],
\end{align*}
and $h \mapsto \mathsf{\Pi}_{g, \bar{z}_{k}}^{\perp} [h] (\cdot)$ denotes orthogonal complement of the projection operation $\mathsf{\Pi}_{g, \bar{z}_{k}}$. Following \cite{robins2008higher} and \cite{li2011higher}, we refer to the first term in the above bias decomposition as the first-order estimation bias $\EB_{1, k} (\theta)$ and the second term as the truncation bias $\TB_{k} (\theta)$ for reasons explained below.

Noting that
\begin{equation*}
\sPi_{g, \bar{z}_{k}} [h] (X) = \mathbb{E}_{\theta} [A h(X) \bar{z}_{k}(X)]^{\top} \Omega^{-1} \bar{z}_{k} (X), 
\end{equation*}
we thus have 
\begin{align*}
\EB_{1, k} (\theta) & = \mathbb{E}_{\theta} \left[ \mathbb{E}_{\theta} [A (\hat{P} - P) \bar{Z}_{k}^{\top}]
\Omega^{-1} A \bar{Z}_{k} \bar{Z}_{k}^{\top} \Omega^{-1} \mathbb{E}_{\theta} [\bar{Z}_{k} A (B - \hat{B})] \right] \\
& = \mathbb{E}_{\theta} [A (\hat{P} - P) \bar{Z}_{k}]^{\top} \Omega^{-1} \mathbb{E}_{\theta} [\bar{Z}_{k} A (B - \hat{B})] \\
& = \mathbb{E}_{\theta} [(A \hat{P} - 1) \bar{z}_{k} (X)]^{\top} \Omega^{-1} \mathbb{E}_{\theta} [\bar{z}_{k} (X) A (B - \hat{B})] \\
& = - \mathbb{E}_{\theta} [(1 - A \hat{P}) \bar{z}_{k} (X)]^{\top} \Omega^{-1} \mathbb{E}_{\theta} [\bar{z}_{k} (X) A (Y - \hat{B})].
\end{align*}

From this last expression it follows that were $\Omega$ known, then $-\EB_{1, k} (\theta)$ can be unbiasedly estimated by the following oracle second-order U-statistic: 
\begin{equation*}
\widehat{\mathbb{IF}}_{2, 2, k} (\Omega) \coloneqq \frac{(n - 2)!}{n!} \sum_{1 \leq i_{1} \neq i_{2} \leq n} \IF_{2, 2, k, \bar{i}_{2}} (\Omega),
\end{equation*}
where $\IF_{2, 2, k, \bar{i}_{2}} (\Omega) \coloneqq [(1 - A \hat{p} (X)) \bar{z}_{k} (X)^{\top}]_{i_{1}} \Omega^{-1} [\bar{z}_{k} (X) A (Y - \hat{b} (X))]_{i_{2}}$\footnote{We briefly comment on our choice of notation. $\IF_{2, 2, k, \bar{i}_{2}}$ and $\widehat{\mathbb{IF}}_{2, 2, k}$ introduced here, and the more general $\IF_{j, j, k, \bar{i}_{j}}$ and $\widehat{\mathbb{IF}}_{j, j, k}$ to be introduced in Section~\ref{section_missing_data} are chosen to be consistent with the notation system in \citet{robins2008higher} and \citet{robins2016technical}, in which the original HOIF theory was developed. In this paper, the two $j$'s in the subscript always take the same value.}. Here we introduce the short-hand notation $[f (O)]_{i} \coloneqq f (O_{i})$ for any function $f$, which will be used throughout this paper. We call this statistic oracle because it depends on the true $\Omega$. We then obtain the bias corrected oracle estimator $\hat{\psi}_{2, k} (\Omega) \coloneqq \hat{\psi}_{1} + \widehat{\mathbb{IF}}_{2, 2, k} (\Omega)$. It follows that $\hat{\psi}_{2, k} (\Omega)$ is an unbiased estimator of the so-called truncated parameter $\widetilde{\psi}_{2, k} (\theta) = \psi (\theta) + \TB_{k} (\theta)$. The truncation bias $\TB_{k} (\theta)$ is defined as the difference between the truncated parameter $\widetilde{\psi}_{2, k} (\theta)$ and the parameter $\psi (\theta)$ of actual interest, hence the name. The bias of $\hat{\psi}_{1}$ as an estimator of the truncated parameter $\widetilde{\psi}_{2, k} (\theta)$ is equal to $\EB_{1, k} (\theta)$, which is, as we have seen above, unbiasedly estimated by $-\hat{\IIFF}_{2, 2, k} (\Omega)$.

\cite{robins2008higher} (Theorem 3.21) show that $\var_{\theta} [\widehat{\mathbb{IF}}_{2, 2, k} (\Omega)]$ is of order $\frac{k}{n^{2}} + \frac{1}{n}$, which for $k = O (n)$ is smaller than or equal to the order of $\var_{\theta} [\hat{\psi}_{1}]$; hence, asymptotically, we do not increase the order $n^{-1}$ of the variance of $\hat{\psi}_{1}$ when using $\hat{\psi}_{2, k} (\Omega)$ to correct bias. \cite{robins2008higher} (Theorems 3.13 \& 3.14) define HOIFs and prove that $\hat{\psi}_{2, k} (\Omega)$ is the efficient second order influence function of the truncated parameter $\widetilde{\psi}_{2,k} (\theta)$. However the current paper can be read without knowing either the definition or theory of HOIFs, even though the estimators (e.g. $\hat{\psi}_{2, k} (\Omega)$) in \cite{robins2008higher} were derived using such theory.

In contrast with $\EB_{1, k} (\theta)$, $\TB_{k} (\theta)$ cannot be unbiasedly estimated from data. However if the approximations of functions in $L_{2} (g)$ by $\bar{z}_{k} (\cdot)$ are sufficiently accurate for $\TB_{k} (\theta)$ to be of $O_{\mathbb{P}_{\theta}} (n^{-1/2})$, then the bias of $\hat{\psi}_{2, k} (\Omega)$ as an estimator of $\psi (\theta)$ is of $O_{\mathbb{P}_{\theta }}(n^{-1/2})$. When $b$ and $p$ are assumed to lie in certain \text{H\"{o}lder}{} balls with exponents $\beta_{b}$ and $\beta_{p}$, it is well-known that wavelet/B-spline basis functions can be chosen to ensure that $\TB_{k}(\theta)$ is of order $k^{- \frac{\beta_{b} + \beta _{p}}{d}}$. Thus under the minimal H\"{o}lder smoothness condition $\beta_{b} + \beta_{p} \geq \frac{d}{2}$ for $\psi (\theta)$ to be $\sqrt{n}$-estimable, $\TB_{k} (\theta)$ is of order $O_{\mathbb{P}_{\theta}} (n^{-1/2})$ if $\frac{k}{n} \rightarrow c$, for some constant $c > 0$.  This implies $\hat{\psi}_{2, k} (\Omega)$ is minimax rate optimal in view of the lower bound proved in \cite{robins2009semiparametric}. We remark that later in our paper, we will take $k = o (n)$ throughout because of an important issue that we discuss next.

Of course in practice $\Omega$, the population expectation of the Gram matrix of $A \bar{z}_{k}(X)$, is not known and must be estimated. \cite{robins2008higher, robins2017minimax} proposed to estimate $\Omega$ by (1) estimating $g$ by $\hat{g}$ under additional smoothness assumptions on $g$, and then (2) estimating $\Omega$ by $\widehat{\Omega}^{\rm ac} \coloneqq \int \bar{z}_{k}(x) \bar{z}_{k}(x)^{\top} \hat{g} (x) \diff x$ using numerical integration with respect to $\hat{g}$. The second order estimation bias $\EB_{2, k} (\theta) \coloneqq \mathbb{E}_{\theta} [\hat{\psi}_{2, k} (\widehat{\Omega}^{\rm ac}) - \hat{\psi}_{2, k} (\Omega)]$ is defined as the bias of the feasible estimator $\hat{\psi}_{2, k} (\widehat{\Omega}^{\rm ac})$ as an estimator of the truncated parameter $\widetilde{\psi}_{2, k} (\theta)$. \citet{robins2008higher} (Theorem 3.17) prove that $\EB_{2, k} (\theta)$ is $O_{\mathbb{P}_{\theta}} (\Vert \hat{p} - p \Vert \cdot \Vert b - \hat{b} \Vert \cdot \Vert g - \hat{g} \Vert)$ while $\EB_{1, k} (\theta)$ is $O_{\mathbb{P}_{\theta}} (\Vert \hat{p} - p \Vert \cdot \Vert b - \hat{b} \Vert)$. Thus the bias of $\hat{\psi}_{2, k} (\widehat{\Omega}^{\rm ac})$ as an estimator of $\widetilde{\psi}_{2, k} (\theta)$ is of third rather than second order. However the total bias of $\hat{\psi}_{2} (\widehat{\Omega}^{\rm ac})$ for $\psi (\theta)$ is $\EB_{2, k} (\theta) + \TB_{k} (\theta)$ which may still be of larger order than $\TB_{k} (\theta)$. The HOIF estimator $\hat{\psi}_{m, k} (\widehat{\Omega}^{\rm ac})$ of order $m$ is an $m$-th order U-statistics with variance $O_{\mathbb{P}_{\theta}} (n^{-1})$ when (roughly) $\frac{k m^{2}}{n} \rightarrow 0$ and the basis vector $\bar{z}_{k}$ satisfies the technical conditions given in Condition~\ref{def_conditions} presented later; it has bias $\EB_{m, k} (\theta)$ for the truncated parameter $\widetilde{\psi}_{2, k} (\theta)$ of order $$O_{\mathbb{P}_{\theta}} (\Vert \hat{p} - p \Vert \cdot \Vert b - \hat{b} \Vert \cdot \Vert g - \hat{g} \Vert^{m - 1}).$$ By choosing $m = m(n)$ sufficiently large, say of order $\sqrt{\log n}$, the estimation bias will be $O_{\mathbb{P}_{\theta}} (n^{-1/2})$ provided that $\Vert g - \hat{g} \Vert^{m - 1} \rightarrow 0$ at a sufficiently fast rate as $m \rightarrow \infty$. 

There are at least three potential difficulties that may arise when estimating $\Omega$ by first estimating $g$: (1) as just noted, $g$ must be sufficiently smooth to ensure $\Vert g - \hat{g} \Vert^{m - 1} \rightarrow 0$; (2) even when the dimension $d$ of $X$ is moderate, estimating a multidimensional density and then numerically integrating over a multidimensional domain is often computationally prohibitively expensive, and (3) the finite sample accuracy of a nonparametric $d$-dimensional density estimator may be poor at the sample sizes often encountered. Fortunately, by eliminating the need to estimate $g$, difficulties (1)--(3) do not arise for our new empirical HOIF estimator. As a consequence, we show both in theory and through simulations that our new estimator can outperform the estimator $\hat{\psi}_{m, k} (\widehat{\Omega}^{\rm ac})$. Theoretically, we will show that the new estimator is semiparametric efficient (resp. $\sqrt{n}$-consistent) when $\beta_{b} + \beta_{p} > \frac{d}{2}$ (resp. $\beta_{b} + \beta_{p} \geq \frac{d}{2}$), which is also necessary \citep{robins2009semiparametric}. It is again worth noting that, despite active research progress in the past decades, no other (simpler) estimators can yet achieve such a tight theoretical guarantee under the \Holder-type smoothness assumptions.

\section{A New Higher Order Influence Function Estimator in a Missing Data Model}
\label{section_missing_data} 

In this section, we study a particular $\Theta$ defined by membership of the functions $b, p, g$ in certain H\"{o}lder smoothness balls and show that the proposed estimator is adaptive and semiparametric efficient in the corresponding model $M(\Theta)$. However, for now, we work with any $\Theta$ satisfying \eqref{eqn:theta_def}.

We are now ready to define both the estimators of \cite{robins2008higher,robins2017minimax} and then the new estimator of this paper.

\subsection{The Estimators}
\label{section_missing_data_estimator} 
Our estimators will depend on a random variable $H_{1}$\footnote{The reason for attaching a subscript `1' in $H_{1}$ will be made clear in Section~\ref{section_general}.} that will vary depending on the functional in the doubly robust class of \cite{robins2008higher} under investigation in Section~\ref{section_general}. $H_{1}$ will not change sign w.p.1. In the MAR example, we have $H_{1} = - A$, which is non-positive w.p.1. We shall consider estimators $\widehat{\psi}_{m, k}$ constructed as follows where the indices $m$ and $k$ are defined below.

\begin{enumerate}
\item[(i)] The sample is randomly split into two parts: an estimation sample of size $n$ and a training sample of size $n_{\rm tr}=N-n$ with $n / N \rightarrow c^*$ and $n \rightarrow \infty$ with $0 < c^* < 1$.

\item[(ii)] Estimators $\widehat{b}, \widehat{p}, \hat{g}$ are constructed from the training sample data. We do not restrict the form of these estimators unless stated otherwise. Let $\widehat{\theta} \coloneqq (\widehat{b}, \widehat{p}, \widehat{g})$.

\item[(iii)] Given a sequence of basis functions $z_{1} (\cdot), z_{2} (\cdot), \ldots$ over $L_2 ([0, 1]^d)$, let $$\bar{z}_{k} (\cdot) \coloneqq \left( z_{1} (\cdot), z_{2} (\cdot), \ldots, z_{k} (\cdot) \right)^{\top}, Z_{j} \coloneqq z_{j} (X) \text{ for $j = 1, \cdots, k$ and } \bar{Z}_{k} \coloneqq \left(Z_{1}, Z_{2}, \ldots, Z_{k} \right)^{\top},$$ and define the following Gram matrices 
\begin{eqnarray*}
\Omega & \coloneqq & \mathbb{E}_{\theta} [|H_{1}| \bar{Z}_{k} \bar{Z}_{k}^{\top}] = \int \bar{z}_{k} (x) \bar{z}_{k} (x)^{\top} g (x) \diff x, \\
\widehat{\Omega}^{\rm ac} & \coloneqq & \mathbb{E}_{\hat{\theta}} [|H_{1}| \bar{Z}_{k} \bar{Z}_{k}^{\top}] = \int \bar{z}_{k} (x) \bar{z}_{k} (x)^{\top} \hat{g} (x) \diff x, \\
\widehat{\Omega}^{\rm emp} & \coloneqq & \frac{1}{n_{\rm tr}} \sum_{i \in \text{training sample}} [|H_{1}| \bar{Z}_{k} \bar{Z}_{k}^{\top}]_{i}.
\end{eqnarray*}

\item[(iv)] Set 
\begin{equation*}
\widehat{\psi}_{1} \coloneqq \widehat{\psi} + \frac{1}{n} \sum_{i = 1}^{n} \IF_{1, i},
\end{equation*}
where $\widehat{\psi}$ and $\IF_{1}$ are $\psi (\theta)$ and $\mathrm{IF}_{1} (\theta)$ with $\widehat{\theta}$ replacing $\theta$. The estimator $\widehat{\psi}_{1}$ is usually referred to as the one-step estimator that adds the estimated first order influence function to the plug-in estimator.

\item[(v)] Let $\varepsilon_{b} = H_1 (B - Y)$, $\varepsilon_{p} = H_1 P + 1$. For $m = 2 , \ldots$, and any generic invertible estimator $\omegahat$ of $\Omega$, define 
\begin{equation*}
\widehat{\psi}_{m, k} (\omegahat) \coloneqq \widehat{\psi}_1 + \sum_{j = 2}^{m} \widehat{\mathbb{IF}}_{j, j, k}(\omegahat),
\end{equation*}
where $\widehat{\mathbb{IF}}_{j, j, k} (\omegahat)$ is the $j$-th order U-statistic and takes the form:
\begin{equation*}
\widehat{\mathbb{IF}}_{j, j, k} (\omegahat) = \frac{(n - j)!}{n!} \sum_{\bar{i}_{j} \in I_{n, j}} \IF_{j, j, k, \bar{i}_{j}} (\omegahat). 
\end{equation*}
Here $\bar{i}_j \coloneqq \{i_1, i_2, \ldots, i_j\}$ and $I_{n, j} \coloneqq \{\bar{i}_{j}: 1 \leq i_{1} \neq i_{2} \neq \cdots \neq i_{j} \leq n\}$ denotes all possible length-$j$ multi-indices with distinct coordinates out of $\{1, \cdots, n\}$, the indices for all subjects in the estimation sample. And for $j = 2$, 
\begin{align*}
\IF_{2, 2, k, \bar{i}_{2}} (\omegahat) & \coloneqq - (-1)^{I \{H_{1, i_1} \leq 0\}}  [\varepsilon_{\widehat{p}} \bar{Z}_{k}^{\top}]_{i_{1}} \widehat{\Omega}^{-1} [\bar{Z}_{k} \varepsilon_{\widehat{b}}]_{i_{2}},
\end{align*}
whereas for $j > 2$
\begin{align*}
\IF_{j, j, k, \bar{i}_{j}} (\omegahat) & \coloneqq (-1)^{j - 1} (-1)^{I \{H_{1, i_1} \leq 0\}} \left\{ 
\begin{array}{c}
\left[ \varepsilon_{\widehat{p}} \bar{Z}_{k}^{\top} \right]_{i_{1}} \widehat{\Omega}^{-1} \times \\ 
\prod\limits_{s = 3}^{j} \left[ \left\{ [|H_{1}| \bar{Z}_{k} \bar{Z}_{k}^{\top}]_{i_{s}} - \widehat{\Omega} \right\} \widehat{\Omega}^{-1} \right] \\ 
\times \left[ \bar{Z}_{k} \varepsilon_{\widehat{b}} \right]_{i_{2}}
\end{array} \right\}.
\end{align*}
In Appendix~\ref{app:higher-order}, we will explain heuristically why adding $\hat{\IIFF}_{j, j, k} (\hat{\Omega})$ for $j \geq 3$ can reduce the estimation bias, echoing the discussion near the end of Section~\ref{section_missing_data_obs}.
\end{enumerate}

Finally we introduce the short-hand notation:
\begin{align}
\psihatac \coloneqq \psihatmk (\omegahat^{\rm ac}), \quad \psihatemp \coloneqq \psihatmk (\omegahat^{\rm emp}),
\end{align}
where, by convention, we set an estimator to be zero in case the associated Gram matrix estimator $\omegahat$ fails to be invertible, either $\omegahat^{\rm ac}$ or $\omegahat^{\rm emp}$. Note that $\widehat{\psi}_{1}$ is the sample average of $A \widehat{P} (Y - \widehat{B}) + \widehat{B}$ and thus does not depend on $\widehat{g}$. In the above construction, sample-splitting necessarily incurs efficiency loss, so eventually we can employ cross-fitting to restore the efficiency as follows. Analogous to $\psihatac$ and $\psihatemp$, we respectively define $\bar{\hat{\psi}}^{\rm ac}_{m, k}$ and $\bar{\hat{\psi}}^{\rm emp}_{m, k}$ but with the roles of the training and estimation samples reversed. Then we define the cross-fit estimators as
\begin{equation}
\hat{\psi}^{\rm ac}_{m, k, \mathrm{cf}} \coloneqq \frac{\bar{\hat{\psi}}^{\rm ac}_{m, k} + \psihatac}{2}, \hat{\psi}^{\rm emp}_{m, k, \mathrm{cf}} \coloneqq \frac{\bar{\hat{\psi}}^{\rm emp}_{m, k} + \psihatemp}{2}.
\end{equation}
The purpose of defining these cross-fit estimators is to restore the information loss due to sample splitting, as in \citet{chernozhukov2018double}. As will be clear in Corollary~\ref{theorem_semipar_eff_emp} later, estimators without cross-fit have variance of order $n^{-1}$ instead of $N^{-1}$, where $N$ is the total sample size.

\begin{remark} \label{rem:abs_cont}\leavevmode
\begin{enumerate}
\item[\emph{(i)}] Note that in contrast to $\psihatac$ and $\hat{\psi}^{\rm ac}_{m, k, \mathrm{cf}}$, $\psihatemp$ and $\hat{\psi}^{\rm emp}_{m, k, \mathrm{cf}}$ completely bypass the need of estimating the (transformed) density function $g$.

\item[\emph{(ii)}] In Section~\ref{section_missing_data_obs}, we define the parameter of interest $\psi (\theta)$ in equation \eqref{param} under the assumption $g$ is absolutely continuous. Though we do not further pursue in this paper, our results below concerning the statistical properties of $\psihatemp$ and $\hat{\psi}^{\rm emp}_{m, k, \mathrm{cf}}$ should hold in most cases when the distribution of $X$ does not even have a density with respect to the Lebesgue measure, in which case we replace $g (x) \diff x$ by $\diff G (x)$ in equation \eqref{param}. Here $G (\cdot)$ denotes the joint probability distribution of $(X, A = 1)$. For example, it is immediate that our results continue to hold when $X$ is discrete with finite support $\{x_{1}, \cdots, x_{M}\}$ for some bounded integer $M$, and for some $c \in (0, 0.5)$, $c < G (x_{m}) < 1 - c$ for all $m = 1, \cdots, M$. This boundedness assumption and the finite-support assumption are needed to ensure that the population Gram matrix $\Omega$ has bounded eigenvalues.

\item[\emph{(iii)}] In the above definitions, $\hat{\Omega}^{-1}$ can be interpreted as the generalized inverse without actually worrying about the invertibility of $\hat{\Omega}$. It will be clear when discussing the statistical properties of the new estimators $\hat{\psi}_{m, k}^{\rm emp}$ and $\hat{\psi}_{m, k, \mathrm{cf}}^{\rm emp}$ later in Section~\ref{section_missing_data_theorem} that $\hat{\Omega}^{\rm emp}$ is invertible with probability converging to 1 as $n \rightarrow \infty$ under the imposed conditions. See the paragraph right after Theorem~\ref{theorem_bias_variance_emp} later for further comments on this issue.
\end{enumerate}
\end{remark}

\subsection{Analysis of the Estimators}
\label{section_missing_data_theorem} 

\cite{robins2008higher, robins2017minimax} analyzed the statistical properties of estimators $\widehat{\psi}_{m, k}^{\rm ac}$ and $\hat{\psi}^{\rm ac}_{m, k, \mathrm{cf}}$, by assuming that there exists $\epsilon > 0$ such that $g$ lies in a \Holder{} ball with smoothness $\beta_{g} > \epsilon$ and the density estimator $\hat{g}$ converges to $g$ in $L_{\infty}$-norm.

In this paper, we shall instead analyze the statistical properties of the estimator $\widehat{\psi}_{m, k}^{\rm emp}$, which has the advantage of not requiring an estimate $\widehat{g}$ of $g$. The statistical properties of $\widehat{\psi}_{m, k, \mathrm{cf}}^{\rm emp}$ will be an immediate corollary. 

First, we rephrase a previous result from \cite{robins2008higher, robins2017minimax}, giving the conditional bias of a generic HOIF estimator $\widehat{\psi}_{m, k}$, with a generic estimator $\widehat{\Omega}$ computed from the training sample. 

\begin{proposition}\label{theorem_bias_general_omega}
For any invertible $\omegahat$ one has conditional on the training sample,
$$\Etheta \left[ \widehat{\psi}_{m, k} - \psi (\theta) \right] = \EB_{m, k} (\theta) + \TB_{k} (\theta),$$
where
\begin{align*}
\EB_{m, k} (\theta) & = (-1)^{(m - 1) + I \{h_{1} (W) \leq 0\}} \left\{ \begin{array}{c}
\E_{\theta} \left[ H_{1} \left( P - \widehat{P} \right) \bar{Z}_{k}^{\top} \right] \Omega^{-1} \left[ \left\{ \Omega - \widehat{\Omega} \right\} \widehat{\Omega}^{-1} \right]^{m - 1} \\ 
\times \E_{\theta} \left[ \bar{Z}_{k} H_{1} \left( B - \widehat{B} \right) \right] 
\end{array} \right\}, \\
\TB_{k} (\theta) & = (-1)^{I \{h_{1} (W) \leq 0\}} \left\{ \begin{array}{c}
\int (b - \hat{b}) (x) (p - \hat{p}) (x) g (x) \diff x \\ 
- \int \int g (x_{1}) g (x_{2}) (b - \hat{b}) (x_{1}) K_{g, k} (x_{1}, x_{2}) (p - \hat{p}) (x_{2}) \diff x_{2} \diff x_{1}
\end{array} \right\} \\
& = (-1)^{I \{h_{1} (W) \leq 0\}} \int \left( \mathsf{I} - \mathsf{\Pi}_{g, \bar{z}_{k}} \right) (b - \hat{b}) (x) \left( \mathsf{I} - \mathsf{\Pi}_{g, \bar{z}_{k}} \right) (p - \hat{p}) (x) g (x) \diff x,
\end{align*}
with $K_{g, k} (x', x) = \bar{z}_{k}^{\top} (x') \Omega^{-1} \bar{z}_{k} (x)$ the orthogonal projection kernel onto $\bar{z}_{k} (x)$ in $L_{2} (g)$, $\mathsf{\Pi}_{g, \bar{z}_{k}} [h] (x) = \int \diff x' g (x') h (x') K_{g, k} (x, x')$ the corresponding orthogonal projection of any function $x \mapsto h (x)$, and $\mathsf{I} [h] (x) \coloneqq h (x)$ denoting the identity operator.		
\end{proposition}

Throughout the sequel of this paper, we impose the following technical condition: 
\begin{customthm}{B}
\label{def_conditions} 
We say that a choice of basis functions $\{z_l, l \geq 1\}$, and tuple of functions $\tilde{\theta} = (\tilde{b}, \tilde{p}, \tilde{g})$ in $\mathbb{R}^{[0, 1]^d}$ satisfies Condition~\ref{def_conditions} if the following hold for some $1 < B < \infty$ and every $n, k \geq 1$ with $\lambda_{\min}(\Omega)$ and $\lambda_{\max} (\Omega)$ being the minimum and maximum eigenvalues of $\Omega$.
\begin{enumerate}
	\item [(1)] The basis functions $\{z_l, l \geq 1\}$ satisfy $\sup_x \bar{z}_{k}^{\top} (x) \bar{z}_{k} (x) \leq B \cdot k$; 
	
	\item [(2)] $\frac{1}{B} \leq \lambda_{\min}(\Omega) \leq \lambda_{\max}(\Omega) \leq B$;
	
	\item [(3)] $\Vert \tilde{b} \Vert_{\infty}, \Vert \tilde{p} \Vert_{\infty} \leq B$.
\end{enumerate}
\end{customthm}

\begin{remark}
Most commonly used basis functions in nonparametric regression, including wavelets, splines, local polynomial partition series, Fourier series, and Legendre polynomials, satisfy Condition~\ref{def_conditions}(1). Condition~\ref{def_conditions}(2) is met under $\calM$ due to the boundedness constraint \eqref{eqn:theta_def}. To see this, we can first choose a set of basis functions $\bar{z}_{k}$ with size $k$ that are orthonormal with respect to the Lebesgue measure on $[0, 1]^{d}$. We allow $k$ to increase with $n$. For these basis functions, we know that $\int \bar{z}_{k} (x) \bar{z}_{k} (x) \mathrm{d} x = \mathrm{I}_{k}$, where $\mathrm{I}_{k}$ denotes the identity matrix of dimension $k$. Obviously, neither the largest nor the smallest eigenvalue of $\mathrm{I}_{k}$ depends on $n$, although $k$ grows with $n$. $\Omega$ simply replaces $\diff x$ by $g (x) \diff x$ in $\mathrm{I}_{k}$. Therefore, under the assumption that $g$ is bounded from above and below by absolute constants, we can conclude that there exist absolute constants sandwiching the largest and smallest eigenvalues of $\Omega$. Condition~\ref{def_conditions}(3) requires that $\tilde{b}$ and $\tilde{p}$ to be bounded by some constant uniformly over $[0, 1]^{d}$, which we believe is a mild condition. When $\tilde{b}, \tilde{p}$ are further assumed to belong to \Holder{} balls, Condition~\ref{def_conditions}(3) is automatically satisfied.
\end{remark} 

Before stating the next result, we introduce some additional notation on different norms of the residuals between the true nuisance parameters and their estimates obtained from the training sample: for $f \in \{b, p, g\}$, $\hat{f}$ the corresponding estimator, and $\varepsilon_{\hat{f}} \in \{\epsb, \epsp\}$, let
\begin{align*}
& \BL_{q, \hat{f}, k} \coloneqq \Vert \sPi_{g, \bar{z}_{k}} [\varepsilon_{\hat{f}}] \Vert_{q}, \BL_{q, \hat{f}} \coloneqq \Vert f - \hat{f} \Vert_{q}, \BL_{\infty, \hat{f}, k} \coloneqq \Vert \sPi_{g, \bar{z}_{k}} [\varepsilon_{\hat{f}}] \Vert_{\infty}, \BL_{\infty, \hat{f}} \coloneqq \Vert f - \hat{f} \Vert_{\infty}, \\
& \text{and also let } \BL_{2, \widehat{\Omega}, k} \coloneqq \Vert \widehat{\Omega} - \Omega \Vert_{\rm op} \text{ where in this paper } \hat{\Omega} \in \{\hat{\Omega}^{\rm ac}, \hat{\Omega}^{\rm emp}\}.
\end{align*}

The next result characterizes the bias and variance bounds of a generic HOIF estimator $\hat{\psi}_{m, k}$ under the above regularity Condition~\ref{def_conditions}. This is the first new theoretical result of this paper. The results below allow one to deduce the rate of convergence of $\hat{\psi}_{m, k}$ even if $\hat{b}, \hat{p}$ are not consistent estimators of $b, p$, respectively. In contrast to \citet{robins2017minimax} or its corrected version \citep{robins2023minimax}, the bias and variance bounds of $\hat{\psi}_{m, k}$ are represented in terms of $\BL_{2, \hat{\Omega}, k}$, the difference between $\hat{\Omega}$ and $\Omega$ in operator norm, instead of $\Vert \hat{g} - g \Vert_{\infty}$. The latter representation cannot be applied to $\hat{\psi}_{m, k}^{\rm emp}$ as it essentially uses the empirical measure to estimate the law of $X | A = 1$.

\begin{proposition}
\label{theorem_bias_variance_ghat}
Assume that $\{z_l, \ l\geq 1\}$ satisfies Condition~\ref{def_conditions}(1) and~\ref{def_conditions}(2) and $(\bhat, \phat)$ satisfy Condition~\ref{def_conditions}(3). Then the following hold:
\begin{enumerate}
\item[\emph{1.}] $\TB_{k} (\theta) = O_{\mathbb{P}_{\theta}} (\Vert \left( \mathsf{I} - \mathsf{\Pi}_{g, \bar{z}_{k}} \right) [b - \widehat{b}] \Vert_{2} \cdot \Vert \left( \mathsf{I} - \mathsf{\Pi}_{g, \bar{z}_{k}} \right) [p - \widehat{p}] \Vert_{2})$;
	
\item[\emph{2.}] There exists a constant $C > 0$ such that $$\EB_{m, k} \left( \theta \right) = O_{\mathbb{P}_{\theta}} (\BL_{2, \hat{b}, k} \cdot \BL_{2, \hat{p}, k} \cdot \{C \cdot \BL_{2, \omegahat, k}\}^{m - 1}) = O_{\mathbb{P}_{\theta}} (\BL_{2, \hat{b}} \cdot \BL_{2, \hat{p}} \cdot \{C \cdot \BL_{2, \omegahat, k}\}^{m - 1});$$

\item[\emph{3.}] Restricted to the event that $\omegahat$ is invertible, the general form of an upper bound of $\var_{\theta} [\widehat{\psi}_{m, k} - \widehat{\psi}_{1}]$ that holds for any $m, k$ is given in \eqref{eq:var_psim} in Section~\ref{app:var_bound_1}. If we further take $m \asymp \log n$ and $k \lesssim \frac{n}{\log^{3} n}$, and if $\BL_{2, \omegahat, k} = o_{\mathbb{P}_{\theta}} (\log^{-1} n)$, $$\var_{\theta} [\widehat{\psi}_{m, k} - \widehat{\psi}_{1}] = O_{\mathbb{P}_{\theta}} \Big( \dfrac{1}{n} \Big\{ \dfrac{k}{n} + \Big( \BL_{2, \hat{b}, k}^{2} + \BL_{2, \hat{p}, k}^{2} \Big) + \min\limits_{(\eta, \zeta) \in [1, \infty]^{2}: 1 / \eta + 1 / \zeta = 1} \BL_{2 \eta, \hat{b}, k}^{2} \BL_{2 \zeta, \hat{p}, k}^{2} \Big\} \Big).$$
\end{enumerate}
\end{proposition}

\begin{remark}
In the variance bound statement (part 3 of the above theorem), $(\eta, \zeta) \in [1, \infty]^{2}: 1 / \eta + 1 / \zeta = 1$ forms a so-called \Holder{} conjugate pair \citep{valiant2017automatic}. To avoid clutter, we simply write $(\eta, \zeta): 1 / \eta + 1 / \zeta = 1$ instead in the sequel.
\end{remark}

The proof of the above proposition can be found in Appendix~\ref{section_proofs}. In part 3, the variance bound is stated under further restrictions on $m, k$ just to simplify the exposition. The particular choice of $m$ and $k$ is sufficient for $k m^{2} / n = o (1)$, such that $\var_{\theta} [\hat{\psi}_{m, k} - \hat{\psi}_{1}]$ can be bounded by the claimed order in the theorem; see \eqref{ignore} in the proof of Corollary~\ref{lem:var_psim_emp} in Appendix~\ref{section_proofs}.

Let $\EB_{m, k}^{\rm emp} \left( \theta \right)$ be the corresponding estimation bias of $\hat{\psi}_{m, k}^{\rm emp}$. The truncation bias $\TB_{k} (\theta)$ as defined is independent of how $\Omega$ is estimated. When specialized to the newly proposed empirical HOIF estimator $\hat{\psi}_{m, k}^{\rm emp}$ with $\Omega$ estimated by the sample Gram matrix $\hat{\Omega}^{\rm emp}$, Proposition~\ref{theorem_bias_variance_ghat} implies the first set of results on the new estimator $\hat{\psi}_{m, k}^{\rm emp}$, the main theme of the paper.

\begin{theorem}\label{theorem_bias_variance_emp}
Under the same assumptions of Proposition~\ref{theorem_bias_variance_ghat}, the following hold
\begin{enumerate}
\item[\emph{1.}] The same conclusion in Proposition~\ref{theorem_bias_variance_ghat}.1 holds for $\TB_{k} (\theta)$;

\item[\emph{2.}] There exists a constant $C > 0$ such that $$\EB_{m, k}^{\rm emp} \left( \theta \right) = O_{\mathbb{P}_{\theta}} (\BL_{2, \hat{b}, k} \cdot \BL_{2, \hat{p}, k} \cdot \{C \cdot \BL_{2, \omegahat^{\rm emp}, k}\}^{m - 1}) = O_{\mathbb{P}_{\theta}} (\BL_{2, \hat{b}} \cdot \BL_{2, \hat{p}} \cdot \{C \cdot \BL_{2, \omegahat^{\rm emp}, k}\}^{m - 1});$$
		
\item[\emph{3.}] When $m \asymp \log n$ and $k \lesssim \frac{n}{\log^{3} n}$, conditional on the training sample restricted to the event that $\omegahat^{\rm emp}$ is invertible, 
\begin{equation}
\label{variance bound}
\var_{\theta} [\widehat{\psi}_{m, k}^{\rm emp} - \widehat{\psi}_{1}] = O_{\mathbb{P}_{\theta}} \Big( \dfrac{1}{n} \Big\{ \dfrac{k}{n} + \Big( \BL_{2, \hat{b}, k}^{2} + \BL_{2, \hat{p}, k}^{2} \Big) + \min\limits_{(\eta, \zeta): 1 / \eta + 1 / \zeta = 1} \BL_{2 \eta, \hat{b}, k}^{2} \BL_{2 \zeta, \hat{p}, k}^{2} \Big\} \Big).
\end{equation}
\end{enumerate}	
\end{theorem} 

Theorem~\ref{theorem_bias_variance_emp} is almost a carbon copy of Proposition~\ref{theorem_bias_variance_ghat}. But we again emphasize that Theorem~\ref{theorem_bias_variance_emp} is about the concrete new estimator $\hat{\psi}_{m, k}^{\rm emp}$ proposed in this paper. In part 3, unlike Proposition~\ref{theorem_bias_variance_ghat}, there is no extra condition on the order of $\BL_{2, \hat{\Omega}^{\rm emp}, k}$. This is because for $\hat{\Omega}^{\rm emp}$, by Lemma~\ref{lemma_spectral_bound_rudelson} in Appendix~\ref{section_lemmas}, $\BL_{2, \hat{\Omega}^{\rm emp}, k} = o_{\mathbb{P}_{\theta}} (\log^{-1} n)$ is automatic under the choice of $k$. $\hat{\Omega}^{\rm emp}$ is also invertible with probability converging to 1 by Lemma~\ref{lemma_spectral_bound_rudelson} so there is also no need to worry about the invertibility of $\hat{\Omega}^{\rm emp}$ in our asymptotic statement. Importantly, Theorem~\ref{theorem_bias_variance_emp} enables us to characterize the conditions under which $\hat{\psi}_{m, k}^{\rm emp}$ and $\hat{\psi}_{m, k, \mathrm{cf}}^{\rm emp}$ are $\sqrt{n}$-consistent and $\hat{\psi}_{m, k, \mathrm{cf}}^{\rm emp}$ reaches the semiparametric efficiency bound, leading to another main result of this paper.

\begin{corollary}\label{theorem_semipar_eff_emp}
Under the assumptions of Theorem~\ref{theorem_bias_variance_emp}, if we further assume
\begin{enumerate}			
\item [\emph{(1)}] $m \asymp \log n$ and $k \asymp \frac{n}{\log^{3} n}$; \emph{(2)} $\TB_{k} (\theta) = o_{\mathbb{P}_{\theta}} (n^{-1/2})$;

\item [\emph{(3)}] $\BL_{2, \hat{b}, k}$ and $\BL_{2, \hat{p}, k}$ are $o_{\mathbb{P}_{\theta}} (1)$; and \emph{(4)} $\min\limits_{(\eta, \zeta): 1 / \eta + 1 / \zeta = 1} \BL_{2 \eta, \hat{b}, k}^{2} \BL_{2 \zeta, \hat{p}, k}^{2}$ is $o_{\mathbb{P}_{\theta}} (1)$.
\end{enumerate}
Then
\begin{align*}
\sqrt{n} \left( \widehat{\psi}_{m, k}^{\rm emp} - \psi (\theta) \right) = \frac{1}{\sqrt{n}} \sum_{i = 1}^{n} \mathrm{IF}_{1, i} (\theta) + o_{\mathbb{P}_{\theta}} (1) \text{ and } \sqrt{N} \left( \widehat{\psi}_{m, k, \mathrm{cf}}^{\rm emp} - \psi (\theta) \right) = \frac{1}{\sqrt{N}} \sum_{i = 1}^{N} \mathrm{IF}_{1, i} (\theta) + o_{\mathbb{P}_{\theta}} (1).
\end{align*}
Thus $\widehat{\psi}_{m, k, \mathrm{cf}}^{\rm emp}$ is a semiparametric efficient estimator of $\psi (\theta)$. If \emph{(3)} and \emph{(4)} are replaced by
\begin{enumerate}
\item [\emph{(3')}] $\BL_{2, \hat{b}, k}$ and $\BL_{2, \hat{p}, k}$ are $O_{\mathbb{P}_{\theta}} (1)$; and \emph{(4')} $\min\limits_{(\eta, \zeta): 1 / \eta + 1 / \zeta = 1} \BL_{2 \eta, \hat{b}, k}^{2} \BL_{2 \zeta, \hat{p}, k}^{2}$ is $O_{\mathbb{P}_{\theta}} (1)$,
\end{enumerate}
then
\begin{align*}
\sqrt{n} \left( \widehat{\psi}_{m, k}^{\rm emp} - \psi (\theta) \right) = \frac{1}{\sqrt{n}} \sum_{i = 1}^{n} \mathrm{IF}_{1, i} (\theta) + O_{\mathbb{P}_{\theta}} (1) \text{ and } \sqrt{N} \left( \widehat{\psi}_{m, k, \mathrm{cf}}^{\rm emp} - \psi (\theta) \right) = \frac{1}{\sqrt{N}} \sum_{i = 1}^{N} \mathrm{IF}_{1, i} (\theta) + O_{\mathbb{P}_{\theta}} (1).
\end{align*}
Thus $\widehat{\psi}_{m, k, \mathrm{cf}}^{\rm emp}$ is a $\sqrt{n}$- and $\sqrt{N}$-consistent yet not necessarily semiparametric efficient estimator of $\psi (\theta)$.
\end{corollary}

To ease exposition in the remainder of this paper, we let $\hat{\psi}_{n}^{\rm emp} \coloneqq \widehat{\psi}_{m, k, \mathrm{cf}}^{\rm emp}, \hat{\psi}_{n}^{\rm ac} \coloneqq \widehat{\psi}_{m, k, \mathrm{cf}}^{\rm ac}$ and $\hat{\psi}_{N, \mathrm{cf}}^{\rm emp} \coloneqq \widehat{\psi}_{m, k, \mathrm{cf}}^{\rm emp}, \hat{\psi}_{N, \mathrm{cf}}^{\rm ac} \coloneqq \widehat{\psi}_{m, k, \mathrm{cf}}^{\rm ac}$, when $m$ and $k$ are set according to the choice given in Corollary~\ref{theorem_semipar_eff_emp}. The proof of Corollary~\ref{theorem_semipar_eff_emp} is a direct consequence of Theorem~\ref{theorem_bias_variance_emp} by checking if the chosen $m, k$ in part 2 of Theorem~\ref{theorem_bias_variance_emp} ensures $\EB_{m, k} (\theta) = o (n^{- 1 / 2})$. To this end, by applying Lemma~\ref{lemma_spectral_bound_rudelson} in Appendix~\ref{section_lemmas}, we have
\begin{equation}
\label{matching}
\EB_{m, k} (\theta) = O_{\mathbb{P}_{\theta}} \Big\{ \Big( \sqrt{\frac{k \log k}{n}} \Big)^{m - 1} \Big\} = O_{\mathbb{P}_{\theta}} \Big\{ \Big( \log n \Big)^{- \log n} \Big\} = o_{\mathbb{P}_{\theta}} (n^{-1 / 2}).
\end{equation}
Of course, if $\hat{b}$ and/or $\hat{p}$ enjoy faster convergence rates to $b$ and/or $p$, smaller $k$ and $m$ can be deduced. Corollary~\ref{theorem_semipar_eff_emp} paves the way for proving that $\hat{\psi}_{n}^{\rm emp}$ is semiparametric efficient under the minimal \Holder{} smoothness conditions characterized in \citet{robins2009semiparametric} in the next section, closing an important theoretical gap in the literature.

We now further comment on conditions (4) and (4') in Corollary~\ref{theorem_semipar_eff_emp}. Recall that $\BL_{2 \eta, \hat{b}, k}$ (resp. $\BL_{2 \zeta, \hat{p}, k}$) is the $L_{2 \eta} (\mathbb{P}_{\theta})$-norm (resp. $L_{2 \zeta} (\mathbb{P}_{\theta})$-norm) of the $L_{2} (\mathbb{P}_{\theta})$-projection of $b - \hat{b}$ (resp. $\hat{p} - p$). We mainly consider the following corner cases: $(\eta, \zeta) = (1, \infty)$ or $(\infty, 1)$. When $\eta = 1$, we immediately conclude that $\BL_{2, \hat{b}, k} \leq \BL_{2, \hat{b}}$ by the $L_{2} (\mathbb{P}_{\theta})$-norm contraction property of $L_{2} (\mathbb{P}_{\theta})$-projection; if furthermore $p - \hat{p}$ bounded almost surely implies $\BL_{2 \zeta, \hat{p}, k} = O_{\mathbb{P}_{\theta}} (1)$ for $\zeta = \infty$ (by convention $2 \zeta = \infty$ as well), conditions (4) and (4') hold immediately in the above theorem under conditions (3) and (3'). Similar arguments apply to the case with values of $\eta, \zeta$ reversed. Nevertheless, both upper bound strategies entail an extra $L_{\infty} (\mathbb{P}_{\theta})$-norm stability condition on the basis functions $\bar{z}_{k}$, or more generally, the following $L_{q} (\mathbb{P}_{\theta})$-norm stability condition:
\begin{customthm}{S}
\label{stable}
For any bounded $h \in L_{2} (\mathbb{P}_{\theta})$ and any $k \times k$ matrix $\Sigma$ with operator norm bounded by $M$, there exists a constant $C < \infty$ depending on $\bar{z}_{k}$ and $f_{X}$ such that
\begin{equation}\label{cond:b}
\Big\Vert \bar{z}_{k} (\cdot)^{\top} \Sigma \Etheta \left[ \bar{z}_{k} (X) h(X) \right] \Big\Vert_{q} < C M \Vert h \Vert_{q},
\end{equation}
where $\Vert f \Vert_{q}$ denotes the $L_{q} (\mathbb{P}_{\theta})$-norm of a function $f$ for some $q \in (2, \infty]$.
\end{customthm}

Fortunately, the set of basis functions that satisfy Condition~\ref{stable} is not vacuous. In Appendix~\ref{section_lemmas}, we show that Condition~\ref{stable} is met with $q = \infty$ (and hence any smaller $q > 2$) when we use Daubechies wavelets, B-splines, or local polynomial partition series to approximate $h$ (see Lemma~\ref{lem:supnorm} for details), building on results in \citet{belloni2015some}; also see \citet{huang2003local, chen2015optimal, cattaneo2020large}, or \citet{chen2018optimal}. When Condition~\ref{stable} holds for $q = \infty$, both $\BL_{\infty, \hat{b}, k}^{2}$ and $\BL_{\infty, \hat{p}, k}^{2}$ are $O_{\mathbb{P}_{\theta}} (1)$ because both $\hat{b} - b$ and $\hat{p} - p$ are $O_{\mathbb{P}_{\theta}} (1)$ under Condition~\ref{def_conditions}(3). Thus conditions (4) and (4') in Corollary~\ref{theorem_semipar_eff_emp} hold respectively under conditions (3) and (3'), as argued before Condition~\ref{stable}.

\begin{remark}\leavevmode
\label{rem:basis}
For basis functions $\bar{z}_{k}$ that satisfy \eqref{cond:b}, if $\BL_{\infty, \hat{b}} = \Vert b - \hat{b} \Vert_{\infty}$ and $\BL_{\infty, \hat{p}} = \Vert p - \hat{p} \Vert_{\infty}$ are $O_{\mathbb{P}_{\theta}} (1)$ (or $o_{\mathbb{P}_{\theta}} (1)$), then $\BL_{2, \hat{b}, k}$, $\BL_{2, \hat{p}, k}$, $\BL_{\infty, \hat{b}, k}$, $\BL_{\infty, \hat{p}, k}$ are also at most $O_{\mathbb{P}_{\theta}} (1)$ (or $o_{\mathbb{P}_{\theta}} (1)$). For basis functions that may violate \eqref{cond:b}, such as Fourier series or monomial transformations of $X$, the above statement may be false and the analysis needs to be done case by case.
\end{remark}

Before proceeding, we first compare and contrast the empirical HOIF estimator $\hat{\psi}_{n}^{\rm emp}$ or $\hat{\psi}_{N, \mathrm{cf}}^{\rm emp}$ proposed in this paper with $\hat{\psi}_{n}^{\rm ac}$ or $\hat{\psi}_{N, \mathrm{cf}}^{\rm ac}$ originally considered in \citet{robins2017minimax}. As we have seen, the new estimators proposed here differ from the original ones in how $\Omega$ is estimated. For $\hat{\psi}_{n}^{\rm ac}$ or $\widehat{\psi}_{N, \mathrm{cf}}^{\rm ac}$, we can obtain similar theoretical results as in Theorem~\ref{theorem_bias_variance_emp} and Corollary~\ref{theorem_semipar_eff_emp} by appealing to Proposition~\ref{theorem_bias_variance_ghat}, with $\hat{\Omega}$ replaced by $\hat{\Omega}^{\rm ac}$. However, to ensure $\BL_{2, \omegahat^{\rm ac}, k} = o_{\mathbb{P}_{\theta}} (\log^{-1} n)$, we need $\Vert 1 - \hat{g} / g \Vert_{\infty} = o_{\mathbb{P}_{\theta}} (\log^{-1} n)$, a consequence of Lemma~\ref{lemma_op_ac}. A common high-level condition for the latter to hold is that there exists an estimator $\hat{g}$ such that $\Vert 1 - \hat{g} / g \Vert \lesssim n^{- \delta}$ for some $\delta > 0$, which will hold if we assume $g$ is \Holder{} smooth with some positive smoothness index. $\hat{\psi}_{N, \mathrm{cf}}^{\rm ac}$ will be semiparametric efficient under the parallel assumptions as those in Corollary~\ref{theorem_semipar_eff_emp}. This is the route taken in \citet{robins2017minimax}. If $\delta$ is known or the smoothness of $g$ is given, it is surely possible to set $m$ to a value accordingly, which is possibly smaller than $\log n$. However, the new estimators $\hat{\psi}_{n}^{\rm emp}$ or $\hat{\psi}_{N, \mathrm{cf}}^{\rm emp}$ allow $\delta = 0$.

\subsection{Adaptive Efficient Estimation}
\label{section_missing_data_adapt} 

In this section we show that we can use the new empirical HOIF estimators to obtain adaptive semiparametric efficient estimators when $\Theta$ assumes that the functions $b, p$ live in H\"{o}lder balls with sufficient smoothness. Following \cite{robins2008higher,robins2017minimax}, we define the complexity of the model $\mathcal{M} \left( \Theta \right)$ in terms of H\"{o}lder smoothness classes as follows.

\begin{definition}\label{def_Holder}
	A function $x \mapsto h(x)$ with domain a compact subset $D$ of $\mathbb{R}^{d}$ is said to belong to a H\"{o}lder ball $H(\beta, C)$ with H\"{o}lder exponent $\beta > 0$ and radius $C > 0$, if and only if $h$ is uniformly bounded by $C$, all partial derivatives of $h$ up to order $\lfloor \beta \rfloor$ exist and are bounded, and all partial derivatives $\nabla^{\lfloor \beta \rfloor} h$ of order $\lfloor \beta \rfloor$ satisfy 
	\[
	\sup_{x, x + \delta x \in D} \left\vert \nabla^{\lfloor \beta \rfloor} h(x + \delta x) - \nabla^{\lfloor \beta \rfloor} h(x) \right\vert \leq C \Vert \delta x \Vert^{\beta - \lfloor \beta \rfloor}.
	\]
\end{definition} 

To construct adaptive semiparametric efficient estimators over \Holder{} balls we use specific bases that satisfy Conditions~\ref{def_conditions}(1),~\ref{def_conditions}(2), and Condition~\ref{stable} with $q = \infty$ and that additionally give optimal rates of approximation for \Holder{} classes. In particular, we shall assume our basis $\{z_{l}, l = 1, 2, \cdots\}$ has optimal approximation properties in $L_{2} (\mu)$ for \Holder{} balls $H (\beta, C)$ with respect to the Lebesgue measure ($\mu$) i.e., 
\begin{equation}
\sup_{h \in H (\beta, C)} \inf_{\varsigma_{l}} \int_{x \in [0, 1]^{d}} \Big( h (x) - \sum_{l = 1}^{k} \varsigma_{l} z_{l} (x) \Big)^{2} \diff x = O (k^{- 2 \beta / d}). \label{eqn:basis_approx}
\end{equation}
where given any $\{z_l, l \geq 1\}$ satisfying \eqref{eqn:basis_approx} the $O$-notation only depends on the H\"{o}lder radius $C$. For example:
\begin{enumerate}
\item[(i)] The basis consisting of $d$-fold tensor products of B-splines of order $s$ satisfies \eqref{eqn:basis_approx} for all $0 < \beta < s + 1$ \citep{newey1997convergence,belloni2015some};
\item[(ii)] The basis consisting of $d$-fold tensor products of a univariate Daubechies wavelet basis $\varphi (u)$ satisfying 
\begin{equation*}
\int_{[0, 1]} u^{m} \varphi (u) \diff u = 0, \ m = 0, 1, \ldots, M 
\end{equation*}
also satisfies \eqref{eqn:basis_approx} for $\beta < M + 1$ \citep{gine2016mathematical}. 
\end{enumerate}
In addition, both of these bases satisfy Conditions~\ref{def_conditions}(1) and~\ref{def_conditions}(2) for some large but fixed $1 < B < \infty$ \citep{belloni2015some, newey1997convergence} and Condition~\ref{stable} with $q = \infty$ (see, e.g., the comments after Condition~\ref{stable}). 


Then aided by Corollary~\ref{theorem_semipar_eff_emp}, together with the above optimally approximating basis functions for \Holder{} smoothness classes, we immediately have the following result:

\begin{theorem}\label{lemma_holder_bias} 
Assume the following:
\begin{enumerate}
\item[\emph{(1)}] The conditions (1), (3) and (4) of Corollary~\ref{theorem_semipar_eff_emp} hold and $\{z_l, l \geq 1\}$ satisfy Condition~\ref{def_conditions}(1), Condition~\ref{def_conditions}(2), Condition~\ref{stable} with $q = \infty$, and \eqref{eqn:basis_approx}.

\item[\emph{(2)}]  $b$ and $\hat{b}$ lie in $H(\beta_{b}, C_{b})$, and $p$ and $\hat{p}$ lie in $H(\beta_{p}, C_{p})$ with $C_p > \frac{1}{\sigma}$, where recall that $\sigma$ is the lower bound of $g$.

\item [\emph{(3)}] $\beta = \frac{\beta_{b} + \beta _{p}}{2}$ satisfies $\frac{d}{4} < \beta < \bmax$ for some known $\bmax$. 
\end{enumerate}

Then the estimators $\widehat{\psi}_{n}^{\rm emp}$ and $\widehat{\psi}_{N, \mathrm{cf}}^{\rm emp}$ satisfy $$\TB_{k} (\theta) = O_{\mathbb{P}_{\theta}} (k^{-\frac{2\beta}{d}}) = o_{\mathbb{P}_{\theta}} (n^{-1/2}),$$ and thus $\widehat{\psi}_{N, \mathrm{cf}}^{\rm emp}$ reaches the semiparametric efficiency bound adaptively (i.e. knowing neither $\beta_{b}$ nor $\beta_{p}$ as long as condition (3) is met). If conditions (3') and (4') of Corollary~\ref{theorem_semipar_eff_emp} hold instead of (3) and (4), then both $\widehat{\psi}_{n}^{\rm emp}$ and $\widehat{\psi}_{N, \mathrm{cf}}^{\rm emp}$ are adaptive $\sqrt{n}$-consistent estimators of $\psi (\theta)$.
\end{theorem}

The proof of Theorem~\ref{lemma_holder_bias} is straightforward. Since $\beta > \frac{d}{4}$, by condition (2) of Theorem~\ref{lemma_holder_bias}, we have $\TB_{k} (\theta) = O_{\P_{\theta}} (k^{- \frac{2 \beta}{d}})$. Choosing $k \asymp \frac{n}{\log^{3} n}$, it is easy to see that $\TB_{k} (\theta) = o_{\P_{\theta}} (n^{- 1 / 2})$. By applying Corollary~\ref{theorem_semipar_eff_emp}, we immediately obtain the desired results.  By presenting Corollary~\ref{theorem_semipar_eff_emp} separately from Theorem~\ref{lemma_holder_bias}, we hope that it has been made clear that \Holder{} smoothness assumptions mainly contribute to ensure that the truncation bias $\TB_{k} (\theta)$ is sufficiently small. As a result, various other types of smoothness conditions, such as Sobolev classes with $\beta$-th order weak derivatives bounded in the $L_{2} (\mathbb{P})$-norm, often denoted by $W^{\beta, 2}$, can be treated similarly under our framework. 

Theorem~\ref{lemma_holder_bias} tells us that $\widehat{\psi}_{N, \mathrm{cf}}^{\rm emp}$ is semiparametric efficient at any $\mathbb{P}_{\theta}$ that satisfies conditions of the theorem. Moreover, this result is adaptive over any $\beta\in (\frac{d}{4}, \bmax)$. Interestingly, the knowledge of an upper bound $\bmax$ only becomes crucial in constructing a sequence of basis functions $\{z_l, \ l \geq 1\}$ satisfying \eqref{eqn:basis_approx} and is not required anywhere else in the analysis. As mentioned in the end of the previous section, analogous results for $\hat{\psi}_{N, \mathrm{cf}}^{\rm ac}$ can be attained with additional smoothness conditions on $g$ and $\ghat$ [also see, e.g., \citet[Theorem 8.2]{robins2017minimax} (with the proof corrected in \citet{robins2023minimax})]. But as we have stressed throughout this paper, the result in Theorem~\ref{lemma_holder_bias} is completely oblivious to (1) the smoothness conditions on $g$ including absolute continuity and (2) the need of constructing an estimator $\ghat$ of $g$.

The statistical message of Theorem~\ref{lemma_holder_bias} is somewhat surprising. When $b$ and $p$ satisfy (2) in Theorem~\ref{lemma_holder_bias}, the following estimators $\bhat, \phat$ will do so as well \citep{vaart2006oracle} when the basis $\{z_l, l \geq 1\}$ are compactly supported Daubechies wavelets of sufficient regularity (at least $2 \bmax$): $\widehat{b} (x) = \sum_{l = 1}^{k_{b}} \widehat{\eta}_{l} z_{l} (x)$ and $\widehat{p} (x) = 1 / \widehat{\pi} (x)$ with $\widehat{\pi} (x) = \sum_{l = 1}^{k_{\pi}} \widehat{\alpha}_{l} z_{l} (x)$ with parameters estimated by least squares and $k_{b}$ and $k_{\pi}$ chosen by cross validation, all done in the training sample. Note, however, the choices $\widehat{b} (x) = 0$ and $1 / \widehat{p} (x) = c$ for $c \in (0, 1)$ still satisfy the conditions in the second part of Theorem~\ref{lemma_holder_bias}, as $\Vert \widehat{b} - b \Vert_{\infty}$ and $\Vert \widehat{p} - p \Vert_{\infty}$ are both $O_{\mathbb{P}_{\theta}} (1)$. Thus following Remark~\ref{rem:basis}, we obtain the surprising conclusion that our estimators $\bhat$ and $\phat$ do not even need to be consistent for $b$ and $p$ to obtain a $\sqrt{n}$-consistent estimator $\widehat{\psi}_{N, \mathrm{cf}}^{\rm emp}$ of $\psi (\theta)$, as long as $\beta > \frac{d}{4}$. This is an asymptotic ``free-lunch''. In fact, we can even ignore the range of $\widehat{p}$, choose $\widehat{b} = \widehat{p} = 0$ and still preserve $\sqrt{n}$-consistency. The explanation of this fact is that $\hat{\psi}_{N, \mathrm{cf}}^{\rm emp}$ is ``multiply robust'' under the \Holder{} condition in the following sense: Even when we choose $\widehat{b} = \widehat{p} = 0$, and hence $\widehat{\psi}, \IF_{1}$, and $\widehat{\psi}_{1}$ are all identically zero, nonetheless $\sum_{j = 2}^{m(n)} \widehat{\mathbb{IF}}_{j, j, k(n)}(\omegahat^{\rm emp})$ is an estimate of $\int \mathsf{\Pi}_{g, \bar{z}_{k}} [b] (x) \cdot \mathsf{\Pi}_{g, \bar{z}_{k}} [p] (x) \cdot g (x) \diff x$ with bias $o_{\mathbb{P}_{\theta}} (n^{-1 / 2})$ for $\omegahat^{\rm emp}$ as in \eqref{matching}.

Finally, the following two remarks discuss some further implications and observations regarding Theorem~\ref{lemma_holder_bias}.

\begin{remark}
Suppose model $\calM (\Theta)$ restricts $b$ and $p$ to lie in pre-specified \Holder{} balls $H(\beta_{b}, C_{b})$ and $H(\beta_{p}, C_{p})$. \cite{robins2008higher} show that the minimax rate for estimating $\psi (\theta)$ when $g$ is known is $n^{-1/2} + n^{-\frac{4 \beta}{4 \beta + d}}$, with $\beta = \frac{\beta_{b} + \beta_{p}}{2}$. Hence when $\beta < \frac{d}{4}$, the minimax rate is slower than $n^{-1/2}$ regardless of whether $g$ is known or unknown in the model $\calM \left( \Theta \right)$. However, even in such a model there exist parameters, $\theta^{\ast} = \left( b^{\ast}, p^{\ast}, g^{\ast} \right) \in \Theta$ in which $b^{\ast}$ and $p^{\ast}$ happen to lie in smaller \Holder{} balls $H(\beta_{b}^{\ast}, C_{b}^{\ast})$ and $H(\beta_{p}^{\ast}, C_{p}^{\ast})$ with $\frac{\beta_{b}^{\ast} + \beta_{p}^{\ast}}{2} > \frac{d}{4}$. Thus $\widehat{\psi}_{N, \mathrm{cf}}^{\rm emp}$ will be semiparametric efficient at $\theta^{\ast}$ under the assumptions in Theorem~\ref{lemma_holder_bias}, even though it will converge to $\psi (\theta)$ at a rate slower than $n^{-1/2}$ at nearly all $\theta \in \Theta$.
\end{remark}

\begin{remark}
\label{rem:smooth_hat}
Note even when $b$ and $p$ lie in \Holder{} balls $H (\beta_{b}, C_{b})$ and $H(\beta_{p}, C_{p})$ with $\beta > \frac{d}{4}$, we still need their estimates $\widehat{b}$ and $\widehat{p}$ to lie in these \Holder{} balls with probability approaching one to ensure $\TB_{k} (\theta) = o_{\mathbb{P}_{\theta}} (n^{-1/2})$; see condition (2) of Theorem~\ref{lemma_holder_bias}. This may place restrictions on the machine learning algorithms we can use to estimate $b$ and $p$. As an example, suppose (i) we use multiple nonparametric or machine learning algorithms to construct candidate estimators and then use cross validation or aggregation to build a data-adaptive candidate and (ii) the aforementioned series estimators $\widehat{b} (x) = \sum_{l=1}^{k_{b}}\widehat{\eta}_{l} z_{l} (x)$ and $\widehat{p} (x) = 1 / \widehat{\pi} (x)$ with $\widehat{p} (x) = \sum_{l = 1}^{k_{p}} \widehat{\alpha}_{l} z_{l} (x)$ are included among the candidates. If the only candidates were these series estimators, we know that $\TB_{k} (\theta) = o_{\mathbb{P}_{\theta}} (n^{-1/2})$ for $k \asymp \frac{n}{\log^{3} n}$ and our estimator would be semiparametric efficient. Nonetheless it may be the case at the particular law $\theta^{\ast} = \left( b^{\ast}, p^{\ast}, g^{\ast} \right)$ that generated the data, another pair of candidates $\widetilde{b}$ and $\widetilde{p}$ are chosen with high probability over these series estimators $\widehat{b}$ and $\widehat{p}$ because for these laws, $\widetilde{b}$ and $\widetilde{p}$ converge to $b$ and $p$ at faster rates than the series estimators. However, faster rates of convergence do not imply that the associated truncation bias $\TB_{k} (\theta) = \int \diff x g(x) (\mathsf{I}-\mathsf{\Pi}_{g, \bar{z}_{k}}) [b-\widetilde{b}] (x) (\mathsf{I} - \mathsf{\Pi}_{g, \bar{z}_{k}}) [p - \widetilde{p}] (x)$ is less than the truncation bias of the series estimator and thus no guarantee it is $o_{\mathbb{P}_{\theta}} (n^{-1/2})$. Fortunately, based on the results in Corollary~\ref{theorem_semipar_eff_emp} or Theorem~\ref{lemma_holder_bias}, we only need data-adaptive consistent estimators of $b$ and $p$ without any requirement on their convergence rates for semiparametric efficiency. Such weaker requirement makes it much easier to find data-adaptive estimators of $b$ and $p$ that belong to certain H\"{o}lder balls. We provide a simple example in Appendix~\ref{app:adapt}.

\end{remark}

\section{Extensions to Doubly Robust Functionals}
\label{section_general} 
In this section we extend our results to incorporate a general class of doubly robust functionals studied in \cite{robins2008higher}. We consider $N$ i.i.d observations $W= (X, V)$ from a law $\mathbb{P}_{\theta}$ with $\theta \in \Theta$ and wish to make inference on a functional $\mathcal{\chi} (\mathbb{P}_{\theta}) = \psi (\theta)$. In this section, we further assume the following.
\begin{customthm}{DR}\leavevmode
\label{cond:DR}
\begin{itemize}
\item[(1)] For all $\theta \in \Theta$, the distribution of $X$ is supported on a compact set in $\mathbb{R}^{d}$ which we take to be $[0, 1]^{d}$ and has a density $f (x)$ with respect to Lebesgue measure.

\item[(2)] The parameter $\theta$ contains components $b = b (\cdot)$ and $p = p (\cdot)$, $b: [0, 1]^{d} \rightarrow \mathbb{R}$ and $p: [0, 1]^{d} \rightarrow \mathbb{R}$ such that the functional $\psi (\theta)$ of interest has a first order influence function $\mathbb{IF}_{1, \psi} (\theta) = N^{-1} \sum_{i} \mathrm{IF}_{1, \psi, i} (\theta)$ over $N$ i.i.d. observations, where 
\begin{align}
\mathrm{IF}_{1, \psi} (\theta) & = H (b, p) - \psi (\theta), \text{ with } \label{Hdef} \\
H (b, p) & \coloneqq b (X) p (X) h_{1} (W) + b (X) h_{2} (W) + p (X) h_{3} (W) + h_{4} (W) \notag \\
& = B P H_{1} + B H_{2} + P H_{3} + H_{4}, \notag
\end{align}
and the known functions $h_{1} (\cdot), h_{2} (\cdot), h_{3} (\cdot), h_{4} (\cdot)$ do not depend on $\theta$. Furthermore, $h_{1} (\cdot)$ is either nowhere negative or nowhere positive almost surely.

\item[(3)] $\theta = (b, p, g) \in \Theta$ where $\Theta = \Theta_{b} \times \Theta_{p} \times \Theta_{g}$ with $g (x) = \mathbb{E}_\theta [| H_{1} | \vert X = x] f (x)$ bounded away from zero and infinity and absolutely continuous w.r.t. to Lebesgue measure on the support of $X$.

\item[(4)] The model $\mathcal{M} (\Theta)$ for $\mathbb{P}_{\theta}$ satisfies \eqref{eqn:theta_def} and is locally nonparametric in the sense that the tangent space at each $\mathbb{P}_{\theta} \in \mathcal{M} (\Theta)$ is all of $L_{2} (\mathbb{P}_{\theta})$.
\end{itemize}
\end{customthm}

The missing data example considered throughout this paper is the special case with $H_{1} = -A, H_{2} = 1, H_{3} = A Y, H_{4} = 0$, $p (X) = 1 / \mathbb{P}_\theta \left( A = 1 | X \right), b (X) = \mathbb{E}_\theta [Y | A = 1, X], g (X) = \mathbb{E}_\theta [A | X] f (X)$. $H (b, p)$ is doubly robust for $\psi (\theta)$ in the following sense: 
\begin{equation*}
\mathbb{E}_\theta \left[ H (b, p) \right] = \mathbb{E}_\theta \left[ H (b, p^{\ast}) \right] = \mathbb{E}_\theta \left[ H (b^{\ast}, p) \right] = \psi (\theta)
\end{equation*}
for any $\theta \in \Theta$ and functions $b^{\ast} (x)$ and $p^{\ast} (x)$. Specifically \citet{robins2008higher} prove the following result: 
\begin{lemma}[Double-Robustness (Theorem 3.2 of \citet{robins2008higher})]
\label{thm:dr}
Suppose that Condition~\ref{cond:DR} holds. Then
\begin{align*}
& \psi (\theta) = \E_{\theta} [H_{4}] - \E_\theta [BPH_{1}] = \E_{\theta} [H_{4}] - (-1)^{I \{h_{1} (W) \leq 0\}} \int b (x) p (x) g (x) \diff x, \\
& \E_{\theta} \left[ H_{1} B + H_{3} | X \right] = \E_{\theta} \left[ H_{1} P + H_{2} | X \right] = 0 \textrm{ w.p.1, and } \\
& \E_{\theta} \left[ H \left( b^{\ast}, p^{\ast} \right) \right] - \E_{\theta} \left[ H \left( b, p \right) \right] = (-1)^{I \{h_{1} (W) \leq 0 \}} \Big\{ \int \left[ b - b^{\ast} \right] (x) \left[ p - p^{\ast} \right] (x) g (x) \diff x \Big\}.
\end{align*}
\end{lemma}

The development in \citet[Theorem 3.2 and Lemma 3.3]{robins2008higher} shows that the statistical results we have obtained when $\psi (\theta)$ is the mean response subject to MAR in previous sections can be extended to parameters defined as in Condition~\ref{cond:DR}. To be more precise, we state the following theorem, which is the last major theoretical result of this paper.
\begin{theorem}
\label{thm:general}
Suppose that Condition~\ref{cond:DR} holds and redefine $\varepsilon_{b} = B H_{1} + H_{3}$, $\varepsilon_{p} = H_{1} P + H_{2}$, $g (x) = \E_\theta [|H_{1}| \vert X = x] f (x)$. Then the conclusions of Proposition~\ref{theorem_bias_general_omega}, Proposition~\ref{theorem_bias_variance_ghat}, Theorem~\ref{theorem_bias_variance_emp}, Corollary~\ref{theorem_semipar_eff_emp} and Theorem~\ref{lemma_holder_bias} continue to hold under the same assumptions on the redefined nuisance parameters $\theta =(b, p, g)$, the corresponding nuisance parameter estimates $(\hat{b}, \hat{p})$, the basis $\bar{z}_{k}$ and the population and sample Gram matrices $\Omega$ and $\hat{\Omega}$.
\end{theorem}

\begin{remark}
Lemma~\ref{thm:dr} and Theorem~\ref{thm:general} can be further extended to the entire class of parameters with the so-called mixed bias property \citep{rotnitzky2021characterization}, which subsumes the class of doubly robust functionals \citep{robins2008higher} studied here; see \citet{liu2024assumption} for more details. However, we decide not to state results for this larger class of parameters. This is only because, if we do so, we will have to introduce extra notation that is orthogonal to the main message of this paper.
\end{remark}

\section{Simulation experiments}\label{section_simulations} 

In this section, we choose the marginal mean of $Y$ under MAR, $\psi = \Etheta \left[ A Y / \pi (X) \right] = \int b (x) p (x) g (x) \diff x$, as our target estimand. The main goal of this section is to demonstrate the advantage in finite sample performance of the empirical HOIF estimators $\hat{\IIFF}_{2, 2, k} (\omegahat^{\rm emp})$ and $\hat{\psi}_{2, k}^{\rm emp}$, compared to that of $\hat{\IIFF}_{2, 2, k} (\omegahat^{\rm ac})$ and $\hat{\psi}_{2, k}^{\rm ac}$ when $g$ is not very smooth. Based on the theoretical results in this paper, we expect that $\hat{\psi}_{2, k}^{\rm emp}$ should outperform $\hat{\psi}_{2, k}^{\rm ac}$ because the bias of $\hat{\psi}_{2, k}^{\rm emp}$ does not depend on the smoothness of the covariate density $g$. Moreover, unlike $\hat{\psi}_{2, k}^{\rm ac}$ relying on $\omegahat^{\rm ac}$, a quantity computed from high-dimensional numerical integration with respect to the estimated density $\hat{g}$, $\hat{\psi}_{2, k}^{\rm emp}$ completely bypasses this step and hence is much easier to compute. The goal of the simulation studies here is to demonstrate that $\hat{\psi}_{2, k}^{\rm emp}$ is a better choice than the HOIF estimators relying on density estimation, in particular when $g$ has low regularity.

Another related estimator that requires estimating $g$ but not numerical integration is $\hat{\psi}_{2, k} (\hat{g}) \coloneqq \hat{\psi}_{1} + \widehat{\mathbb{IF}}_{2, 2, k} (\hat{g})$, where 
\begin{equation*}
\widehat{\mathbb{IF}}_{2, 2, k} (\hat{g}) = \frac{(n - 2)!}{n!} \sum_{\bar{i}_{2} \in I_{n, 2}} \left[ A (Y - \hat{b} (X)) \bar{z}_{k}^{\top} (X) / \hat{g}^{1 / 2} (X) \right]_{i_{1}} \left[ \bar{z}_{k} (X) (1 - A \hat{p} (X)) / \hat{g}^{1 / 2} (X) \right]_{i_{2}} 
\end{equation*}
and it has been considered in \cite{robins2009quadratic} and \cite{mukherjee2016adaptive}. Similar to $\hat{\psi}_{2, k}^{\rm ac}$, we also expect $\hat{\psi}_{2, k} (\hat{g})$ to have larger bias than $\hat{\psi}_{2, k}^{\rm emp}$, but for slightly different reasons, which we now briefly explain. Consider the bias of $\hat{\psi}_{2, k} (\hat{g})$:
\begin{align*}
\Etheta \left[ \hat{\psi}_{2, k} (\hat{g}) - \psi (\theta) \right] = \underbrace{\Etheta \left[ \hat{\psi}_{2, k} (\hat{g}) - \hat{\psi}_{2, k} (g) \right]}_{\eqqcolon \, \EB_{2, k} (\hat{g})} + \underbrace{\Etheta \left[ \hat{\psi}_{2, k} (g) - \psi (\theta) \right]}_{\eqqcolon \, \TB_{k} (g)}.
\end{align*}
For $\hat{\psi}_{2, k} (\hat{g})$, not only its estimation bias $\EB_{2, k} (\hat{g})$, but also its truncation bias $\TB_{k} (g)$, depends on the smoothness of $g$. To see why this is the case for $\TB_{k} (g)$, let us rewrite $\TB_{k} (g)$ as follows
\begin{align*}
\TB_{k} (g) & = \Etheta \left[ \hat{\psi}_{2, k} (g) - \psi (\theta) \right] = \int \sPi_{g, \bar{z}_{k} g^{-1/2}}^{\perp} [b - \hat{b}] (x) \cdot \sPi_{g, \bar{z}_{k} g^{-1/2}}^{\perp} [p - \hat{p}] (x) \cdot g (x) \diff x \\
& = \int \sPi_{\Leb, \bar{z}_{k}}^{\perp} [(b - \hat{b}) g^{1 / 2}] (x) \cdot \sPi_{\Leb, \bar{z}_{k}}^{\perp} [(p - \hat{p}) g^{1 / 2}] (x) \diff x \\
& = \int \sPi_{\Leb, \bar{z}_{k}}^{\perp} [b^{\dag} - \hat{b}^{\dag}] (x) \cdot \sPi_{\Leb, \bar{z}_{k}}^{\perp} [p^{\dag} - \hat{p}^{\dag}] (x) \diff x
\end{align*}
where $\sPi_{\Leb, \bar{z}_{k}} [h] (\cdot) \coloneqq \bar{z}_{k} (\cdot)^{\top} \{\int \bar{z}_{k} (x) \bar{z}_{k} (x)^{\top} \diff x\}^{-1} \int \bar{z}_{k} (x) h (x) \diff x$ is the population projection operator onto the linear span of $\bar{z}_{k}$ with respect to the Lebesgue measure and $h^{\dag} \coloneqq h g^{1 / 2}$ for any function $h$. Even when the original residuals $b - \hat{b}$ and $p - \hat{p}$ are sufficiently smooth, the smoothness of the ``new'' residuals $b^{\dag} - \hat{b}^{\dag}$ and $p^{\dag} - \hat{p}^{\dag}$, after multiplied by a non-smooth function $g^{1 / 2}$, can be as non-smooth as $g^{1 / 2}$. Thus the strategy of dividing by $g^{1 / 2}$ to avoid high dimensional numerical integration may lead to very large truncation bias when $g$ is nonsmooth, on top of the larger estimation bias (in order) because of the dependence of $\EB_{2, k} (\hat{g})$ on $\Vert \hat{g} - g \Vert$.

In terms of the simulation setup, we consider the following data generating mechanism: 
\begin{itemize}
\item We draw $X_{j}$ for $j = 1, \ldots, d$, with correlations between every two dimensions but the same marginal density $f$ supported on $[0, 1]$ with $f \in \text{H\"{o}lder}(\beta_{g} = 0.1)$ (see Appendix~\ref{app:holder} for the concrete form of $f$), according to the algorithm described in Appendix~\ref{app:multiX}. We focus on $d = 4$ such that the function $\mathsf{kde}$ from $\mathsf{R}$ package $\mathsf{ks}$ can still be used to estimate $f(\cdot \vert A = 1)$ and hence $g$.  In fact, we could not carry out our simulation study in a timely fashion for $d \geq 5$ because the $\mathsf{kde}$ function failed to return a kernel density estimate of $g$, even after running for more than 4 days in the high performance computing (HPC) cluster which we used to conduct the simulation study. The bandwidth for estimating $f(\cdot \vert A = 1)$ is selected by smoothed cross-validation \citep{jones1991simple, duong2005cross}, the default setup of $\mathsf{kde}$. 

\item We then draw $Y$ and $A$ conditioning on $X$ according to the following data generating mechanism: 
\begin{align*}
Y & \sim b(X) + N(0, 1) = \sum_{j = 1}^{d} \zeta_{b, j} h_{b} (X_{j}; 0.25) + N(0, 1), \\
A & \sim \text{Bernoulli} \Big( p^{-1} (X) = \pi (X) = \text{expit} \Big\{ \sum_{j = 1}^{d} \zeta_{p, j} h_{p} (X_{j}; 0.25) \Big\} \Big),
\end{align*}
where $h_{b} (\cdot; 0.25)$ and $h_{p} (\cdot; 0.25)$ have the same form as defined in Appendix~\ref{app:holder} and thus both belong to \Holder{} classes with smoothness 0.25. The numerical values for $(\zeta_{b, j}, \zeta_{p, j})_{j = 1}^{d}$ are provided in Table~\ref{tab:s1}. We observe $Y$ if and only if $A = 1$ in the observed data. Finally, note that the smoothness of $g$ is much lower than those of $b$ and $p$.
\end{itemize}

The key findings of the simulation study can be summarized as follows:

\begin{itemize}
\item $\widehat{\mathbb{IF}}_{2, 2, k} (\omegahat^{\rm emp})$ can correct the bias of the first order estimator $\hat{\psi}_{1}$ without inflating the standard error and it takes shorter time to compute $\widehat{\mathbb{IF}}_{2, 2, k} (\omegahat^{\rm emp})$ than $\widehat{\mathbb{IF}}_{2, 2, k} (\omegahat^{\rm ac})$.

\item The difference between $\hat{\psi}_{2, k}^{\rm emp} = \hat{\psi}_{1} + \widehat{\mathbb{IF}}_{2, 2, k} (\omegahat^{\rm emp})$ and the oracle $\hat{\psi}_{2, k} (\Omega) = \hat{\psi}_{1} + \widehat{\mathbb{IF}}_{2, 2, k} (\Omega)$ is smaller than the difference between $\hat{\psi}_{2, k}^{\rm ac} = \hat{\psi}_{1} + \widehat{\mathbb{IF}}_{2, 2, k} (\omegahat^{\rm ac})$ and $\hat{\psi}_{2, k} (\Omega)$ when $g$ is not smooth. This is consistent with our theoretical results: unlike the estimation bias of $\hat{\psi}_{2, k}^{\rm ac}$ (see Proposition~\ref{theorem_bias_variance_ghat}), the estimation bias of $\hat{\psi}_{2, k}^{\rm emp}$ does not depend on the smoothness of $g$ (see Theorem~\ref{theorem_bias_variance_emp}).

\item $\widehat{\mathbb{IF}}_{2, 2, k} (\hat{g})$ does not correct as much bias as the other estimators including $\widehat{\mathbb{IF}}_{2, 2, k} (\omegahat^{\rm emp})$, $\widehat{\mathbb{IF}}_{2, 2, k} (\omegahat^{\rm ac})$ and the oracle $\widehat{\mathbb{IF}}_{2, 2, k} (\Omega)$.
\end{itemize}


When computing $\widehat{\mathbb{IF}}_{2, 2, k}(\omegahat^{\rm ac})$, $\omegahat^{\rm ac}$ was evaluated by Monte Carlo integration over $L = 10^{7}$ independent draws of $X_{j}$ for $j = 1, \ldots, d$ from $\hat{f} (\cdot \vert A = 1)$. We choose $L$ large enough such that $\widehat{\mathbb{IF}}_{2, 2, k}(\omegahat^{\rm ac})$ stabilizes. In terms of the basis functions, we choose $\bar{z}_{k} = \{\sqrt{2^8} \varphi (2^8 x_{j} - l), j = 1, \cdots, 4\}$: we transform each dimension of $X$ by the dilated and shifted $\varphi$ at resolution $2^{8}$, with $\varphi$ the D12/db6 father wavelet function, and then we concatenate the transformed functions across all four dimensions. Here $k = (2^{8} + 4) \cdot d = 260 \cdot 4 = 1,040$. The bases used for estimating $\psi (\theta)$ are different from those used to generate the true nuisance functions, although both bases are based on Daubechies wavelets.

To compare the estimation bias of $\widehat{\mathbb{IF}}_{2, 2, k}(\omegahat^{\rm emp})$ versus $\widehat{\mathbb{IF}}_{2, 2, k}(\omegahat^{\rm ac})$, we also need to know the value of the oracle estimator $\widehat{\mathbb{IF}}_{2, 2, k}(\Omega)$ with the true $\Omega$ numerically evaluated by computing $\widehat{\Omega}^{\rm emp}$ from $L = 10^{7}$ independent samples drawn from the true data generating process. Again, we choose $L$ large enough such that $\widehat{\mathbb{IF}}_{2, 2, k}(\Omega)$ stabilizes. 

We consider two different methods for estimating the nuisance functions $b$ and $1 / p$: (1) by the following generalized linear models (GLMs) so
\begin{align*}
\tilde{b}_{glm} (x) = \sum_{j = 1}^{d} \alpha_{b, glm, j} x_{j}, \tilde{p}_{glm} (x)^{-1} = \tilde{\pi}_{glm} (x) = \text{expit} \left\{ \sum_{j = 1}^{d} \alpha_{p, glm, j} x_{j} \right\}
\end{align*}
and (2) by the following generalized additive models (GAMs) \citep{hastie1986generalized}
\begin{align*}
\tilde{b}_{\rm gam} (x) = \sum_{j = 1}^{d} \alpha_{b, \rm gam, j} \mathsf{s} (x_{j}), \tilde{p}_{\rm gam} (x)^{-1} = \tilde{\pi}_{\rm gam} (x) & = \text{expit} \Big\{ \sum_{j = 1}^{d} \alpha_{p, \rm gam, j} \mathsf{s} (x_{j}) \Big\},
\end{align*}
where $\mathsf{s} (\cdot)$ is the smoothing spline transformation wherein the smoothing parameters are selected by generalized cross validation, the default setup in \texttt{gam} function from R package \texttt{mgcv} \citep{wood2016smoothing}. 

We compare different estimators with the same $k$, but across the following training sample sizes $n_{\rm tr} = 25000, 100000, 200000$ and estimation sample sizes $n = 25000, 100000, 200000$. All our simulation results are conditioning on one single training sample at each $n_{\rm tr}$. In terms of the computational efficiency, we have the following:
\begin{itemize}
\item On average, it only takes about 20 seconds, 1 minute and 2 minutes (for $n = 25000, 100000$ and $200000$) to compute $\widehat{\mathbb{IF}}_{2, 2, k}(\omegahat^{\rm emp})$ and $\widehat{\mathbb{IF}}_{2, 2, k}(\omegahat^{\rm ac})$ from the estimation sample after $\omegahat^{\rm emp}$, $\hat{g}$, and $\omegahat^{\rm ac}$ have been computed from the training sample. $\widehat{\mathbb{IF}}_{2, 2, k}(\hat{g})$ is faster to compute because it does not involve large matrix multiplication. But later we will show that the statistical performance of $\widehat{\mathbb{IF}}_{2, 2, k}(\hat{g})$ is much worse than $\widehat{\mathbb{IF}}_{2, 2, k}(\omegahat^{\rm emp})$ or $\widehat{\mathbb{IF}}_{2, 2, k}(\omegahat^{\rm ac})$.

\item In the training sample, it takes about 5 hours, one day and two days to compute $\hat{g}$ for $n_{\rm tr} = 25000, 100000, 200000$, and 4-5 hours to compute $\omegahat^{\rm ac}$ at $k = 1,040$ given $\hat{g}$. It takes about 5 minutes, 20 minutes and 40 minutes to compute $\omegahat^{\rm emp}$. 
\end{itemize}
Thus to summarize, $\widehat{\mathbb{IF}}_{2, 2, k}(\omegahat^{\rm emp})$ is the most efficient to compute among $\widehat{\mathbb{IF}}_{2, 2, k}(\omegahat^{\rm emp})$, $\widehat{\mathbb{IF}}_{2, 2, k}(\omegahat^{\rm ac})$ and $\widehat{\mathbb{IF}}_{2, 2, k}(\hat{g})$ (if also considering the time of estimating $\hat{g}$).

Finally, the results comparing the performance of $\widehat{\mathbb{IF}}_{2, 2, k}(\omegahat^{\rm emp})$, $\widehat{\mathbb{IF}}_{2, 2, k}(\omegahat^{\rm ac})$ and $\widehat{\mathbb{IF}}_{2, 2, k}(\hat{g})$ (and also $\hat{\psi}_{2, k}^{\rm emp}$, $\hat{\psi}_{2, k}^{\rm ac}$ and $\widehat{\psi}_{2, k}(\hat{g})$) are displayed in Figures~\ref{fig:1} and~\ref{fig:2} and Tables~\ref{tab:1} to~\ref{tab:4}. To save space, we defer all the tables to Appendix~\ref{app:tables}. Note that the pairwise differences among the last four columns of Table~\ref{tab:1} (resp. Table~\ref{tab:3}) should be identical to those among the last four columns of Table~\ref{tab:2} (resp. Table~\ref{tab:4}). We summarize our findings below:
\begin{itemize}
\item On the upper-left panel of Figure~\ref{fig:1}, we compare $\widehat{\mathbb{IF}}_{2, 2, k} (\Omega)$, $\widehat{\mathbb{IF}}_{2, 2, k} (\omegahat^{\rm emp})$, $\widehat{\mathbb{IF}}_{2, 2, k} (\omegahat^{\rm ac})$ and $\widehat{\mathbb{IF}}_{2, 2, k} (\hat{g})$ when the nuisance functions $b$ and $1 / p$ are estimated by GLM. The error bars represent the inter-90\%-quantiles out of 100 Monte Carlo repetitions. As expected, when the estimation sample size increases (from left to right within each column of every panel), the variability of the corresponding $\widehat{\mathbb{IF}}_{2, 2, k}$ decreases, as the error bars become narrower. The Monte Carlo distributions of the oracle $\widehat{\mathbb{IF}}_{2, 2, k} (\Omega)$ (grey dots and error bars) and $\widehat{\mathbb{IF}}_{2, 2, k} (\omegahat^{\rm emp})$ (blue dots and error bars) are very close, as the error bars for these two statistics are almost on top of each other. However, the distribution of $\widehat{\mathbb{IF}}_{2, 2, k} (\omegahat^{\rm ac})$ (purple dots and error bars) is quite different from that of the oracle $\widehat{\mathbb{IF}}_{2, 2, k} (\Omega)$ and $\widehat{\mathbb{IF}}_{2, 2, k} (\omegahat^{\rm emp})$. The difference between $\widehat{\mathbb{IF}}_{2, 2, k} (\hat{g})$ (green dots and error bars) and the other statistics is even more striking.

\ \ \ \ On the upper-right panel of Figure~\ref{fig:1}, the nuisance functions $b$ and $1 / p$ are estimated by GAM. The difference between $\widehat{\mathbb{IF}}_{2, 2, k} (\hat{g})$ and all the other statistics is obvious. But from this panel alone, it is hard to distinguish between $\widehat{\mathbb{IF}}_{2, 2, k} (\Omega)$, $\widehat{\mathbb{IF}}_{2, 2, k} (\omegahat^{\rm emp})$ and $\widehat{\mathbb{IF}}_{2, 2, k} (\omegahat^{\rm ac})$. Therefore in Figure~\ref{fig:2}, we plot everything as in Figure~\ref{fig:1}, but discard the results of $\widehat{\mathbb{IF}}_{2, 2, k} (\hat{g})$. Thus Figure~\ref{fig:2} is a zoom-in of Figure~\ref{fig:1}. From the upper-right panel of Figure~\ref{fig:2}, we can now clearly observe that the empirical distribution of $\widehat{\mathbb{IF}}_{2, 2, k} (\omegahat^{\rm emp})$ (blue dots and error bars) is closer to the empirical distribution of $\widehat{\mathbb{IF}}_{2, 2, k} (\Omega)$ (grey dots and error bars) than that of $\widehat{\mathbb{IF}}_{2, 2, k} (\omegahat^{\rm ac})$ (purple dots and error bars). To further highlight this observation, we also display the empirical distributions of $\widehat{\mathbb{IF}}_{2, 2, k} (\omegahat^{\rm emp}) - \widehat{\mathbb{IF}}_{2, 2, k} (\Omega)$ and $\widehat{\mathbb{IF}}_{2, 2, k} (\omegahat^{\rm ac}) - \widehat{\mathbb{IF}}_{2, 2, k} (\Omega)$ in Figure~\ref{fig:3} and it is apparent that the empirical distribution of $\widehat{\mathbb{IF}}_{2, 2, k} (\omegahat^{\rm emp}) - \widehat{\mathbb{IF}}_{2, 2, k} (\Omega)$ is much closer to 0.

\ \ \ \ All the above observations from the upper panels of Figures~\ref{fig:1} and~\ref{fig:2} can also be made from Tables~\ref{tab:1} and~\ref{tab:3}, in which we display the Monte Carlo averages and standard deviations of different versions of $\widehat{\mathbb{IF}}_{2, 2, k}$ across different training and estimation sample sizes. For example, the Monte Carlo averages of $\widehat{\mathbb{IF}}_{2, 2, k} (\omegahat^{\rm emp})$ are always closer to the corresponding Monte Carlo averages of the oracle $\widehat{\mathbb{IF}}_{2, 2, k} (\Omega)$ than those of $\widehat{\mathbb{IF}}_{2, 2, k} (\omegahat^{\rm ac})$ in both Table~\ref{tab:1} (nuisance functions estimated by GLM) and Table~\ref{tab:3} (nuisance functions estimated by GAM); in addition the Monte Carlo standard deviations of $\widehat{\mathbb{IF}}_{2, 2, k} (\omegahat^{\rm emp})$ are always smaller than those of $\widehat{\mathbb{IF}}_{2, 2, k} (\omegahat^{\rm ac})$.

\item On the lower panels of Figure~\ref{fig:1}, we compare the bias of estimating $\psi$ before ($\hat{\psi}_{1}$, black dots and error bars) or after bias correction ($\hat{\psi}_{2, k} (\Omega) = \hat{\psi}_{1} + \widehat{\mathbb{IF}}_{2, 2, k} (\Omega)$ grey dots and error bars, $\hat{\psi}_{2, k}^{\rm emp} = \hat{\psi}_{1} + \widehat{\mathbb{IF}}_{2, 2, k} (\omegahat^{\rm emp})$ blue dots and error bars, $\hat{\psi}_{2, k}^{\rm ac} = \hat{\psi}_{1} + \widehat{\mathbb{IF}}_{2, 2, k} (\omegahat^{\rm ac})$ purple dots and error bars and $\hat{\psi}_{2, k} (\hat{g}) = \hat{\psi}_{1} + \widehat{\mathbb{IF}}_{2, 2, k} (\hat{g})$ green dots and error bars). In particular, $\widehat{\mathbb{IF}}_{2, 2, k} (\hat{g})$ corrects little bias, as the distribution of $\hat{\psi}_{2, k} (\hat{g}) - \psi (\theta)$ is very close to the distribution of $\hat{\psi}_{1} - \psi (\theta)$ across different estimation and training sample sizes, regardless whether the nuisance parameters are estimated by GLM (lower-left) or GAM (lower-right).

\ \ \ \ From the lower-left panel of Figure~\ref{fig:1}, we observe that after corrected by $\widehat{\mathbb{IF}}_{2, 2, k} (\Omega)$ (grey dots and error bars) or $\widehat{\mathbb{IF}}_{2, 2, k} (\omegahat^{\rm emp})$ (blue dots and error bars), the biases of estimating $\psi$ are much closer to zero than using $\hat{\psi}_{1}$ without any bias correction (black dots and error bars) or even using $\hat{\psi}_{2, k}^{\rm ac}$ with bias corrected by $\widehat{\mathbb{IF}}_{2, 2, k} (\omegahat^{\rm ac})$ (purple dots and error bars). From the lower-right panel of Figure~\ref{fig:1}, it is hard to distinguish the bias of $\hat{\psi}_{2, k}^{\rm ac}$ from that of $\hat{\psi}_{2, k} (\Omega)$ or $\hat{\psi}_{2, k}^{\rm emp}$. Instead, we can examine the lower-right panel of Figure~\ref{fig:2} in which the results of $\hat{\psi}_{2, k} (\hat{g})$ are discarded. Now we are able to observe that $\hat{\psi}_{2, k}^{\rm emp}$ is still closer to $\hat{\psi}_{2, k} (\Omega)$ than $\hat{\psi}_{2, k}^{\rm ac}$, in particular as the training sample size increases. As displayed in Table~\ref{tab:2}, $\hat{\psi}_{2, k}^{\rm ac}$ has smaller bias (for estimating $\psi (\theta)$) than $\hat{\psi}_{2, k}^{\rm emp}$ or even the oracle estimator $\hat{\psi}_{2, k} (\Omega)$. However, one should compare $\hat{\psi}_{2, k}^{\rm ac}$ and $\hat{\psi}_{2, k}^{\rm emp}$ to the oracle $\hat{\psi}_{2, k} (\Omega)$ instead, because there is no theoretical reason for $\hat{\psi}_{2, k}^{\rm ac}$ to have smaller bias than $\hat{\psi}_{2, k} (\Omega)$. Therefore the bias of $\hat{\psi}_{2, k}^{\rm ac}$ will likely be greater than that of $\hat{\psi}_{2, k} (\Omega)$ for other simulation parameters.

\ \ \ \ All the above observations made from the lower panels of Figures~\ref{fig:1} and~\ref{fig:2} can also be made from Tables~\ref{tab:2} and~\ref{tab:4}, in which we display the Monte Carlo averages and standard deviations of different versions of $\hat{\psi}_{2, k}$ across different training and estimation sample sizes. For example, we observe that the Monte Carlo averages of $\hat{\psi}_{2, k}^{\rm emp} - \psi (\theta)$ are generally closer to those of $\hat{\psi}_{2, k} (\Omega) - \psi (\theta)$ than those of $\hat{\psi}_{2, k}^{\rm ac} - \psi (\theta)$; and the Monte Carlo standard deviations of $\hat{\psi}_{2, k}^{\rm emp} - \psi (\theta)$ are also smaller than those of $\hat{\psi}_{2, k}^{\rm ac} - \psi (\theta)$.

\ \ \ \ Similarly, the observations made from Figure~\ref{fig:3} can be read from Tables~\ref{tab:5} and~\ref{tab:6}: Obviously the distribution of $\widehat{\mathbb{IF}}_{2, 2, k} (\omegahat^{\rm emp}) - \widehat{\mathbb{IF}}_{2, 2, k} (\Omega)$ is much closer to 0 than that of $\widehat{\mathbb{IF}}_{2, 2, k} (\omegahat^{\rm ac}) - \widehat{\mathbb{IF}}_{2, 2, k} (\Omega)$.

\end{itemize}
In summary, in the above simulation in which $g$ is very rough, $\widehat{\mathbb{IF}}_{2, 2, k} (\omegahat^{\rm emp})$ (and also $\hat{\psi}_{2, k}^{\rm emp}$) has better finite sample statistical performance and is relatively more efficient to compute than $\widehat{\mathbb{IF}}_{2, 2, k} (\omegahat^{\rm ac})$ and $\widehat{\mathbb{IF}}_{2, 2, k} (\hat{g})$. Finally, it is worth emphasizing that the main goal of the simulation study here is quite modest--we simply would like to show that $\hat{\psi}_{2, k}^{\rm emp}$ has smaller bias than $\hat{\psi}_{2, k}^{\rm ac}$ or $\hat{\psi}_{2, k} (\hat{g})$ as an estimator of $\psi (\theta) + \mathrm{TB}_{k} (\theta) = \mathbb{E}_{\theta} [\hat{\psi}_{2, k} (\Omega)]$. It is possible that the bias of $\hat{\psi}_{2, k}^{\rm emp}$ as an estimator of $\psi (\theta)$ may be larger than $\hat{\psi}_{2, k}^{\rm ac}$ or $\hat{\psi}_{2, k} (\hat{g})$, depending on how their biases as estimators of $\psi (\theta) + \mathrm{TB}_{k} (\theta)$ interact with the truncation bias $\mathrm{TB}_{k} (\theta)$ in finite sample. In practice, one would not have the knowledge about the truncation bias. Therefore, our goal has to be to construct good estimators of what is estimable, which is $\mathbb{E}_{\theta} [\hat{\psi}_{2, k} (\Omega)]$, the mean of the oracle estimator. Simulation results presented in Table~\ref{tab:4} exemplify such a scenario. Also, to implement the empirical HOIF estimator proposed in this paper in real applications, several open questions need to be addressed, particularly how to select basis functions and $k$ in a data-driven manner. As suggested by a referee, in Appendix~\ref{app:practice}, we also explore the empirical performance of empirical HOIF estimators when both $k$ and basis functions are altered.


\section{Literature overview and discussions}

\label{section_end}

\subsection{Literature overview}
\label{section_literature} 

We now provide a literature overview on $\sqrt{n}$-consistent estimation under relaxed conditions for the class of functionals considered in this paper. For instance, the use of HOIF for bias correction has been considered in a series of papers \citep{tchetgen2008minimax, robins2008higher, diaz2016second, carone2018higher, robins2017minimax, mukherjee2016adaptive, van2021higher, yu2024treatment} but they all require either knowing the density of the covariates or estimating the density at a sufficiently fast rate. To the best of our knowledge, \citet{newey2018cross} is the first paper demonstrating the existence of $\sqrt{n}$-consistent estimators under minimal \Holder{} smoothness conditions of \cite{robins2009semiparametric} for a subclass of the functionals considered in our paper, without using the HOIF machinery. Similar bias correction techniques can also be found in the econometric literature \citep{cattaneo2018inference, cattaneo2019two}. However, that subclass does not include the mean of a response $Y$ under MAR or the average treatment effect under ignorable treatment assignment, except for the corner cases in which $\beta_{b}$ is greater than $\beta_{p}$. \citet{hirshberg2021augmented}, \cite{armstrong2021finite}, and \cite{kennedy2023towards} also obtained $\sqrt{n}$-consistent estimators for the ATE, but only for the aforementioned corner cases. Therefore the empirical HOIF estimator of diverging order proposed here remains the only known $\sqrt{n}$-consistent estimator under the minimal \Holder{} smoothness conditions of \cite{robins2009semiparametric} alone.

\cite{liu2024assumption} demonstrated another application of empirical HOIF estimators, which were used to construct a test of the hypothesis that $\hat{\psi}_{1}$ based on the first-order influence function is of a smaller order than its standard error \citep{liu2020nearly}. When the test rejects, one can conclude that a nominal $(1 - \alpha) \times 100 \%$ large-sample Wald confidence interval centered around $\hat{\psi}_{1}$ has an actual coverage less than $(1 - \alpha) \times 100 \%$. More recently, a variant of the empirical HOIF estimator studied here has also been applied to construct covariate adjusted estimator in randomized experiments with guaranteed efficiency gain with covariate dimension diverging at any rate slower than $n$ \citep{zhao2024hoif, gu2025assumption}.

\subsection{Discussions}
\label{section_discussions} 
We have shown that for $\sqrt{n}$-estimable parameters the asymptotic properties of our new empirical HOIF estimators are identical to those of the HOIF estimators of \cite{robins2008higher, robins2017minimax}, yet eliminate the need to construct multivariate density estimates and the extra smoothness assumptions on that density. We end our paper by pointing out several research directions:

\begin{itemize}
\item It is interesting to generalize the theory of HOIFs developed in \cite{robins2008higher, robins2017minimax} and of empirical HOIFs developed in our paper to other causal parameters or more complicated scenario, such as those in \cite{tchetgen2012semiparametric, ai2021unified, bhattacharya2022semiparametric, cui2024semiparametric, breunig2024adaptive}. \citet{liu2024assumption} derive the HOIFs for the mean of a response $Y$ under MNAR under the so-called proximal causal inference framework \citep{cui2024semiparametric}. \citet{kennedy2024minimax} used second-order influence functions to estimate the Conditional Average Treatment Effect (CATE) as a function of the covariates $X$ under ignorability. However their estimator did not achieve the minimax rate even when the CATE function was very smooth except when $g$ was also very smooth, because the order of the HOIF $U$-statistic estimator was not allowed to increase with sample size. 

\item Another interesting and important open problem is to investigate if it is possible to estimate the mean of a response $Y$ under MAR or equivalently the average treatment effect under the minimal \Holder{} smoothness conditions of \cite{robins2009semiparametric} without using U-statistics of diverging orders, e.g. using procedures similar to those in \cite{newey2018cross} and \cite{kennedy2023towards}. We expect this to be not possible but we do not have a proof.

\item In this paper, since we have focused on the case where the nuisance parameters are \Holder{} smooth functions, both the size and the type of basis functions/dictionary $\bar{z}_{k}$ can be determined theoretically (at least in the asymptotic sense). Developing data-driven basis selection methods is an important missing piece for HOIF estimators to be routinely employed in practice. We expect that a further sample splitting is needed to avoid model selection bias, but the criterion used for basis selection remains a difficult open problem. Finally, as mentioned at the end of Section~\ref{section_simulations}, we provide some further practical guidance on our proposed estimator in Appendix~\ref{app:practice}. The guidance is by no means definitive. We plan to report a more comprehensive empirical study in a future paper.
\end{itemize}

\begin{figure}[htbp]
\includegraphics[width=0.8\textwidth]{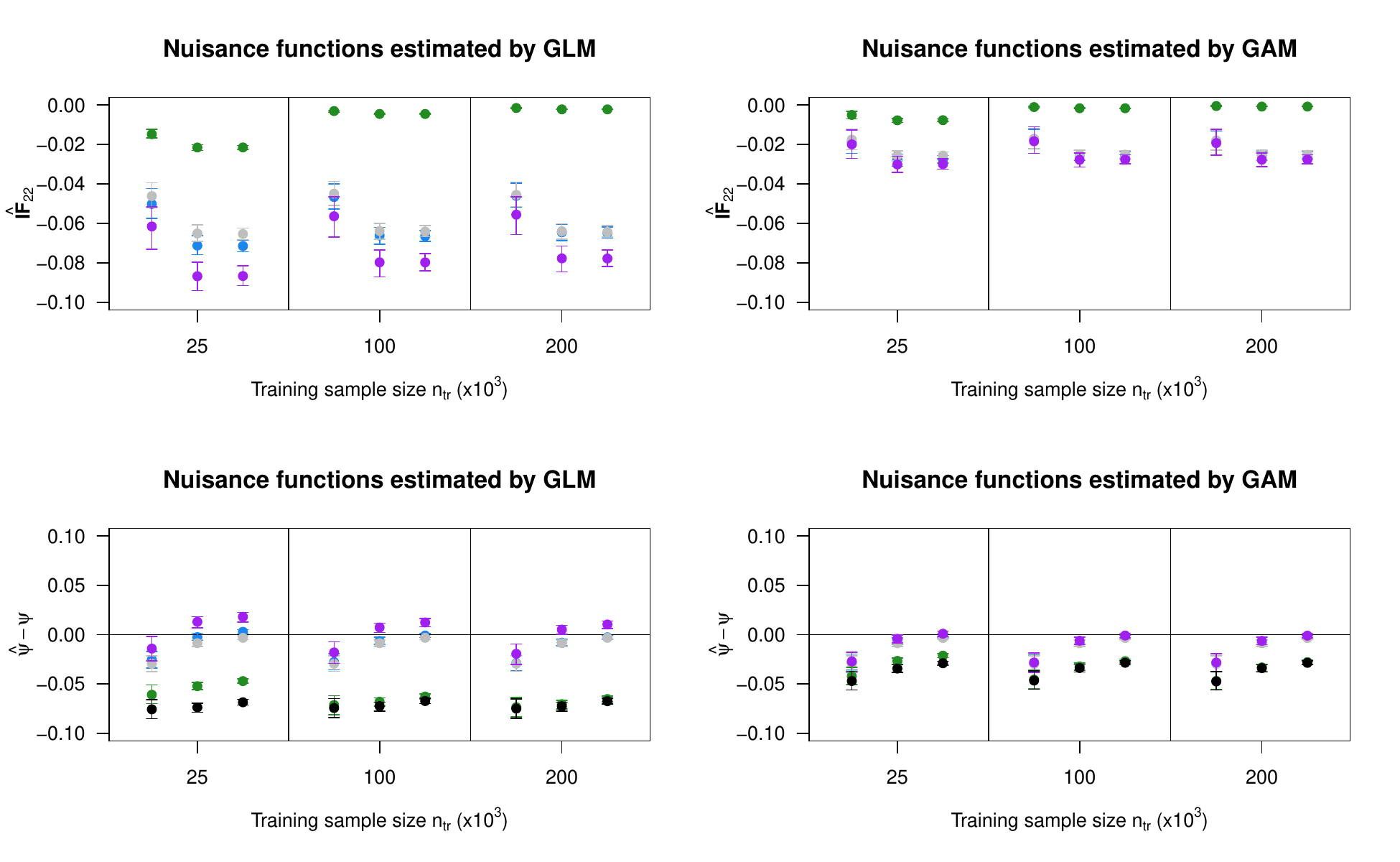} \centering
\caption{Results of simulation experiment. The upper panels compare $\widehat{\mathbb{IF}}_{2, 2, k} (\Omega)$, $\widehat{\mathbb{IF}}_{2, 2, k} (\omegahat^{\rm emp})$, $\widehat{\mathbb{IF}}_{2, 2, k} (\omegahat^{\rm ac})$ and $\widehat{\mathbb{IF}}_{2, 2, k} (\hat{g})$. Color code: black--$\hat{\psi}_{1} - \psi (\theta)$; grey--$\widehat{\mathbb{IF}}_{2, 2, k}(\Omega)$; blue--$\widehat{\mathbb{IF}}_{2, 2, k}(\omegahat^{\rm emp})$; purple--$\widehat{\mathbb{IF}}_{2, 2, k} (\omegahat^{\rm ac})$; green--$\widehat{\mathbb{IF}}_{2, 2, k} (\hat{g})$. The lower panels compare the estimators before and after being corrected by different versions of $\widehat{\mathbb{IF}}_{2, 2, k}$, i.e. $\hat{\psi}_{2, k}(\Omega) = \hat{\psi}_{1} + \widehat{\mathbb{IF}}_{2, 2, k} (\Omega)$, $\hat{\psi}_{2, k}^{\rm emp} = \hat{\psi}_{1} + \widehat{\mathbb{IF}}_{2, 2, k} (\omegahat^{\rm emp})$, $\hat{\psi}_{2, k}^{\rm ac} = \hat{\psi}_{1} + \widehat{\mathbb{IF}}_{2, 2, k} (\omegahat^{\rm ac})$, and $\hat{\psi}_{2, k}(\hat{g}) = \hat{\psi}_{1} + \widehat{\mathbb{IF}}_{2, 2, k} (\hat{g})$. Color code: black--$\hat{\psi}_{1} - \psi (\theta)$; grey--$\hat{\psi}_{2, k}(\Omega) - \psi (\theta)$; blue--$\hat{\psi}_{2, k}^{\rm emp} - \psi (\theta)$; purple--$\hat{\psi}_{2, k}^{\rm ac} - \psi (\theta)$; green--$\hat{\psi}_{2, k}(\hat{g}) - \psi (\theta)$. In the panels on the left, the nuisance functions $b$ and $1 / p$ are estimated by GLMs whereas in panels on the right, they are estimated by GAMs. The dots in each plot are the Monte Carlo averages across 100 simulated datasets. The error bars in each plot correspond to the 10\% and 90\% percentiles out of 100 Monte Carlo simulations. Within each column of any panel, from left to right we display the simulation results for estimation sample sizes $n = 25000, 100000, 200000$.}
\label{fig:1}
\end{figure}

\begin{figure}[htbp]
\includegraphics[width=0.8\textwidth]{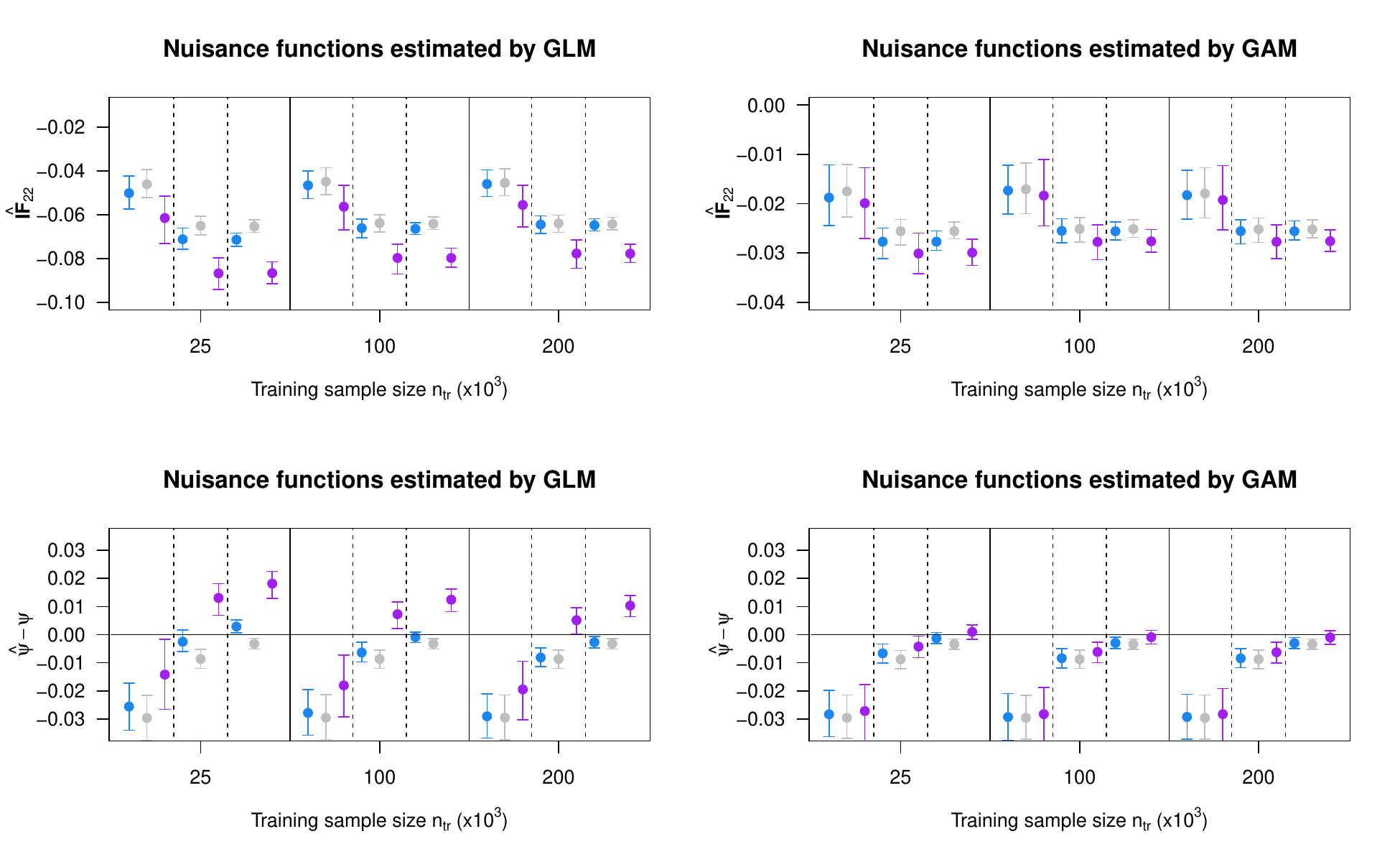} \centering
\caption{Results of simulation experiment. The color codes are the same as in Figure~\ref{fig:1}, except that the simulations for $\widehat{\mathbb{IF}}_{2, 2, k} (\hat{g})$ are removed from the upper panels and the simulations for $\hat{\psi}_{1} - \psi (\theta)$ and $\hat{\psi}_{2, k}(\hat{g}) - \psi (\theta)$ are removed from the lower panels. Within each column of any panel, from left to right we display the simulation results for estimation sample sizes $n = 25000, 100000, 200000$.}
\label{fig:2}
\end{figure}

\begin{figure}[htbp]
\includegraphics[width=0.8\textwidth]{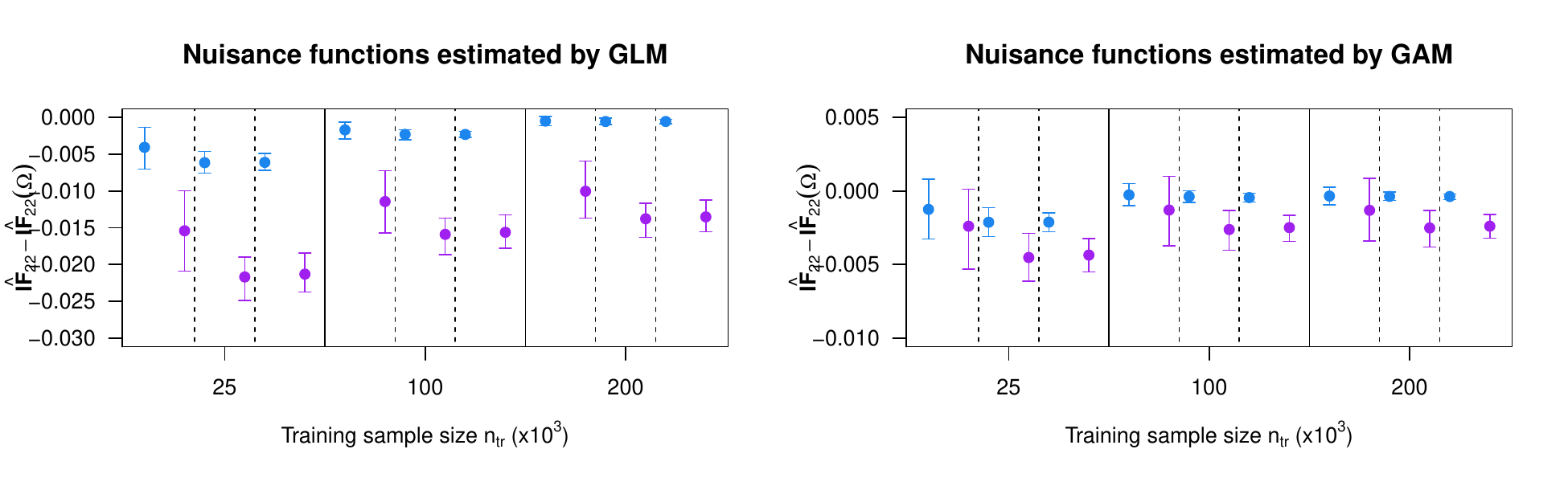} \centering
\caption{Results of simulation experiment. The color codes are the same as in Figure~\ref{fig:1}, except that only $\widehat{\mathbb{IF}}_{2, 2, k} (\omegahat^{\rm emp}) - \widehat{\IIFF}_{2, 2, k} (\Omega)$ and $\widehat{\mathbb{IF}}_{2, 2, k} (\omegahat^{\rm ac}) - \widehat{\IIFF}_{2, 2, k} (\Omega)$ are displayed to highlight the observation that $\widehat{\mathbb{IF}}_{2, 2, k} (\omegahat^{\rm emp})$ is closer to the oracle $\widehat{\IIFF}_{2, 2, k} (\Omega)$ than $\widehat{\mathbb{IF}}_{2, 2, k} (\omegahat^{\rm ac})$. Within each column of any panel, from left to right we display the simulation results for estimation sample sizes $n = 25000, 100000, 200000$.}
\label{fig:3}
\end{figure}

\section*{Acknowledgment}
We thank the anonymous associate editor and referees for constructive comments, Yulin Zhang (Renmin University of China), \href{https://sites.google.com/view/zheng-zhang}{Zheng Zhang} and \href{https://scholar.google.com/citations?user=Ys5ZVhEAAAAJ&hl=en&oi=sra}{Ling Guo} for helpful discussions, and \href{https://cxy0714.github.io/}{Xingyu Chen}, Sihui Zhao and Lei Ma for help on simulations. The computations in this paper were run on the Siyuan-1 and $\pi$-2.0 cluster supported by the Center for High Performance Computing at Shanghai Jiao Tong University. Lin Liu was supported by NSFC Grant No.12471274. Rajarshi Mukherjee was partially supported by NSF CAREER 8529216-01. James M. Robins was supported by the U.S. Office of Naval Research grant N000141912446, and National Institutes of Health (NIH) awards R01 AG057869 and R01 AI127271.

\putbib[free_lunch_biblio]
\end{bibunit}

\allowdisplaybreaks

\newpage

\appendix

\begin{bibunit}[imsart-nameyear]

\section{Derivation in the Introduction}
\label{app:intro}

\subsection{Derivation of the conditional bias of the first order doubly robust estimator}
\label{app:cbias}

In this section, we provide the details of the derivation of $\mathsf{cBias}_{\theta} (\hat{\psi}_{1})$ displayed in Section~\ref{section_missing_data_obs}. Note that the true parameter $\psi (\theta) = \mathbb{E}_{\theta} [b (X)] = \mathbb{E}_{\theta} [B]$, $\hat{P} = 1 / \hat{\Pi}$ and $P = 1 / \Pi$ in our notation. Then:
\begin{align*}
\mathsf{cBias}_{\theta} (\hat{\psi}_{1}) & = \mathbb{E}_{\theta} \left[ \frac{A}{\hat{\Pi}} (Y - \hat{B}) + \hat{B} - B \right] \\
& = \mathbb{E}_{\theta} \left[ \frac{A}{\hat{\Pi}} (B - \hat{B}) - (B - \hat{B}) \right] \\
& = \mathbb{E}_{\theta} \left[ \left( \frac{A}{\hat{\Pi}} - \frac{A}{\Pi} \right) (B - \hat{B}) \right] \\
& = \mathbb{E}_{\theta} \left[ A (\hat{P} - P) (B - \hat{B}) \right] \\
& = \int (\hat{p} (x) - p (x)) (b (x) - \hat{b} (x)) \pi (x) f (x) \diff x \\
& = \int (\hat{p} (x) - p (x)) (b (x) - \hat{b} (x)) g (x) \diff x,
\end{align*}
where the second equality follows from the definition of $b (x) = \mathbb{E}_{\theta} (Y | X = x, A = 1)$, the third and fifth equalities are results of using the law of iterated expectations, and the last equality applies the identity $g (\cdot) = \pi (\cdot) f (\cdot)$ because of how $g$ is defined.

\subsection{Derivation of the higher-order terms}
\label{app:higher-order}

In this section, we first heuristically explain why $\hat{\IIFF}_{3, 3, k} (\hat{\Omega})$ for any generic estimator $\hat{\Omega}$ of $\Omega$ is the statistic used to reduce the estimation bias $\EB_{2, k} (\theta)$, when $\hat{\psi}_{2, k} (\hat{\Omega})$ is used to estimate the oracle estimator $\hat{\psi}_{2, k} (\Omega)$. To this end, we explicitly write down the form of $\EB_{2, k} (\theta)$:
\begin{align*}
\EB_{2, k} (\theta) & = \Etheta [\hat{\psi}_{2, k} (\hat{\Omega}) - \hat{\psi}_{2, k} (\Omega)] \\
& = \Etheta [\hat{\mathbb{IF}}_{2, 2, k} (\hat{\Omega}) - \hat{\mathbb{IF}}_{2, 2, k} (\Omega)] \\
& = \Etheta [(1 - A \hat{P}) \bar{Z}_{k}^{\top}] (\hat{\Omega}^{-1} - \Omega^{-1}) \Etheta [\bar{Z}_{k} A (Y - \hat{B})] \\
& = \Etheta [(1 - A \hat{P}) \bar{Z}_{k}^{\top}] \Omega^{-1} (\Omega - \hat{\Omega}) \hat{\Omega}^{-1} \Etheta [\bar{Z}_{k} A (Y - \hat{B})].
\end{align*}
It is not difficult to see that the error due to estimating $\Omega$ by $\hat{\Omega}$ is of first-order in $\EB_{2, k} (\theta)$.

$\hat{\IIFF}_{3, 3, k} (\hat{\Omega})$ is a natural estimator of $- \EB_{2, k} (\theta)$ because:
\begin{align*}
\Etheta [\hat{\IIFF}_{3, 3, k} (\hat{\Omega})] & = - \Etheta \Big[ [(1 - A \hat{P}) \bar{Z}_{k}]_{1}^{\top} \hat{\Omega}^{-1} ([A \bar{Z}_{k} \bar{Z}_{k}^{\top}]_{3} - \hat{\Omega}) \hat{\Omega}^{-1} [\bar{Z}_{k} A (Y - \hat{B})]_{2} \Big] \\
& = - \Etheta [(1 - A \hat{P}) \bar{Z}_{k}^{\top}] \hat{\Omega}^{-1} (\Omega - \hat{\Omega}) \hat{\Omega}^{-1} \Etheta [\bar{Z}_{k} A (Y - \hat{B})].
\end{align*}
Recall that the third-order influence function estimator takes the form $\hat{\psi}_{3, k} (\hat{\Omega}) = \hat{\psi}_{2, k} (\hat{\Omega})$, which leads to the following estimation bias
\begin{align*}
\EB_{3, k} (\theta) & = \Etheta [\hat{\psi}_{3, k} (\hat{\Omega}) - \hat{\psi}_{2, k} (\Omega)] \\
& = \Etheta [\hat{\IIFF}_{3, 3, k} (\hat{\Omega})] + \EB_{2, k} (\theta) \\
& = - \Etheta [(1 - A \hat{P}) \bar{Z}_{k}^{\top}] \hat{\Omega}^{-1} (\Omega - \hat{\Omega}) \hat{\Omega}^{-1} \Etheta [\bar{Z}_{k} A (Y - \hat{B})] + \EB_{2, k} (\theta) \\
& = - \Etheta [(1 - A \hat{P}) \bar{Z}_{k}^{\top}] \Big\{ \hat{\Omega}^{-1} (\Omega - \hat{\Omega}) \hat{\Omega}^{-1} - \Omega^{-1} (\Omega - \hat{\Omega}) \hat{\Omega}^{-1} \Big\} \Etheta [\bar{Z}_{k} A (Y - \hat{B})] \\
& = - \Etheta [(1 - A \hat{P}) \bar{Z}_{k}^{\top}] \Big\{ (\hat{\Omega}^{-1} - \Omega^{-1}) (\Omega - \hat{\Omega}) \hat{\Omega}^{-1} \Big\} \Etheta [\bar{Z}_{k} A (Y - \hat{B})].
\end{align*}
Therefore, the bias due to estimating $\Omega$ by $\hat{\Omega}$ reduces to second-order. Similar reasoning applies to explain why a general $m$-th order influence function estimator $\hat{\psi}_{m, k} (\hat{\Omega})$ takes the form given in Section~\ref{section_missing_data_estimator}. More rigorous derivation can also be found in Appendix~\ref{app:bias bound} later. We also refer interested reader to Theorem 3.17 in \citet{robins2008higher} for similar derivations.

\section{Main Proofs}
\label{section_proofs} 
\begin{proof}[Proof of Proposition~\ref{theorem_bias_variance_ghat} and Theorem~\ref{theorem_bias_variance_emp}]
We divide our proof into bias and variance computations respectively. Throughout we assume $I(h_1 (W)\leq 0)=1$ almost surely. The case $I(h_1 (W)\geq 0$ requires obvious sign changes in various places. We first give the proof for a generic HOIF estimator $\hat{\psi}_{m, k}$ and then specialize to the empirical HOIF estimator $\hat{\psi}_{m, k}^{\rm emp}$.

\subsection{Bias bound}
\label{app:bias bound}

By the same analysis as in \cite{robins2008higher},
\[
\EB_{m, k} \left( \theta \right) = (-1)^{m} \E_{\theta} [H_{1} (P - \hat{P}) \bar{Z}_{k}^{\top}] \Omega^{-1} \left[ \left\{ \Omega - \hat{\Omega} \right\} \hat{\Omega}^{-1} \right]^{m - 1} \E_{\theta} [\bar{Z}_{k} H_{1} (B - \hat{B})].
\]
We next show that under the assumptions of Theorem~\ref{theorem_bias_variance_emp}
\[
\left\vert \EB_{m, k} \left( \theta \right) \right\vert = O \Big( \|\omegahat - \Omega\|_{\rm op}^{m - 1} \left\{ \E_{\theta} [(B - \hat{B})^{2}] \E_{\theta} [(P - \hat{P})^{2}] \right\}^{1 / 2} \Big),
\]
where $\| \cdot \|_{\rm op}$ denotes the operator norm of a matrix.

Now let $\hat{1}$ denote the indicator function for the event that $\lambda_{\max} (\hat{\Omega}^{-1}) \leq C$ for some $C > 0$. In the rest of the proof we take this $C$ sufficiently large but still bounded above such that $\lambda_{\max} ({\Omega}^{-1}) \leq C$ as well (note that this is allowed by Condition~\ref{def_conditions} assumed in the statement of the theorem).

By Cauchy-Schwarz inequality,
\[
\left\vert \EB_{m, k} (\theta) \right\vert \leq \Big\Vert \E_{\theta} [H_1 (P - \hat{P}) \bar{Z}_{k}^{\top}] \Omega^{-1/2} \Big\Vert \cdot \Big\Vert \Omega^{-1/2} [\{\Omega - \hat{\Omega}\} \hat{\Omega}^{-1}]^{m - 1} \Etheta [\bar{Z}_{k} H_1 (B - \hat{B})] \Big\Vert.
\]
Note that $\Vert\E_{\theta} [H_1 (P - \hat{P}) \bar{Z}_{k}^{\top}] \Omega^{-1/2}\Vert^{2}$ is the second moment of the linear projection of $P - \hat{P}$ on $\bar{Z}_{k}$ under $g$, so that
\[
\Big\Vert \E_{\theta} [H_1 (P - \hat{P}) \bar{Z}_{k}^{\top}] \Omega^{-1/2} \Big\Vert \leq \Big\{ \E_{\theta} [(P - \hat{P})^{2}] \Big\}^{1/2},
\]
by the norm contraction property of the linear projection. Denoting $\Sigma \coloneqq \Omega^{-1/2} [\{\Omega - \hat{\Omega}\} \hat{\Omega}^{-1}]^{m - 1}$, we then have, in the positive semi-definite sense,
\begin{align*}
\hat{1} \Sigma^{\top} \Sigma  & = \hat{1} \left[ \hat{\Omega}^{-1} \left\{ \Omega - \hat{\Omega} \right\} \right]^{m - 1} \Omega^{-1} \left[ \left\{ \Omega - \hat{\Omega} \right\} \hat{\Omega}^{-1} \right] ^{m - 1} \\
& \leq \hat{1} C \left[ \hat{\Omega}^{-1} \left\{ \Omega - \hat{\Omega} \right\} \right]^{m - 1} \left[ \left\{ \Omega - \hat{\Omega} \right\} \hat{\Omega}^{-1} \right]^{m - 1}\\
& = \hat{1} C \left[ \hat{\Omega}^{-1} \left\{ \Omega - \hat{\Omega} \right\} \right]^{m - 2} \hat{\Omega}^{-1} \left\{ \Omega - \hat{\Omega}\right\}^{2} \hat{\Omega}^{-1} \left[ \left\{ \Omega - \hat{\Omega} \right\} \hat{\Omega}^{-1} \right]^{m - 2} \\
& \leq \left\Vert \Omega - \hat{\Omega} \right\Vert_{\rm op}^{2} \hat{1} C \left[ \hat{\Omega}^{-1} \left\{ \Omega - \hat{\Omega} \right\} \right]^{m - 2} \hat{\Omega}^{-2} \left[ \left\{ \Omega - \hat{\Omega} \right\} \hat{\Omega}^{-1} \right]^{m - 2} \\
& \leq \left\Vert \Omega - \hat{\Omega} \right\Vert_{\rm op}^{2} \hat{1} C^{3} \left[ \hat{\Omega}^{-1} \left\{ \Omega - \hat{\Omega} \right\} \right]  ^{m - 2} \left[ \left\{ \Omega - \hat{\Omega} \right\} \hat{\Omega}^{-1} \right]^{m - 2}.
\end{align*}
Repeating this argument (i.e. by induction) we have
\[
\hat{1} \Sigma^{\top} \Sigma \leq \hat{1} \Vert \Omega - \hat{\Omega} \Vert_{\rm op}^{2 (m - 1)} C^{2 (m - 1) + 1} I.
\]
Next, since $I\leq\Omega^{-1} C^{-1}$ in the p.s.d. sense we have
\[
\hat{1} \Sigma^{\top} \Sigma \leq \hat{1} \Vert \Omega - \hat{\Omega} \Vert_{\rm op}^{2 (m - 1)} C^{2 (m - 1)} \Omega^{-1}.
\]
It then follows that
\begin{align*}
& \hat{1}\left\Vert \Omega^{-1/2} \left[ \left\{ \Omega - \hat{\Omega} \right\} \hat{\Omega}^{-1} \right]^{m - 1} \E_{\theta} [\bar{Z}_{k} H_1 (B - \hat{B})] \right\Vert^{2} \\
& = \hat{1} \Vert \Sigma \E_{\theta} [\bar{Z}_{k} H_1 (B - \hat{B})] \Vert^{2} = \hat{1} \E_{\theta} [H_1 (B - \hat{B}) \bar{Z}_{k}^{\top}] \Sigma^{\top} \Sigma \E_{\theta} [\bar{Z}_{k} H_1 (B - \hat{B})] \\
& \leq \hat{1} \Vert \Omega - \hat{\Omega} \Vert_{\rm op}^{2 (m - 1)} C^{2 (m - 1)} \E_{\theta} [H_1 (B - \hat{B}) \bar{Z}_{k}^{\top}] \Omega^{-1} \E_{\theta} [\bar{Z}_{k} H_1 (B - \hat{B})]. \\
& \leq \hat{1} \Vert \Omega - \hat{\Omega} \Vert_{\rm op}^{2 (m - 1)} C^{2 (m - 1)} \E_{\theta} [(B - \hat{B})^{2}],
\end{align*}
where the last inequality follows by $\E_{\theta} [H_1 (B - \hat{B}) \bar{Z}_{k}^{\top}] \Omega^{-1} \E_{\theta} [\bar{Z}_{k} H_1 (B - \hat{B})]$ being the expected square of the projection of $B - \hat{B}$ on $\bar{Z}_{k}$ under $g$. Therefore we have
\begin{align*}
\hat{1} \left\vert \EB_{m, k} \right\vert & \leq \hat{1} \Vert \Omega - \hat{\Omega} \Vert_{\rm op}^{m - 1} C^{m - 1} \left\{ \E_{\theta} [(B - \hat{B})^{2}] \E_{\theta} [(P - \hat{P})^{2}] \right\}^{1 / 2}.
\end{align*}
This completes the upper bound for the bias.

\subsection{Variance bound} \label{app:var_bound_1}
The strategy for the variance bound proof applies to both the empirical HOIF estimators and the HOIF estimators based on density estimation. In this section, we will first prove the variance bound for generic $\hat{\Omega}$, and then state the results for $\omegahat^{\rm emp}$ as direct consequences.

In the proof, the constant $C$, independent of the sample size, will change from line to line. In this section we are agnostic to the specific forms of $\epsb$ and $\epsp$ except that they are bounded with $\mathbb{P}_{\theta}$-probability 1.

For convenience, we introduce the $j$-th order U-statistic operator $\mathbb{U}_n (h(O_{\overline{i}_{j}}))$, for any nonnegative integer $j$ and any function $h: \mathbb{R}^{j} \rightarrow \mathbb{R}$:
\begin{align*}
\mathbb{U}_{n} (h(O_{\overline{i}_{j}})) \coloneqq \frac{(n - j)!}{n!} \sum_{\bar{i}_{j} \in I_{n, j}} h(O_{\overline{i}_{j}}).
\end{align*}

To control the variance of $\hat{\psi}_{m,k}$ we begin with the following variance bound of $\widehat{\mathbb{IF}}_{22} = \mathbb{U}_n(\IF_{2,2,k,\overline{i}_{2}})$. As expected, the proof makes use of Hoeffding decomposition.
\begin{lemma}\label{lem:var_if22}
Under the conditions of Proposition~\ref{theorem_bias_variance_ghat}, there exists a positive constant $C$, depending only on $(\bar{z}_{k}, p, \hat{p}, b, \hat{b}, g)$ such that
\begin{equation} \label{eqn:var_if22}
\begin{split}
\var_{\theta}\left(\mathbb{U}_n (\IF_{2,2,k,\overline{i}_{2}}) \right) & \leq \frac{C}{n} \left\{ \frac{k}{n} + \BL_{2, \hat{b}, k}^{2} + \BL_{2, \hat{p}, k}^{2} \right\}.
\end{split}
\end{equation}
\end{lemma}

\begin{proof}
By Hoeffding decomposition, 
\begin{align*}
& \ - \left\{ \mathbb{U}_n (\IF_{2,2,k,\overline{i}_{2}}) - \E_{\theta} [\mathbb{U}_n (\IF_{2,2,k,\overline{i}_{2}})] \right\} \\
= & \ \underbrace{\frac{1}{n} \sum_{i = 1}^{n} \E_{\theta} \left[ \epsb \bar{z}_{k} (X) \right]^{\top} \widehat{\Omega}^{-1} \left\{ \bar{z}_{k} (X_{i}) \varepsilon_{\hat{p}, i} - \E_{\theta} \left[ \epsp \bar{z}_{k} (X) \right] \right\}}_{T_{11}} \\
& + \underbrace{\frac{1}{n} \sum_{i = 1}^{n} \E_{\theta} \left[ \epsp \bar{z}_{k} (X) \right]^{\top} \widehat{\Omega}^{-1} \left\{ \bar{z}_{k} (X_{i}) \varepsilon_{\hat{b}, i} - \E_{\theta} \left[ \epsb \bar{z}_{k} (X) \right] \right\}}_{T_{12}} \\
& + \underbrace{\frac{1}{n (n - 1)} \sum_{\bar{i}_{2} \in I_{n, 2}} \left\{ \varepsilon_{\hat{b}, i_{1}} \bar{z}_{k} (X_{i_{1}}) - \E_{\theta} \left[ \epsb \bar{z}_{k} (X) \right] \right\}^{\top} \widehat{\Omega}^{-1} \left\{ \bar{z}_{k} (X_{i_{2}}) \varepsilon_{\hat{p}, i_{2}} - \E_{\theta} \left[ \bar{z}_{k} (X) \epsp \right] \right\}}_{T_{2}}.
\end{align*}

Define $\tilde\Omega \coloneqq \int \bar{z}_{k} (x) \bar{z}_{k} (x) f (x) \diff x$. Then under the assumptions (Condition~\ref{def_conditions}(2) and boundedness of $f$ and $\vert H_{1} \vert$) in our paper, there exists a universal constant $B' > 0$ such that $\frac{1}{B'} \leq \lambda_{\min}(\tilde\Omega) \leq \lambda_{\max}(\tilde\Omega) \leq B'$.

For the linear term $T_{11}$, we have
\begin{align*}
\var_{\theta} [T_{11}] & \leq \frac{1}{n} \Vert \epsp^{2} \Vert_{\infty} \E_{\theta} \left[ \epsb \bar{z}_{k} (X) \right]^{\top} \omegahat^{-1} \Etheta \left[ \bar{z}_{k} (X) \bar{z}_{k} (X)^{\top} \right] \omegahat^{-1} \E_{\theta} \left[ \bar{z}_{k} (X) \epsb \right] \\
& \leq \frac{1}{n} \Vert \epsp^{2} \Vert_{\infty} \E_{\theta} \left[ \epsb \bar{z}_{k} (X) \right]^{\top} \Omega^{- 1 / 2} \left( \Omega^{1 / 2} \omegahat^{-1} \tilde{\Omega}^{1 / 2} \right)^{2} \Omega^{- 1 / 2} \E_{\theta} \left[ \bar{z}_{k} (X) \epsb \right] \\
& \leq \frac{C}{n} \Vert \epsp^{2} \Vert_{\infty} \BL_{2, \hat{b}, k}^{2},
\end{align*}
where the last inequality follows from the definition of matrix operator norm. By symmetry,
\begin{align*}
\var_{\theta} [T_{12}] \leq \frac{C}{n} \Vert \epsb^{2} \Vert_{\infty} \BL_{2, \hat{p}, k}^{2}.
\end{align*}

For the second-order degenerate U-statistic term $T_{2}$, we have
\begin{align*}
\var_{\theta} [T_{2}] & \leq \frac{1}{n (n - 1)} \E_{\theta} \left[ \varepsilon_{\hat{b}, 1}^{2} \varepsilon_{\hat{p}, 2}^{2} \left( \bar{z}_{k} (X_{1})^{\top} \widehat{\Omega}^{-1} \bar{z}_{k} (X_{2}) \right)^{2} \right] \\
& \leq \frac{C}{n^{2}} \Vert \epsb^{2} \epsp^{2} \Vert_{\infty} \E_{\theta} \left[ \bar{z}_{k} (X)^{\top} \widehat{\Omega}^{-1} \tilde{\Omega} \widehat{\Omega}^{-1} \bar{z}_{k} (X) \right] \\
& \leq \frac{C}{n} \frac{k}{n} \Vert \epsb^{2} \epsp^{2} \Vert_{\infty}.
\end{align*}
Above the last inequality follows by Lemma~\ref{lemma_general_second_moment}.

Finally applying $\var_{\theta} [\mathbb{U}_n (\IF_{2,2,k,\overline{i}_{2}})] = \var_{\theta} [T_{11} + T_{12}] + \var_{\theta} [T_{2}] \leq 2 \var_{\theta} [T_{11}] + 2 \var_{\theta} [T_{12}] + \var_{\theta} [T_{2}]$, we obtain
$$
\var_{\theta} [\mathbb{U}_n (\IF_{2,2,k,\overline{i}_{2}})] \leq \frac{C}{n} \left\{ \BL_{2, \hat{b}, k}^{2} \Vert \epsp^{2} \Vert_{\infty} + \Vert \epsb^{2} \Vert_{\infty} \BL_{2, \hat{p}, k}^{2} + \frac{k}{n} \Vert \epsb^{2} \epsp^{2} \Vert_{\infty} \right\}.
$$
\end{proof}

Next we compute the variance bound of $\widehat{\mathbb{IF}}_{33} = \mathbb{U}_n(\IF_{3,3,k,\overline{i}_{3}})$. In particular, we have
\begin{lemma}\label{lem:var_if33}
Under the conditions of Proposition~\ref{theorem_bias_variance_ghat}, there exists a positive constant $C$, depending only on $(\bar{z}_{k}, p, \hat{p}, b, \hat{b}, g)$ such that
\begin{equation} \label{eqn:var_if33}
\begin{split}
\var_{\theta}\left(\mathbb{U}_n(\IF_{3,3,k,\overline{i}_{3}})\right) & \leq \frac{C}{n} \left\{ \begin{array}{c}
\left( \dfrac{k}{n} \right)^{2} + \dfrac{k}{n} \left( \BL_{2, \hat{b}, k}^{2} + \BL_{2, \hat{p}, k}^{2} + \BL_{2, \widehat{\Omega}, k}^{2} \right) \\
+ \left( \BL_{2, \hat{b}, k}^{2} \BL_{2, \widehat{\Omega}, k}^{2} + \BL_{2, \hat{p}, k}^{2} \BL_{2, \widehat{\Omega}, k}^{2} + \min\limits_{(\eta, \zeta): 1 / \eta + 1 / \zeta = 1} \BL_{2 \eta, \hat{b}, k}^{2} \BL_{2 \zeta, \hat{p}, k}^{2} \right)
\end{array} \right\}.
\end{split}
\end{equation}
\end{lemma}

The proof of Lemma~\ref{lem:var_if33} can be found in Appendix~\ref{proof:var_if33}. For general $j > 3$, we have the following result.

\begin{lemma} \label{lem:var_ifjj}
Under the conditions of Proposition~\ref{theorem_bias_variance_ghat}, up to a universal constant depending only on $(\bar{z}_{k}, p, \hat{p}, b, \hat{b}, g)$, we have
\begin{equation} \label{eqn:var_ifjj}
\begin{split}
& \ \var_{\theta} \left( \mathbb{U}_n (\IF_{j,j,k,\overline{i}_{j}}) \right) \lesssim \frac{j^{2}}{n} \BL_{2, \omegahat, k}^{2 (j - 3)} \left( \BL_{2, \hat{b}, k}^{2} \BL_{2, \omegahat, k}^{2} + \BL_{2, \hat{p}, k}^{2} \BL_{2, \omegahat, k}^{2} + \min\limits_{(\eta, \zeta): 1 / \eta + 1 / \zeta = 1} \BL_{2 \eta, \hat{b}, k}^{2} \BL_{2 \zeta, \hat{p}, k}^{2} \right) \\
& + \frac{1}{n} \sum_{\ell = 2}^{j - 1} j^{2 \ell} \left( \frac{C' k}{n} \right)^{\ell - 1} \BL_{2, \omegahat, k}^{2 (j - \ell - 2) \vee 0} \left( \begin{array}{c}
\BL_{2, \omegahat, k}^{4} + \BL_{2, \hat{b}, k}^{2} \BL_{2, \omegahat, k}^{2} + \BL_{2, \hat{p}, k}^{2} \BL_{2, \omegahat, k}^{2} \\
+ \min\limits_{(\eta, \zeta): 1 / \eta + 1 / \zeta = 1} \BL_{2 \eta, \hat{b}, k}^{2} \BL_{2 \zeta, \hat{p}, k}^{2}
\end{array} \right) + \frac{j^{2 j}}{n} \left( \frac{C' k}{n} \right)^{j - 1}.
\end{split}
\end{equation}
\end{lemma}

The proof of this lemma involves quite tedious calculations so we defer it to Appendix~\ref{proof:var_ifjj}.

Then combining Lemma~\ref{lem:var_if22},~\ref{lem:var_if33},~\ref{lem:var_ifjj} and the following two inequalities:
\begin{equation} \label{var_inequality}
\var_{\theta} \left(\sum_{\ell = 1}^{j} G_{\ell}\right) \leq \sum_{\ell = 1}^{j} 2^{\ell} \var_{\theta} [G_{\ell}], \var_{\theta} \left(\sum_{\ell = 1}^{j} G_{\ell}\right) \leq j \sum_{\ell = 1}^{j} \var_{\theta} [G_{\ell}],
\end{equation}
we have:
\begin{lemma} \label{lem:var_psim}
Under the conditions of Theorem~\ref{theorem_bias_variance_emp}, there exists a positive constant $C$ independent of $n, k, m$ but possibly dependent on $(\lambda_{\min} (\omegahat), \bar{z}_{k}, p, \hat{p}, b, \hat{b}, g)$ such that \allowdisplaybreaks
\begin{align}
& \ \var_{\theta} [\widehat{\psi}_{m, k} - \widehat{\psi}_{1}] = \var_{\theta} \left[ \sum_{j = 2}^{m} \hat{\IIFF}_{j, j, k} (\hat{\Omega}) \right] \label{eq:var_psim} \\
\leq & \ \frac{C}{n} \left( \dfrac{k}{n} + \BL_{2, \hat{b}, k}^{2} + \BL_{2, \hat{p}, k}^{2} \right) \notag \\
& + \frac{C}{n} \left( \left( \dfrac{k}{n} \right)^{2} + \frac{k}{n} \left\{ \BL_{2, \hat{b}, k}^{2} + \BL_{2, \hat{p}, k}^{2} + \BL_{2, \hat{\Omega}, k}^{2} \right\} + \left( \begin{array}{c}
\BL_{2, \hat{b}, k}^{2} \BL_{2, \omegahat, k}^{2} + \BL_{2, \hat{p}, k}^{2} \BL_{2, \omegahat, k}^{2} \\
+ \min\limits_{(\eta, \zeta): 1 / \eta + 1 / \zeta = 1} \BL_{2 \eta, \hat{b}, k}^{2} \BL_{2 \zeta, \hat{p}, k}^{2}
\end{array} \right) \right) \notag \\
& + \frac{C}{n} \sum_{j = 4}^{m} j^{2} \left( C \BL_{2, \omegahat, k} \right)^{2 (j - 3)} \left( \BL_{2, \hat{b}, k}^{2} \BL_{2, \omegahat, k}^{2} + \BL_{2, \hat{p}, k}^{2} \BL_{2, \omegahat, k}^{2} + \min\limits_{(\eta, \zeta): 1 / \eta + 1 / \zeta = 1} \BL_{2 \eta, \hat{b}, k}^{2} \BL_{2 \zeta, \hat{p}, k}^{2} \right) \notag \\
& + \frac{C}{n} \sum_{j = 4}^{m} j^{2} \sum_{\ell = 2}^{j - 1} \left( \frac{C k m^{2}}{n} \right)^{\ell - 1} \left( C \BL_{2, \omegahat, k} \right)^{2 (j - \ell - 2) \vee 0} \left( \begin{array}{c}
\BL_{2, \omegahat, k}^{4} + \BL_{2, \hat{b}, k}^{2} \BL_{2, \omegahat, k}^{2} + \BL_{2, \hat{p}, k}^{2} \BL_{2, \omegahat, k}^{2} \\
+ \min\limits_{(\eta, \zeta): 1 / \eta + 1 / \zeta = 1} \BL_{2 \eta, \hat{b}, k}^{2} \BL_{2 \zeta, \hat{p}, k}^{2}
\end{array} \right) \notag \\
& + \frac{C}{n} \sum_{j = 4}^{m} j^{2} \left( \frac{2 C k m^{2}}{n} \right)^{j - 1}. \notag
\end{align}
\end{lemma}

\begin{proof}
Using \eqref{var_inequality}, we have
\begin{align*}
& \ \var_{\theta} \left[ \sum_{j = 2}^{m} \hat{\IIFF}_{j, j, k} (\hat{\Omega}) \right] \leq 2 \var_{\theta} [\hat{\IIFF}_{2, 2, k} (\hat{\Omega}) + \hat{\IIFF}_{3, 3, k} (\hat{\Omega})] + \sum_{j = 4}^{m} 2^{j - 1} \var_{\theta} [\hat{\IIFF}_{j, j, k} (\hat{\Omega})] \\
\leq & \ 4 \left( \var_{\theta} [\hat{\IIFF}_{2, 2, k} (\hat{\Omega})] + \var_{\theta} [\hat{\IIFF}_{3, 3, k} (\hat{\Omega})] \right) + \sum_{j = 4}^{m} 2^{j - 1} \var_{\theta} [\hat{\IIFF}_{j, j, k} (\hat{\Omega})] \coloneqq I_{1} + I_{2} + I_{3}.
\end{align*}
By Lemma~\ref{lem:var_if22} and~\ref{lem:var_if33}, we have
\begin{align*}
I_{1} \leq \frac{C}{n} \left( \frac{k}{n} + \BL_{2, \hat{b}, k}^{2} + \BL_{2, \hat{p}, k}^{2} \right)
\end{align*}
and
\begin{align*}
I_{2} \leq \frac{C}{n} \left( \left( \dfrac{k}{n} \right)^{2} + \frac{k}{n} \left\{ \BL_{2, \hat{b}, k}^{2} + \BL_{2, \hat{p}, k}^{2} + \BL_{2, \hat{\Omega}, k}^{2} \right\} + \left( \begin{array}{c}
\BL_{2, \hat{b}, k}^{2} \BL_{2, \omegahat, k}^{2} + \BL_{2, \hat{p}, k}^{2} \BL_{2, \omegahat, k}^{2} \\
+ \min\limits_{(\eta, \zeta): 1 / \eta + 1 / \zeta = 1} \BL_{2 \eta, \hat{b}, k}^{2} \BL_{2 \zeta, \hat{p}, k}^{2}
\end{array} \right) \right).
\end{align*}
By Lemma~\ref{lem:var_ifjj}, we have
\begin{align*}
I_{3} \leq & \ \sum_{j = 4}^{m} \frac{j^{2}}{n} 2^{j - 1} \BL_{2, \omegahat, k}^{2 (j - 3)} \left( \BL_{2, \hat{b}, k}^{2} \BL_{2, \omegahat, k}^{2} + \BL_{2, \hat{p}, k}^{2} \BL_{2, \omegahat, k}^{2} + \min\limits_{(\eta, \zeta): 1 / \eta + 1 / \zeta = 1} \BL_{2 \eta, \hat{b}, k}^{2} \BL_{2 \zeta, \hat{p}, k}^{2} \right) \\
& + \sum_{j = 4}^{m} \frac{2^{j - 1}}{n} \sum_{\ell = 2}^{j - 1} j^{2 \ell} \left( \frac{C k}{n} \right)^{\ell - 1} \BL_{2, \omegahat, k}^{2 (j - \ell - 2) \vee 0} \left( \begin{array}{c}
\BL_{2, \omegahat, k}^{4} + \BL_{2, \hat{b}, k}^{2} \BL_{2, \omegahat, k}^{2} + \BL_{2, \hat{p}, k}^{2} \BL_{2, \omegahat, k}^{2} \\
+ \min\limits_{(\eta, \zeta): 1 / \eta + 1 / \zeta = 1} \BL_{2 \eta, \hat{b}, k}^{2} \BL_{2 \zeta, \hat{p}, k}^{2}
\end{array} \right) + \sum_{j = 4}^{m} \frac{j^{2 j}}{n} \left( \frac{2 C k}{n} \right)^{j - 1} \\
\leq & \ \sum_{j = 4}^{m} \frac{j^{2}}{n} 2^{j - 1} \BL_{2, \omegahat, k}^{2 (j - 3)} \left( \BL_{2, \hat{b}, k}^{2} \BL_{2, \omegahat, k}^{2} + \BL_{2, \hat{p}, k}^{2} \BL_{2, \omegahat, k}^{2} + \min\limits_{(\eta, \zeta): 1 / \eta + 1 / \zeta = 1} \BL_{2 \eta, \hat{b}, k}^{2} \BL_{2 \zeta, \hat{p}, k}^{2} \right) \\
& + \sum_{j = 4}^{m} \frac{2^{j - 1}}{n} \sum_{\ell = 2}^{j - 1} j^{2 \ell} \left( \frac{C k}{n} \right)^{\ell - 1} \BL_{2, \omegahat, k}^{2 (j - \ell - 2) \vee 0} \left( \begin{array}{c}
\BL_{2, \omegahat, k}^{4} + \BL_{2, \hat{b}, k}^{2} \BL_{2, \omegahat, k}^{2} + \BL_{2, \hat{p}, k}^{2} \BL_{2, \omegahat, k}^{2} \\
+ \min\limits_{(\eta, \zeta): 1 / \eta + 1 / \zeta = 1} \BL_{2 \eta, \hat{b}, k}^{2} \BL_{2 \zeta, \hat{p}, k}^{2}
\end{array} \right) + \sum_{j = 4}^{m} \frac{j^{2}}{n} \left( \frac{2 C k m^{2}}{n} \right)^{j - 1} \\
\leq & \ \frac{C}{n} \sum_{j = 4}^{m} j^{2} \left( C \BL_{2, \omegahat, k} \right)^{2 (j - 3)} \left( \BL_{2, \hat{b}, k}^{2} \BL_{2, \omegahat, k}^{2} + \BL_{2, \hat{p}, k}^{2} \BL_{2, \omegahat, k}^{2} + \min\limits_{(\eta, \zeta): 1 / \eta + 1 / \zeta = 1} \BL_{2 \eta, \hat{b}, k}^{2} \BL_{2 \zeta, \hat{p}, k}^{2} \right) \\
& + \frac{C}{n} \sum_{j = 4}^{m} j^{2} \sum_{\ell = 2}^{j - 1} \left( \frac{C k m^{2}}{n} \right)^{\ell - 1} \left( C \BL_{2, \omegahat, k} \right)^{2 (j - \ell - 2) \vee 0} \left( \begin{array}{c}
\BL_{2, \omegahat, k}^{4} + \BL_{2, \hat{b}, k}^{2} \BL_{2, \omegahat, k}^{2} + \BL_{2, \hat{p}, k}^{2} \BL_{2, \omegahat, k}^{2} \\
+ \min\limits_{(\eta, \zeta): 1 / \eta + 1 / \zeta = 1} \BL_{2 \eta, \hat{b}, k}^{2} \BL_{2 \zeta, \hat{p}, k}^{2}
\end{array} \right) \\
& + \frac{C}{n} \sum_{j = 4}^{m} j^{2} \left( \frac{2 C k m^{2}}{n} \right)^{j - 1}.
\end{align*}
\end{proof}

With Lemma~\ref{lem:var_psim}, we can further specify the orders of $m, k$ in terms of the sample size $n$, together with a condition on $\BL_{2, \hat{\Omega}, k}$ such that $\var_{\theta} [\widehat{\psi}_{m, k} - \widehat{\psi}_{1}]$ is dominated by that of $\var_{\theta} [\hat{\IIFF}_{2, 2, k}]$.

\begin{corollary} 
\label{lem:var_psim_emp}
Under the conditions of Lemma~\ref{lem:var_psim}, when $m \asymp \log n$, if we take $k \asymp \frac{n}{\log^{3} n}$, restricted to the event that $\hat{\Omega}$ is invertible and $\BL_{2, \hat{\Omega}, k} = o_{\mathbb{P}_{\theta}} (\log^{-1} n)$, there exists a positive constant $C$, depending only on $(\bar{z}_{k}, p, \hat{p}, b, \hat{b}, g)$
\begin{equation}
\label{new_var}
\var_{\theta} [\widehat{\psi}_{m, k} - \widehat{\psi}_{1}] \leq \frac{C}{n} \left( \dfrac{k}{n} + \left\{ \BL_{2, \hat{b}, k}^{2} + \BL_{2, \hat{p}, k}^{2} \right\} + \min\limits_{(\eta, \zeta): 1 / \eta + 1 / \zeta = 1} \BL_{2 \eta, \hat{b}, k}^{2} \BL_{2 \zeta, \hat{p}, k}^{2} \right).
\end{equation}
When $\hat{\Omega} = \hat{\Omega}^{\rm emp}$, $\var_{\theta} [\widehat{\psi}_{m, k}^{\rm emp} - \widehat{\psi}_{1}]$ directly satisfies the upper bound in \eqref{new_var}.
\end{corollary}

\begin{proof}
The conclusion of Corollary~\ref{lem:var_psim_emp} follows from the conclusion of Lemma~\ref{lem:var_psim}, together with the observation
\begin{equation}
\label{ignore}
\frac{k m^{2}}{n} \asymp \frac{n \log^{2} n}{n \log^{3} n} = \frac{1}{\log n} = o (1),
\end{equation}
when we choose $m \asymp \log n$ and $k \asymp \frac{n}{\log^{3} n}$. In the corollary, we have assumed that $\BL_{2, \hat{\Omega}, k} = o_{\mathbb{P}_{\theta}} (\log^{-1} n)$. Under this assumption and the choice of $m, k$, in \eqref{eq:var_psim}, for each summand in the summation $\sum_{j = 1}^{m} (\cdots)$, the multiplication factor $j^{2}$, despite growing at a rate $\log^{2} n$, will still be dominated by the other terms in each corresponding summand because they diminish to 0 at sufficiently fast rates. Hence the higher-order bias correction terms have variance dominated by the second- and third-order terms. When $\hat{\Omega} = \hat{\Omega}^{\rm emp}$, by Lemma~\ref{lemma_spectral_bound_rudelson} in the Appendix, $\BL_{2, \hat{\Omega}^{\rm emp}, k} = O_{\mathbb{P}_{\theta}} \left( \sqrt{\frac{k \log k}{n}} \right) = o_{\mathbb{P}_{\theta}} (\log^{-1} n)$ when $m \asymp \log n$ and $k \asymp \frac{n}{\log^{3} n}$. Therefore the same upper bound \eqref{new_var} holds for $\var_{\theta} [\widehat{\psi}_{m, k}^{\rm emp} - \widehat{\psi}_{1}]$.
\end{proof}
\end{proof}

Corollary~\ref{lem:var_psim_emp} implies the semiparametric efficiency result stated in Corollary~\ref{theorem_semipar_eff_emp}. When $\hat{\Omega} = \hat{\Omega}^{\rm ac}$, some smoothness assumptions on $g$, e.g. $s_{g} > 0$, need to be imposed to ensure $\BL_{2, \hat{\Omega}^{\rm ac}, k} = o_{\mathbb{P}_{\theta}} (1)$, following Lemma~\ref{lemma_op_ac} in Appendix~\ref{section_lemmas}.

\subsection{Details of the proof of the variance bound}
\label{app:var_bound_2}
\def\cp{\mathsf{c}}
We often use the following standard variance bound for general $m$-th order symmetric $U$-statistics.
\begin{lemma}
\label{lem:general}
For an $m$-th order $U$-statistic $\mathbb{U}_{n} h (O_{\bar{i}_{m}})$ with symmetric kernel $h: \mathbb{R}^{m} \rightarrow \mathbb{R}$, we have
\begin{equation}
\label{general bound}
\begin{split}
\var [\mathbb{U}_{n} h (O_{\bar{i}_{m}})] & = \sum_{\cp = 1}^{m} \frac{\binom{n - m}{m - \cp} \binom{m}{\cp}}{\binom{n}{m}} \cov \left[ h (X_{1}, \cdots, X_{m}), h (X_{1}, \cdots, X_{\cp}, X_{m + 1}, \cdots, X_{2 m - \cp}) \right] \\
& \leq \sum_{\cp = 1}^{m} \frac{(2 m^{2})^{\cp}}{n^{\cp}} \mathbb{E} \left[ \{\mathbb{E} [h (X_{1}, \cdots, X_{m}) | X_{1}, \cdots, X_{\cp}]\}^{2} \right].
\end{split}
\end{equation}
\end{lemma}

\begin{proof}
The proof is standard and hence omitted. But note that we use the following combinatorial inequality:
\begin{align*}
\frac{\binom{n - m}{m - \cp}}{\binom{n}{m}} \leq \frac{(2 m)^{\cp}}{n^{\cp}}
\end{align*}
for $n \geq 2 \cp$.
\end{proof}
As $\IF_{j, j, k, \overline{i}_{j}}$ is in general asymmetric, to facilitate the proof, we also introduce the $j$-th order symmetrization operator $\mathbb{S}_{j}$, for any $j$. With slight abuse of notation, we also denote all the permutations over $\{1, \cdots, j\}$ as $\mathbb{S}_{j}$.

\subsection{Proof of Lemma~\ref{lem:var_if33}}
\phantomsection
\label{proof:var_if33}
\begin{proof}
By using Lemma~\ref{lem:general},
\begin{align*}
& \ \var_{\theta} [\mathbb{U}_n [\mathbb{S}_{3} (\IF_{3,3,k,\overline{i}_{3}})]] \\
\leq & \sum_{\cp = 1}^{3} \frac{(3 \sqrt{2})^{2 \cp}}{n^{\cp}} \mathbb{E}_{\theta} \left[ \{\mathbb{E}_{\theta} [\mathbb{S}_{3} (\IF_{3,3,k} (O_{1}, O_{2}, O_{3})) | O_{1}, \cdots, O_{\cp}]\}^{2} \right] \\
\leq & \sum_{\cp = 1}^{3} \frac{(3 \sqrt{2})^{2 \cp}}{n^{\cp}} \max_{\sigma \in \mathbb{S}_{3}} \mathbb{E}_{\theta} \left[ \{\mathbb{E}_{\theta} [\sigma (\IF_{3,3,k} (O_{1}, O_{2}, O_{3})) | O_{1}, \cdots, O_{\cp}]\}^{2} \right] \\
\eqqcolon & \sum_{\cp = 1}^{3} S_{\cp}.
\end{align*}
Then as in the proof of Lemma~\ref{lem:var_if22}, below we repeatedly invoke Lemma~\ref{lemma_general_second_moment} to bound $S_{1}$, $S_{2}$, and $S_{3}$ separately.

For $S_{1}$, we have
\begin{align*}
S_{1} & \leq \frac{C}{n} \Vert \epsp^{2} \Vert_{\infty} \E_{\theta} \left[ \epsb \bar{z}_{k} (X) \right]^{\top} \widehat{\Omega}^{-1} \{\Omega - \omegahat\} \widehat{\Omega}^{-1} \Etheta \left[ \bar{z}_{k} (X) \bar{z}_{k} (X)^{\top} \right] \widehat{\Omega}^{-1} \{\Omega - \omegahat\} \widehat{\Omega}^{-1} \E_{\theta} \left[ \bar{z}_{k} (X) \epsb \right] \\
& \leq \frac{C}{n} \Vert \epsp^{2} \Vert_{\infty} \E_{\theta} \left[ \epsb \bar{z}_{k} (X) \right]^{\top} \widehat{\Omega}^{-1} \{\Omega - \omegahat\} \widehat{\Omega}^{-1} \tilde{\Omega} \widehat{\Omega}^{-1} \{\Omega - \omegahat\} \widehat{\Omega}^{-1} \E_{\theta} \left[ \bar{z}_{k} (X) \epsb \right] \\
& \leq \frac{C}{n} \Vert \epsp^{2} \Vert_{\infty} \E_{\theta} \left[ \epsb \bar{z}_{k} (X) \right]^{\top} \Omega^{- 1 / 2} (\Omega^{1 / 2} \widehat{\Omega}^{-1} \{\Omega - \omegahat\} \widehat{\Omega}^{-1} \tilde{\Omega}^{1 / 2})^{\otimes 2} \Omega^{- 1 / 2} \E_{\theta} \left[ \bar{z}_{k} (X) \epsb \right] \\
& \leq \frac{C}{n} \Vert \epsp^{2} \Vert_{\infty} \E_{\theta} \left[ \epsb \bar{z}_{k} (X) \right]^{\top} \Omega^{-1} \E_{\theta} \left[ \bar{z}_{k} (X) \epsb \right] \BL_{2, \omegahat, k}^{2} \\
& = \frac{C}{n} \Vert \epsp^{2} \Vert_{\infty} \BL_{2, \hat{b}, k}^{2} \BL_{2, \omegahat, k}^{2}.
\end{align*}
In the third line of the above display, given a matrix $A$, we use the short-hand outer product notation $A^{\otimes 2}$ to denote $A A^{\top}$.

By symmetry, we also have
\begin{align*}
S_{1} \leq \frac{C}{n} \Vert \epsb^{2} \Vert_{\infty} \BL_{2, \hat{p}, k}^{2} \BL_{2, \omegahat, k}^{2}.
\end{align*}
Similarly, we can also bound $S_{1}$ as follows:
\begin{align*}
S_{1} \leq & \ \frac{C}{n} \Etheta \left[ \left\{ \E_{\theta} \left[ \epsb \bar{z}_{k} (X) \right]^{\top} \widehat{\Omega}^{-1} \bar{z}_{k} (X) \bar{z}_{k} (X)^{\top} \widehat{\Omega}^{-1} \E_{\theta} \left[ \bar{z}_{k} (X) \epsp \right] \right\}^{2} \right] \\
\leq & \ \left\{ \frac{C}{n} \Etheta \left[ \left( \Etheta \left[ \epsb \bar{z}_{k} (X) \right]^{\top} \omegahat^{-1} \bar{z}_{k} (X) \right)^{2} \right] \left\Vert \E_{\theta} \left[ \epsp \bar{z}_{k} (X) \right]^{\top} \omegahat^{-1} \bar{z}_{k} (X) \right\Vert_{\infty}^{2} \right\} \\
& \ \wedge \left\{ \frac{C}{n} \left( \Etheta \left[ \left( \E_{\theta} \left[ \epsb \bar{z}_{k} (X) \right]^{\top} \omegahat^{-1} \bar{z}_{k} (X) \right)^{4} \right] \right)^{1 / 2} \left( \Etheta \left[ \left( \E_{\theta} \left[ \epsp \bar{z}_{k} (X) \right]^{\top} \omegahat^{-1} \bar{z}_{k} (X) \right)^{4} \right] \right)^{1 / 2} \right\} \\
\leq & \ \frac{C}{n} \left( \BL_{2, \hat{b}, k}^{2} \BL_{\infty, \hat{p}, k}^{2} \right) \wedge \left( \min\limits_{(\eta, \zeta): 1 / \eta + 1 / \zeta = 1} \BL_{2 \eta, \hat{b}, k}^{2} \BL_{2 \zeta, \hat{p}, k}^{2} \right).
\end{align*}
Again, by Lemma~\ref{lem:supnorm} and by symmetry, we have
\begin{align*}
S_{1} \leq \frac{C}{n} \min\limits_{(\eta, \zeta): 1 / \eta + 1 / \zeta = 1} \BL_{2 \eta, \hat{b}, k}^{2} \BL_{2 \zeta, \hat{p}, k}^{2}.
\end{align*}

For $S_{2}$, we have
\begin{align*}
S_{2} & \leq \frac{C}{n^{2}} \Etheta \left[ \left( \varepsilon_{\hat{b}, 1} \bar{z}_{k} (X_{1})^{\top} \omegahat^{-1} (\Omega - \omegahat) \omegahat^{-1} \bar{z}_{k} (X_{2}) \varepsilon_{\hat{p}, 2} \right)^{2} \right] \\
& \leq \frac{C}{n^{2}} \Vert \epsb^{2} \epsp^{2} \Vert_{\infty} \Etheta \left[ \bar{z}_{k} (X_{1})^{\top} \omegahat^{-1} (\Omega - \omegahat) \omegahat^{-1} \bar{z}_{k} (X_{2}) \bar{z}_{k} (X_{2})^{\top} \omegahat^{-1} (\Omega - \omegahat) \omegahat^{-1} \bar{z}_{k} (X_{1}) \right] \\
& = \frac{C}{n^{2}} \Vert \epsb^{2} \epsp^{2} \Vert_{\infty} \Etheta \left[ \bar{z}_{k} (X_{1})^{\top} \omegahat^{-1} (\Omega - \omegahat) \omegahat^{-1} \tilde{\Omega} \omegahat^{-1} (\Omega - \omegahat) \omegahat^{-1} \bar{z}_{k} (X_{1}) \right] \\
& \leq \frac{C}{n} \frac{k}{n} \Vert \epsb^{2} \epsp^{2} \Vert_{\infty} \BL_{2, \omegahat, k}^{2}.
\end{align*}
We can also bound $S_{2}$ as follows
\begin{align*}
S_{2} & \leq \frac{C}{n^{2}} \Etheta \left[ \left\{ \varepsilon_{\hat{b}, 1} \bar{z}_{k} (X_{1})^{\top} \omegahat^{-1} \bar{z}_{k} (X_{2}) \bar{z}_{k} (X_{2})^{\top} \omegahat^{-1} \Etheta \left[ \bar{z}_{k} (X) \epsp \right] \right\}^{2} \right] \\
& \leq \frac{C}{n^{2}} \Vert \epsb^{2} \Vert_{\infty} \Etheta \left[ \epsp \bar{z}_{k} (X) \right]^{\top} \omegahat^{-1} \Etheta \left[ \bar{z}_{k} (X_{2}) \left( \bar{z}_{k} (X_{1})^{\top} \omegahat^{-1} \bar{z}_{k} (X_{2}) \right)^{2} \bar{z}_{k} (X_{2})^{\top} \right] \omegahat^{-1} \Etheta \left[ \bar{z}_{k} (X) \epsp \right] \\
& \leq \frac{C}{n} \frac{1}{n} \Vert \epsb^{2} \Vert_{\infty} \Etheta \left[ \sPi_{g, \bar{z}_{k}} [\epsp] (X_{2})^{2} \bar{z}_{k} (X_{2})^{\top} \omegahat^{-1} \bar{z}_{k} (X_{1}) \bar{z}_{k} (X_{1})^{\top} \omegahat^{-1} \bar{z}_{k} (X_{2}) \right] \\
& = \frac{C}{n} \frac{1}{n} \Vert \epsb^{2} \Vert_{\infty} \Etheta \left[ \sPi_{g, \bar{z}_{k}} [\epsp] (X_{2})^{2} \bar{z}_{k} (X_{2})^{\top} \omegahat^{-1} \tilde{\Omega} \omegahat^{-1} \bar{z}_{k} (X_{2}) \right] \\
& \leq \frac{C}{n} \frac{k}{n} \Vert \epsb^{2} \Vert_{\infty} \Etheta \left[ \sPi_{g, \bar{z}_{k}} [\epsp] (X_{2})^{2} \right] \\
& \leq \frac{C}{n} \frac{k}{n} \BL_{2, \hat{p}, k}^{2} \Vert \epsb^{2} \Vert_{\infty}.
\end{align*}
By symmetry, we also have
\begin{align*}
S_{2} \leq \frac{C}{n} \frac{k}{n} \BL_{2, \hat{b}, k}^{2} \Vert \epsp^{2} \Vert_{\infty}.
\end{align*}
Finally, for $S_{3}$, we have
\begin{align*}
S_{3} & \leq \frac{C}{n^{3}} \E_{\theta} \left[ \varepsilon_{\hat{b}, 1}^{2} \varepsilon_{\hat{p}, 2}^{2} \left( \bar{z}_{k} (X_{1})^{\top} \widehat{\Omega}^{-1} \bar{z}_{k} (X_{3}) \bar{z}_{k} (X_{3})^{\top} \widehat{\Omega}^{-1} \bar{z}_{k} (X_{2}) \right)^{2} \right] \\
& \leq \frac{C}{n^{3}} \Vert \epsb^{2} \epsp^{2} \Vert_{\infty} \Etheta \left[ \bar{z}_{k} (X_{1})^{\top} \omegahat^{-1} \bar{z}_{k} (X_{3}) \bar{z}_{k} (X_{3})^{\top} \omegahat^{-1} \Omega \omegahat^{-1} \bar{z}_{k} (X_{3}) \bar{z}_{k} (X_{3})^{\top} \omegahat^{-1} \bar{z}_{k} (X_{1}) \right] \\
& \leq \frac{C}{n} \frac{k}{n^{2}} \Vert \epsb^{2} \epsp^{2} \Vert_{\infty} \Etheta \left[ \bar{z}_{k} (X_{1})^{\top} \omegahat^{-1} \bar{z}_{k} (X_{3}) \bar{z}_{k} (X_{3})^{\top} \omegahat^{-1} \bar{z}_{k} (X_{1}) \right] \\
& \leq \frac{C}{n} \left( \frac{k}{n} \right)^{2} \Vert \epsb^{2} \epsp^{2} \Vert_{\infty}.
\end{align*}

Taken the above analyses together, we obtain
\begin{align*}
\var_{\theta}\left(\mathbb{U}_n(\IF_{3,3,k,\overline{i}_{3}})\right) & \leq \frac{C}{n} \left\{ \begin{array}{c}
\left( \dfrac{k}{n} \right)^{2} \Vert \epsb^{2} \epsp^{2} \Vert_{\infty} + \dfrac{k}{n} \left( \BL_{2, \hat{b}, k}^{2} \Vert \epsp^{2} \Vert_{\infty} + \Vert \epsb^{2} \Vert_{\infty} \BL_{2, \hat{p}, k}^{2} + \Vert \epsb^{2} \epsp^{2} \Vert_{\infty} \BL_{2, \widehat{\Omega}, k}^{2} \right) \\
+ \left( \begin{array}{c} 
\BL_{2, \hat{b}, k}^{2} \Vert \epsp^{2} \Vert_{\infty} \BL_{2, \widehat{\Omega}, k}^{2} + \Vert \epsb^{2} \Vert_{\infty} \BL_{2, \hat{p}, k}^{2} \BL_{2, \widehat{\Omega}, k}^{2} \\
+ \min\limits_{(\eta, \zeta): 1 / \eta + 1 / \zeta = 1} \BL_{2 \eta, \hat{b}, k}^{2} \BL_{2 \zeta, \hat{p}, k}^{2}
\end{array} \right)
\end{array} \right\}.
\end{align*}
\end{proof}

\begin{remark}
\label{rem:cond-sw1}
Without Condition~\ref{stable}, we cannot guarantee $\Vert \sPi_{g, \bar{z}_{k}} [h] \Vert_{\infty}$ to be bounded even if $\Vert h \Vert_{\infty}$ is bounded. This will affect the variance bound for term $S_{1}$ in the proof above if we use the \Holder{} conjugate pairs $(1, \infty)$ and $(\infty, 1)$. We have instead
\begin{align*}
S_{1} \leq & \ \frac{C}{n} \Etheta \left[ \left\{ \E_{\theta} \left[ \epsb \bar{z}_{k} (X) \right]^{\top} \widehat{\Omega}^{-1} \bar{z}_{k} (X) \bar{z}_{k} (X)^{\top} \widehat{\Omega}^{-1} \E_{\theta} \left[ \bar{z}_{k} (X) \epsp \right] \right\}^{2} \right] \\
\leq & \ \frac{C}{n} \left\{ \Etheta \left[ \left( \E_{\theta} \left[ \epsb \bar{z}_{k} (X) \right]^{\top} \omegahat^{-1} \bar{z}_{k} (X) \right)^{4} \right] \right\}^{1 / 2} \left\{ \Etheta \left[ \left( \E_{\theta} \left[ \epsp \bar{z}_{k} (X) \right]^{\top} \omegahat^{-1} \bar{z}_{k} (X) \right)^{4} \right] \right\}^{1 / 2} \\
\lesssim & \ \frac{C}{n} \left\{ \Etheta \left[ \left( \E_{\theta} \left[ \epsb \bar{z}_{k} (X) \right]^{\top} \Omega^{-1} \bar{z}_{k} (X) \right)^{2} \right] \right\}^{1 / 2} \left\{ \Etheta \left[ \left( \E_{\theta} \left[ \epsp \bar{z}_{k} (X) \right]^{\top} \Omega^{-1} \bar{z}_{k} (X) \right)^{2} \right] \right\}^{1 / 2} \\
& \times \Vert \E_{\theta} \left[ \epsb \bar{z}_{k} (X) \right]^{\top} \Omega^{-1} \bar{z}_{k} (x) \Vert_{\infty} \Vert \E_{\theta} \left[ \epsp \bar{z}_{k} (X) \right]^{\top} \Omega^{-1} \bar{z}_{k} (x) \Vert_{\infty} \\
\leq & \ \frac{C}{n} k \BL_{2, \hat{b}, k}^{2} \BL_{2, \hat{p}, k}^{2}
\end{align*}
where the last line inequality follows from Cauchy-Schwarz inequality to bound the $L_{\infty}$ norms. This weakened bound in turn gives us
\begin{align*}
\var_{\theta}\left(\mathbb{U}_n(\IF_{3,3,k,\overline{i}_{3}})\right) & \leq \frac{C}{n} \left\{ \begin{array}{c}
\left( \dfrac{k}{n} \right)^{2} \Vert \epsb^{2} \epsp^{2} \Vert_{\infty} + \dfrac{k}{n} \left( \BL_{2, \hat{b}, k}^{2} \Vert \epsp^{2} \Vert_{\infty} + \BL_{2, \hat{p}, k}^{2} \Vert \epsb^{2} \Vert_{\infty} + \BL_{2, \widehat{\Omega}, k}^{2} \Vert \epsb^{2} \epsp^{2} \Vert_{\infty} \right) \\
+ \left( \BL_{2, \hat{b}, k}^{2} \BL_{2, \widehat{\Omega}, k}^{2} \Vert \epsp^{2} \Vert_{\infty} + \BL_{2, \hat{p}, k}^{2} \BL_{2, \widehat{\Omega}, k}^{2} \Vert \epsb^{2} \Vert_{\infty} + k \BL_{2, \hat{b}, k}^{2} \BL_{2, \hat{p}, k}^{2} \right)
\end{array} \right\}.
\end{align*}
Finally, we remark that the above upper bound might not be tight, but we believe it requires significant efforts to improve it or show a matching lower bound. Hence we leave it to future work.
\end{remark}

\subsection{Proof of Lemma~\ref{lem:var_ifjj}}
\label{proof:var_ifjj}
\begin{proof}
For general $j \geq 3$, by using Lemma~\ref{lem:general}, we similarly have
\begin{align*}
& \ \var_{\theta} [\mathbb{U}_n [\mathbb{S}_{j} (\IF_{j,j,k,\overline{i}_{j}})]] \\
\leq & \sum_{\cp = 1}^{j} \frac{(\sqrt{2} j)^{2 \cp}}{n^{\cp}} \mathbb{E}_{\theta} \left[ \{\mathbb{E}_{\theta} [\mathbb{S}_{j} (\IF_{j,j,k} (O_{1}, \cdots, O_{j})) | O_{1}, \cdots, O_{\cp}]\}^{2} \right] \\
\leq & \sum_{\cp = 1}^{j} \frac{(2 j^{2})^{\cp}}{n^{\cp}} \max_{\sigma \in \mathbb{S}_{j}} \mathbb{E}_{\theta} \left[ \{\mathbb{E}_{\theta} [\sigma (\IF_{j, j, k} (O_{1}, \cdots, O_{j})) | O_{1}, \cdots, O_{\cp}]\}^{2} \right] \\
\eqqcolon & \sum_{\cp = 1}^{j} S_{\cp}.
\end{align*}

We consider the $S_{\cp}$ for $\cp = 1, \ldots, j$ separately. When $\cp = j$, there is only one term $R_{j}$ to choose from to be the dominating term for $S_{j}$. When $\cp = 1$, we have $j$ different terms, denoted as $R_{11}, \ldots, R_{1j}$ to choose from to be the dominating term for $S_{1}$. Here we let $R_{1\ell}$ be not marginalized over $O_{\ell}$ for $\ell = 1, \cdots, j$. 

For general $\cp$, there will be $\binom{j}{\cp}$ terms in total. First denote the order of the elements in the product of the U-statistic kernel 
\begin{align*}
\IF_{j, j, k, \overline{i}_{j}} = \varepsilon_{\hat{b}, i_{1}} \bar{z}_{k} (X_{i_{1}})^{\top} \omegahat^{-1} \prod_{s = 3}^{j} \left[ \left( \vert H_{1, i_{s}} \vert \bar{z}_{k} (X_{i_{s}}) \bar{z}_{k} (X_{i_{s}})^{\top} - \omegahat \right) \omegahat^{-1} \right] \bar{z}_{k} (X_{i_{2}}) \varepsilon_{\hat{b}, i_{2}}
\end{align*}
by their corresponding ordered subscripts $\{1, \cdots, j\}$ in $\overline{i}_{j}$. Then we let the first $\binom{j - 2}{\cp - 2}$ terms $R_{\cp 1}, R_{\cp 2}, \cdots$ be not marginalized over element $1$, element $2$ and any combination of $\cp - 2$ elements out of $\{3, \cdots, j\}$; the next $\binom{j - 2}{\cp - 1}$ terms be not marginalized over element $1$ and any combination of $\cp - 1$ elements out of $\{3, \cdots, j\}$; the next $\binom{j - 2}{\cp - 1}$ terms be not marginalized over element $2$ and any combination of $\cp - 1$ elements out of $\{3, \cdots, j\}$; and the remaining $\binom{j - 2}{\cp}$ terms be not marginalized over any combination of $\cp$ elements out of $\{3, \cdots, j\}$. 

With the above notation, following calculations similar to those in the proofs of Lemma~\ref{lem:var_if22} and~\ref{lem:var_if33}, we can bound each of the above terms separately:

For $\cp = 1$, we first control the variance of $R_{11}$
\begin{align*}
\var_{\theta} [R_{11}] & \leq \frac{(\sqrt{2} j)^{2}}{n} \Vert \epsb^{2} \Vert_{\infty} \Etheta \left[ \epsp \bar{z}_{k} (X) \right]^{\top} \omegahat^{-1} \Etheta \left[ \left\{ (\Omega - \omegahat) \omegahat^{-1} \right\}^{j - 2} \tilde{\Omega} \left\{ (\Omega - \omegahat) \omegahat^{-1} \right\}^{j - 2} \right] \omegahat^{-1} \Etheta \left[ \bar{z}_{k} (X) \epsp \right] \\
& \leq \frac{C j^{2}}{n} \Vert \epsb^{2} \Vert_{\infty} \BL_{2, \hat{p}, k}^{2} \BL_{2, \omegahat, k}^{2 (j - 2)} \frac{\lambda_{\max} (\Omega) \lambda_{\max} (\tilde\Omega)}{\lambda_{\min} (\omegahat)^{2 (j - 1)}}.
\end{align*}
By symmetry, we also have
\begin{align*}
\var_{\theta} [R_{12}] & \leq \frac{C j^{2}}{n} \Vert \epsp^{2} \Vert_{\infty} \BL_{2, \hat{b}, k}^{2} \BL_{2, \omegahat, k}^{2 (j - 2)} \frac{\lambda_{\max} (\Omega) \lambda_{\max} (\tilde\Omega)}{\lambda_{\min} (\omegahat)^{2 (j - 1)}}.
\end{align*}
$\var_{\theta} [R_{1\ell}]$ for $3 \leq \ell \leq j$ is upper bounded as follows:
\begin{align*}
& \ \var_{\theta} [R_{1\ell}] \\
\leq & \ \frac{C j^{2}}{n} \Etheta \left[ \left\{ \Etheta \left[ \epsb \bar{z}_{k} (X) \right]^{\top} \omegahat^{-1} \left[ \left( \Omega - \omegahat \right) \omegahat^{-1} \right]^{h - 3} \vert H_{1, h} \vert \bar{z}_{k} (X_{h}) \bar{z}_{k} (X_{h})^{\top} \omegahat^{-1} \left[ \left( \Omega - \omegahat \right) \omegahat^{-1} \right]^{j - h} \Etheta \left[ \bar{z}_{k} (X) \epsp \right] \right\}^{2} \right] \\
\leq & \ \frac{C j^{2}}{n} \Etheta \left[ \left\{ \Etheta \left[ \epsb \bar{z}_{k} (X) \right]^{\top} \omegahat^{-1} \left[ \left( \Omega - \omegahat \right) \omegahat^{-1} \right]^{h - 3} \bar{z}_{k} (X_{h}) \right\}^{2} \right] \left\Vert \bar{z}_{k} (X_{h})^{\top} \omegahat^{-1} \left[ \left( \Omega - \omegahat \right) \omegahat^{-1} \right]^{j - h} \Etheta \left[ \bar{z}_{k} (X) \epsp \right] \right\Vert_{\infty}^{2} \\
\leq & \ \frac{C j^{2}}{n} \BL_{2, \hat{b}, k}^{2} \BL_{2, \omegahat, k}^{2 (h - 3)} \frac{\lambda_{\max} (\Omega) \lambda_{\max} (\tilde\Omega)}{\lambda_{\min} (\omegahat)^{2 (h - 2)}} \left\Vert \bar{z}_{k} (X_{h})^{\top} \omegahat^{-1} \left[ \left( \Omega - \omegahat \right) \omegahat^{-1} \right]^{j - h} \Etheta \left[ \bar{z}_{k} (X) \epsp \right] \right\Vert_{\infty}^{2}.
\end{align*}
Then by Lemma~\ref{lem:supnorm} and \Holder{} inequality with \Holder{} conjugate pair $(1, \infty)$, with $C'$ a constant depending on $\lambda_{\min} (\hat{\Omega})$,
\begin{align*}
\var_{\theta} [R_{1\ell}] & \leq \frac{C}{n} \BL_{2, \hat{b}, k}^{2} \BL_{2, \omegahat, k}^{2 (h - 3)} \frac{\lambda_{\max} (\Omega) \lambda_{\max} (\tilde\Omega)}{\lambda_{\min} (\omegahat)^{2 (h - 2)}} \BL_{\infty, \hat{p}, k}^{2} \BL_{2, \omegahat, k}^{2 (j - h)} \frac{1}{\lambda_{\min} (\omegahat)^{2 (j - h + 1)}} \\
& \leq \frac{C j^{2}}{n} \BL_{2, \hat{b}, k}^{2} \BL_{\infty, \hat{p}, k}^{2} (C' \BL_{2, \omegahat, k})^{2 (j - 3)}.
\end{align*}
Note that $\var_{\theta} [R_{1 \ell}]$ (and other similar terms in the proof) can also be bounded by \Holder{} inequality with any valid \Holder{} conjugate pair $(\eta, \zeta)$ \citep{valiant2017automatic}. 
\begin{align*}
\var_{\theta} [R_{1\ell}] & \leq \frac{C j^{2}}{n} \min\limits_{(\eta, \zeta): 1 / \eta + 1 / \zeta = 1} \BL_{2 \eta, \hat{b}, k}^{2} \BL_{2 \zeta, \hat{p}, k}^{2} \cdot (C' \BL_{2, \omegahat, k})^{2 (j - 3)}.
\end{align*}
By symmetry, we have
\begin{align*}
\var_{\theta} [R_{1\ell}] & \leq \frac{C j^{2}}{n} \min\limits_{(\eta, \zeta): 1 / \eta + 1 / \zeta = 1} \BL_{2 \eta, \hat{b}, k}^{2} \BL_{2 \zeta, \hat{p}, k}^{2} \cdot (C' \BL_{2, \omegahat, k})^{2 (j - 3)}
\end{align*}
Hence
\begin{align*}
S_{1} & \leq \max_{\ell = 1, \ldots, j} \var_{\theta} [R_{1 \ell}] \\
& \leq \frac{C j^{2}}{n} \left( C' \BL_{2, \omegahat, k} \right)^{2 (j - 1)} \left( \BL_{2, \hat{b}, k}^{2} + \BL_{2, \hat{p}, k}^{2} + \min\limits_{(\eta, \zeta): 1 / \eta + 1 / \zeta = 1} \BL_{2 \eta, \hat{b}, k}^{2} \BL_{2 \zeta, \hat{p}, k}^{2} \right).
\end{align*}

For $1 < \cp < j$, for any of the first $\binom{j - 2}{\cp - 2}$ terms, it has to be of the following form: denote the indices of the $\ell$ elements that are conditioned on as $t_{1} = 1, t_{2} = 2$, and $\{ t_{3}, \cdots, t_{\cp} \} \subseteq \{3, \cdots, j\}$.
\begin{align*}
R_{\cp \cdot} = \left\{ \begin{array}{c}
\ \left[ \epsb \bar{z}_{k} (X)^{\top} \right]_{i_{1}} \omegahat^{-1} \\
\times \ \prod\limits_{s = 3}^{\cp} \left[ \left( \Omega - \omegahat \right) \omegahat^{-1} \right]^{t_{s} - t_{s - 1} - 1} \left[ \vert H_{i_{s}} \vert \bar{z}_{k} (X_{i_{s}}) \bar{z}_{k} (X_{i_{s}})^{\top} - \Omega \right] \omegahat^{-1} \\
\left[ \left( \Omega - \omegahat \right) \omegahat^{-1} \right]^{j - t_{\cp}} \left[ \bar{z}_{k} (X) \epsb \right]_{i_{2}}
\end{array} \right\}
\end{align*}
We have, by Lemma~\ref{lemma_general_second_moment},
\begin{align*}
& \var_{\theta} [R_{\cp \cdot}] \\
& \leq \frac{C (2 j^{2})^{\cp}}{n^{\cp}} \Vert \epsb^{2} \epsp^{2} \Vert_{\infty} \Etheta \left[ \left\{ \begin{array}{c}
\bar{z}_{k} (X_{i_{1}})^{\top} \omegahat^{-1} \prod\limits_{s = 3}^{\cp} \left[ \left( \Omega - \omegahat \right) \omegahat^{-1} \right]^{t_{s} - t_{s - 1} - 1} \bar{z}_{k} (X_{i_{s}}) \bar{z}_{k} (X_{i_{s}})^{\top} \omegahat^{-1} \\
\left[ \left( \Omega - \omegahat \right) \omegahat^{-1} \right]^{j - t_{\cp}} \bar{z}_{k} (X_{i_{2}})
\end{array} \right\}^{2} \right] \\
& \leq \frac{C (2 j^{2})^{\cp}}{n} \left( \frac{C' k}{n} \right)^{\cp - 1} \Vert \epsb^{2} \epsp^{2} \Vert_{\infty} (C' \BL_{2, \omegahat, k})^{2 (j - \cp)}.
\end{align*}

For any of the next $\binom{j - 2}{\cp - 1}$ terms, it must be of the following form: denote the indices of the $\cp$ elements that are conditioned on as $t_{1} = 1$, and $\{t_{2}, \ldots, t_{\cp}\} \subseteq \{3, \ldots, j\}$.
\begin{align*}
R_{\cp \cdot} = \left\{ \begin{array}{c}
\ \left[ \epsb \bar{z}_{k} (X) \right]_{i_{1}}^{\top} \omegahat^{-1} \\
\times \ \prod\limits_{s = 2}^{\cp} \left[ \left( \Omega - \omegahat \right) \omegahat^{-1} \right]^{t_{s} - t_{s - 1} - 1} \left[ \vert H_{1, i_{s}} \vert \bar{z}_{k} (X_{i_{s}}) \bar{z}_{k} (X_{i_{s}})^{\top} - \Omega \right] \omegahat^{-1} \\
\left[ \left( \Omega - \omegahat \right) \omegahat^{-1} \right]^{j - t_{\cp}} \Etheta \left[ \bar{z}_{k} (X) \epsb \right]
\end{array} \right\}
\end{align*}
We have, by Lemma~\ref{lemma_general_second_moment},
\begin{align*}
\var_{\theta} [R_{\cp \cdot}] & \leq \frac{C (2 j^{2})^{\cp}}{n^{\cp}} \Vert \epsb^{2} \Vert_{\infty} \Etheta \left[ \left\{ \begin{array}{c}
\bar{z}_{k} (X_{i_{1}})^{\top} \omegahat^{-1} \prod\limits_{s = 2}^{\cp} \left[ \left( \Omega - \omegahat \right) \omegahat^{-1} \right]^{t_{s} - t_{s - 1} - 1} \bar{z}_{k} (X_{i_{s}}) \bar{z}_{k} (X_{i_{s}})^{\top} \omegahat^{-1} \\
\left[ \left( \Omega - \omegahat \right) \omegahat^{-1} \right]^{j - t_{\cp}} \Etheta \left[ \bar{z}_{k} (X) \epsb \right]
\end{array} \right\}^{2} \right] \\
& \leq \frac{C (2 j^{2})^{\cp}}{n} \left( \frac{C' k}{n} \right)^{\cp - 1} \Vert \epsb^{2} \Vert_{\infty} \BL_{2, \hat{b}, k}^{2} (C' \BL_{2, \omegahat, k})^{2 (j - 1 - \cp)}.
\end{align*}

By symmetry, for any of the next $\binom{j - 2}{\cp - 1}$ terms, we also have
\begin{align*}
\var_{\theta} [R_{\cp \cdot}] & \leq \frac{C (2 j^{2})^{\cp}}{n} \left( \frac{C' k}{n} \right)^{\cp - 1} \Vert \epsp^{2} \Vert_{\infty} \BL_{2, \hat{p}, k}^{2} (C' \BL_{2, \omegahat, k})^{2 (j - 1 - \cp)}.
\end{align*}
For any one of the last $\binom{j - 2}{\cp}$ terms, it has to be of the following form: denote the indices of the $\cp$ elements that are conditioned on as $\{ t_{1}, \ldots, t_{\cp} \} \subseteq \{3, \ldots, j\}$. Also define $t_{0} = 2$, and then
\begin{align*}
R_{\cp \cdot} = \left\{ \begin{array}{c}
\ \Etheta \left[ \epsb \bar{z}_{k} (X)^{\top} \right] \omegahat^{-1} \\
\times \ \prod\limits_{s = 1}^{\cp} \left[ \left( \Omega - \omegahat \right) \omegahat^{-1} \right]^{t_{s} - t_{s - 1} - 1} \left[ \vert H_{i_{s}} \vert \bar{z}_{k} (X_{i_{s}}) \bar{z}_{k} (X_{i_{s}})^{\top} - \Omega \right] \omegahat^{-1} \\
\left[ \left( \Omega - \omegahat \right) \omegahat^{-1} \right]^{j - t_{\cp}} \Etheta \left[ \bar{z}_{k} (X) \epsb \right]
\end{array} \right\}.
\end{align*}
We in turn have
\begin{align*}
& \ \var_{\theta} [R_{\cp \cdot}] \\
\leq & \ \frac{C (2 j^{2})^{\cp}}{n^{\cp}} \Etheta \left[ \left\{ \begin{array}{c}
\ \Etheta \left[ \epsb \bar{z}_{k} (X)^{\top} \right] \omegahat^{-1} \\
\times \ \prod\limits_{s = 1}^{\cp} \left[ \left( \Omega - \omegahat \right) \omegahat^{-1} \right]^{t_{s} - t_{s - 1} - 1} \bar{z}_{k} (X_{i_{s}}) \bar{z}_{k} (X_{i_{s}})^{\top} \omegahat^{-1} \\
\left[ \left( \Omega - \omegahat \right) \omegahat^{-1} \right]^{j - t_{\cp}} \Etheta \left[ \bar{z}_{k} (X) \epsp \right]
\end{array} \right\}^{2} \right] \\
\leq & \ \frac{C (2 j^{2})^{\cp}}{n^{\cp}} \underbrace{\Etheta \left[ \left\{ \begin{array}{c}
\Etheta \left[ \epsb \bar{z}_{k} (X)^{\top} \right] \omegahat^{-1} \left[ \left( \Omega - \omegahat \right) \omegahat^{-1} \right]^{t_{1} - 3} \bar{z}_{k} (X_{i_{1}}) \bar{z}_{k} (X_{i_{1}})^{\top} \omegahat^{-1} \\
\ \left[ \prod\limits_{s = 2}^{\cp - 1} \left[ \left( \Omega - \omegahat \right) \omegahat^{-1} \right]^{t_{s} - t_{s - 1} - 1} \bar{z}_{k} (X_{i_{s}}) \bar{z}_{k} (X_{i_{s}})^{\top} \omegahat^{-1} \right] \left[ \left( \Omega - \omegahat \right) \omegahat^{-1} \right]^{t_{\cp} - t_{\cp - 1} - 1} \bar{z}_{k} (X_{i_{\cp}}) 
\end{array} \right\}^{2} \right]}_{V_{1}} \\
& \times \underbrace{\left\Vert \bar{z}_{k} (X_{i_{\ell}})^{\top} \omegahat^{-1} \left[ \left( \Omega - \omegahat \right) \omegahat^{-1} \right]^{j - t_{\cp}} \Etheta \left[ \bar{z}_{k} (X) \epsp \right] \right\Vert_{\infty}^{2}}_{V_{2}}.
\end{align*}
By Lemma~\ref{lem:supnorm},
$$
V_{2} \leq C \BL_{\infty, \hat{p}, k}^{2} (C' \BL_{2, \omegahat, k})^{2 (j - t_{\cp})}.
$$
By Lemma~\ref{lemma_general_second_moment},
\begin{align*}
V_{1} \leq C (C' k)^{\cp - 1} \BL_{2, \hat{b}, k}^{2} (C' \BL_{2, \omegahat, k})^{2 (t_{\cp} - 2 - \cp)}.
\end{align*}
Combining the above bounds on $V_{1}$ and $V_{2}$ and by symmetry, we have
\begin{align*}
\var_{\theta} [R_{\cp \cdot}] \leq \frac{C (2 j^{2})^{\cp}}{n} \left( \frac{C' k}{n} \right)^{\cp - 1} \min\limits_{(\eta, \zeta): 1 / \eta + 1 / \zeta = 1} \BL_{2 \eta, \hat{b}, k}^{2} \BL_{2 \zeta, \hat{p}, k}^{2} \cdot (C' \BL_{2, \omegahat, k})^{2 (j - 2 - \cp) \vee 0}.
\end{align*}


Then
\begin{align*}
S_{\cp} & \leq \frac{C (2 j^{2})^{\cp}}{n} \left( \frac{C' k}{n} \right)^{\cp - 1} \left( \begin{array}{c}
\BL_{2, \omegahat, k}^{4} + \BL_{2, \hat{b}, k}^{2} \BL_{2, \omegahat, k}^{2} + \BL_{2, \hat{p}, k}^{2} \BL_{2, \omegahat, k}^{2} \\
+ \min\limits_{(\eta, \zeta): 1 / \eta + 1 / \zeta = 1} \BL_{2 \eta, \hat{b}, k}^{2} \BL_{2 \zeta, \hat{p}, k}^{2}
\end{array} \right) \left( C' \BL_{2, \omegahat, k} \right)^{2 (j - 2 - \cp) \vee 0}.
\end{align*}

For $\cp = j$, we have
\begin{align*}
S_{j} & \leq \frac{C (2 j^{2})^{j}}{n^{j}} \Etheta \left[ \varepsilon_{\hat{b}, 1}^{2} \varepsilon_{\hat{p}, 2}^{2} \left( \bar{z}_{k} (X_{1})^{\top} \omegahat^{-1} \left\{ \prod_{\ell = 3}^{j} \bar{z}_{k} (X_{\ell}) \bar{z}_{k} (X_{\ell})^{\top} \omegahat^{-1} \right\} \bar{z}_{k} (X_{2}) \right)^{2} \right] \\
& \leq \frac{C}{n} (j^{2})^{j} \left( \frac{C' k}{n} \right)^{j - 1} \Vert \varepsilon_{\hat{b}, 1}^{2} \varepsilon_{\hat{p}, 2}^{2} \Vert_{\infty}.
\end{align*}

Finally, summarizing the above calculations, we have
\begin{align*}
& \ \var_{\theta} \left(\mathbb{U}_n(\IF_{j,j,k,\overline{i}_{j}})\right) \\
\leq & \ \frac{C j^{2}}{n} \left( C' \BL_{2, \omegahat, k} \right)^{2 (j - 3)} \left( \BL_{2, \hat{b}, k}^{2} \BL_{2, \omegahat, k}^{2} + \BL_{2, \hat{p}, k}^{2} \BL_{2, \omegahat, k}^{2} + \min\limits_{(\eta, \zeta): 1 / \eta + 1 / \zeta = 1} \BL_{2 \eta, \hat{p}, k}^{2} \BL_{2 \zeta, \hat{b}, k}^{2} \right) \\
& + \frac{C}{n} \sum_{\ell = 2}^{j - 1} (j^{2})^{\ell} \left( \frac{C' k}{n} \right)^{\ell - 1} \left( C' \BL_{2, \omegahat, k} \right)^{2 (j - 2 - \ell) \vee 0} \left( \begin{array}{c}
\BL_{2, \omegahat, k}^{4} + \BL_{2, \hat{b}, k}^{2} \BL_{2, \omegahat, k}^{2} + \BL_{2, \hat{p}, k}^{2} \BL_{2, \omegahat, k}^{2} \\
+ \min\limits_{(\eta, \zeta): 1 / \eta + 1 / \zeta = 1} \BL_{2 \eta, \hat{p}, k}^{2} \BL_{2 \zeta, \hat{b}, k}^{2} 
\end{array} \right) \\
& + \frac{C}{n} (j^{2})^{j} \left( \frac{C' k}{n} \right)^{j - 1}.
\end{align*}
\end{proof}

\begin{remark}
\label{rem:cond-sw2}
Similar to the calculations in Remark~\ref{rem:cond-sw1}, if we do not have Condition~\ref{stable}, we will have instead, for any one of the last $\binom{j - 2}{\ell}$ terms,
\begin{align*}
\var_{\theta} [R_{\ell \cdot}] \leq \frac{C}{n} \left( \frac{k}{n} \right)^{\ell - 1} k \BL_{2, \hat{b}, k}^{2} \BL_{2, \hat{p}, k}^{2} \BL_{2, \omegahat, k}^{2 (j - 2 - \ell) \vee 0},
\end{align*}
which in turn gives us
\begin{align*}
& \var_{\theta} \left[ \frac{1}{\binom{j}{\ell}} \sum_{h = 1}^{\binom{j}{\ell}} R_{\ell h} \right] \leq \max_{1 \leq h \leq {\binom{j}{\ell}}} \var_{\theta} [R_{\ell h}] \\
& \leq \frac{C}{n} \left( \frac{k}{n} \right)^{\ell - 1} \left( \BL_{2, \omegahat, k}^{4} + \BL_{2, \hat{b}, k}^{2} \BL_{2, \omegahat, k}^{2} + \BL_{2, \hat{p}, k}^{2} \BL_{2, \omegahat, k}^{2} + k \BL_{2, \hat{b}, k}^{2} \BL_{2, \hat{p}, k}^{2} \right) \left( \BL_{2, \omegahat, k} \right)^{2 (j - \ell - 2) \vee 0},
\end{align*}
and thus
\begin{align*}
& \ \var_{\theta} \left(\mathbb{U}_n(\IF_{j,j,k,\overline{i}_{j}})\right) = \var_{\theta} \left[ \frac{1}{j} \sum_{h = 1}^{j} R_{1 h} \right] + \sum_{\ell = 2}^{j - 1} \var_{\theta} \left[ \frac{1}{\binom{j}{\ell}} \sum_{h = 1}^{\binom{j}{\ell}} R_{\ell h} \right] + \var_{\theta} [R_{j}] \\
\leq & \ \frac{C}{n} \left( \BL_{2, \omegahat, k} \right)^{2 (j - 3)} \left( \BL_{2, \hat{b}, k}^{2} \BL_{2, \omegahat, k}^{2} + \BL_{2, \hat{p}, k}^{2} \BL_{2, \omegahat, k}^{2} + k \BL_{2, \hat{b}, k} \BL_{2, \hat{p}, k} \right) \\
& + \frac{C}{n} \sum_{\ell = 2}^{j - 1} \left( \frac{k}{n} \right)^{\ell - 1} \left( \BL_{2, \omegahat, k} \right)^{2 (j - \ell - 2) \vee 0} \left( \BL_{2, \omegahat, k}^{4} + \BL_{2, \hat{b}, k}^{2} \BL_{2, \omegahat, k}^{2} + \BL_{2, \hat{p}, k}^{2} \BL_{2, \omegahat, k}^{2} + k \BL_{2, \hat{b}, k}^{2} \BL_{2, \hat{p}, k}^{2} \right) \\
& + \frac{C}{n} \left( \frac{k}{n} \right)^{j - 1}.
\end{align*}
\end{remark}

\section{Technical Lemmas}
\label{section_lemmas} 
In this section we collect some technical lemmas that we use in our proofs.

First, we prove the following $L_{\infty}$ norm control (Condition~\ref{stable}) of series projection estimators for Daubechies wavelets, B-splines and local polynomial partition series \citep{huang2003local, belloni2015some, chen2015optimal, chen2018optimal, cattaneo2020large}. All these three types of basis functions are local bases in the following sense. Let $x = (x_1, \cdots, x_d)^{\top} \in [0, 1]^{d}$.

\begin{lemma}\label{lem:supnorm}
For any function $h \in L_{2} (\P_{\theta})$, denote its $L_{\infty}$ norm as $\Vert h \Vert_{\infty}$. Assume that the density $f_{X} (\cdot)$ of $X$ is bounded between $\sigma_{f} \leq f_{X} (x) \leq \sigma^{f}, \forall x \in [0, 1]^{d}$ for some fixed constants such that $\infty > \sigma^{f} > \sigma_{f} > 0$. When $\bar{z}_{k}$ is Daubechies wavelets, B-spline (with equal-spaced knots) or local polynomial partition series (with intervals of sizes of the same order) with resolution $\lceil \log_{2} (k) \rceil$, for any $k \times k$ matrix $\Sigma$ with operator norm bounded by some constant $M > 0$, we have the following:
\begin{align*}
\left\Vert \bar{z}_{k} (\cdot)^{\top} \Sigma \Etheta \left[ \bar{z}_{k} (X) h(X) \right] \right\Vert_{\infty} \lesssim M \Vert h \Vert_{\infty}
\end{align*}
\end{lemma}

\begin{proof}
Denote the $(i, j)$-th elements of $\Sigma$ as $\sigma_{ij}$. Because $\Vert \Sigma \Vert_{\rm op} \leq M$, $\vert \sigma_{i j} \vert \leq M$ for all $i, j = 1, \ldots, k$. Here, $k^{-1}$ corresponds to the order of the maximum size of the support of each $z_{i}$ in $\bar{z}_{k}$. Given any $x \in [0, 1]^{d}$, denote $I_{x} \coloneqq \{ I \subset \{1, \ldots, k\}: z_{j} (x) \neq 0 \text{ if } j \in I \}$. In particular, for (Daubechies) wavelets with father wavelets/scaling functions, B-splines and local polynomial partition series, $\vert I_{x} \vert \leq k_{0}$ for some fixed integer $k_{0}$ that, unlike $k$, does not depend on $n$. When $d = 1$, the above statement holds for some fixed $j_{0}$, because for all three types of basis functions, each $x \in [0, 1]$ belongs to the supports of at most a constant number of $z_{j}$'s for $j = 1, \cdots, k$ \citep{belloni2015some}. When $d > 1$, since the corresponding basis functions are formed by taking the tensor product of the basis functions for each dimension, we have $k_{0} = j_{0}^{d}$. Both $j_{0}$ and $d$ are fixed in our setting, so $k_{0}$ is also fixed.

Similarly, for each $i \in I_{x}$, $\sum_{j = 1}^{k} z_{i} (x) z_{j} (x')$ also contains at most $O (k_{0})$ nonzero summands for any $x' \in [0, 1]^{d}$. We denote the set of $x' \in [0, 1]^{d}$ such that $z_{i} (x) z_{j} (x') \neq 0$ as $J_{x}^{(j)}$. When $f_{X}$ is bounded between $\sigma^{f} > \sigma_{f}$ such that $\infty > \sigma^{f} > \sigma_{f} > 0$, we have that 
\begin{equation}
\label{eq:prob_bound}
\P_{f_{X}} [ X \in J_{x}^{(j)} ] \lesssim \frac{1}{k},
\end{equation}
because when $i = j$, the probability that $X$ lies in $J_{x}^{(j)}$ should be the largest, and by construction, the support of each $z_{i}$ is at most of order $k^{-1}$. To see the above claims for wavelets, let $\varphi$ be the compactly-supported father wavelet/scaling function with certain regularity (see the description after Definition~\ref{def_Holder}). We then dilate and shift the scaling function $\varphi$ at level $j = \log_{2} (k) / d$ for each dimension of $X$ as $\{2^{j / 2} \varphi (2^{j} x_{m} - \ell), \ell = 0, 1, \cdots, 2^{j}\}$ for $m = 1, \cdots, d$ and then take the tensor product over $d$ dimensions to eventually form the basis functions $\bar{z}_{k}$. Since $\varphi$ is compactly supported, each $x \in [0, 1]^{d}$ is in the support of at most $k_{0}$ basis functions in $\bar{z}_{k}$ for some bounded $k_{0}$. By construction, the support of each basis function $z_{i}$ in $\bar{z}_{k}$ also has size of order $k^{-1}$. As for B-splines, when $d = 1$, B-splines are local in the sense that B-spline basis $z_{j}$ is supported on the interval $[l_{j (1)}, l_{j(2)}]$ for some $j (1)$ and $j (2)$ satisfying $j (2) - j (1) \lesssim 1$, where $l_{1}, \cdots, l_{O (k)}$ are equally knots. There are at most $k_{0}$ non-zero B-splines on each interval of size $O (1 / k)$ defined by the knots. For $d > 1$, the resulting B-spline basis functions are also tensor products of each dimension. From this property of
B-splines, it is easy to see that B-spline series satisfy \eqref{eq:prob_bound}. In terms of local polynomial partition series, as in Example~3.5 of \citet{belloni2015some}, when $d = 1$, $[0, 1]$ is again partitioned into $k$ intervals, each with size of order $k^{-1}$ and each basis function $z_{j}$ is supported in at most $k_{0}$ many intervals. Therefore, \eqref{eq:prob_bound} is again met for general $d$ by taking the tensor product of local polynomial partition series constructed as above for each dimension.

Then
\begin{align*}
& \ \left\vert \bar{z}_{k} (x)^{\top} \Sigma \Etheta \left[ \bar{z}_{k} (X) h(X) \right] \right\vert \\
= & \ \left\vert \Etheta \left[ \bar{z}_{k} (x)^{\top} \Sigma \bar{z}_{k} (X) h(X) \right] \right\vert \\
= & \ \left\vert \Etheta \left[ \sum_{i = 1}^{k} \sum_{j = 1}^{k} \sigma_{i j} z_{i} (x) z_{j} (X) h(X) \right] \right\vert \\
\leq & \ \sum_{i \in I_{x}} \left\vert \sum_{j = 1}^{k} \int \sigma_{i j} z_{i} (x) z_{j} (x') h(x') f_{X} (x') \diff x' \right\vert \\
\lesssim & \ \sum_{i \in I_{x}} \left\vert \sup_{x' \in [0, 1]^{d}} \left\vert \sum_{j = 1}^{k} \sigma_{i j} z_{i} (x) z_{j} (x') h(x') \right\vert \frac{1}{k} \right\vert \\
\leq & \ \frac{1}{k} \sum_{i \in I_{x}} \left\vert \sup_{x' \in [0, 1]^{d}} \sum_{j \in I_{x'}} \sigma_{i j} z_{i} (x) z_{j} (x') h(x') \right\vert \\
\lesssim & \ \frac{1}{k} \sum_{i \in I_{x}} \sup_{x' \in [0, 1]^{d}} \sum_{j \in I_{x'}} |\sigma_{i j} z_{i} (x) z_{j} (x') h(x')| \\
\leq & \ \frac{1}{k} k_{0}^{2} M k \Vert h \Vert_{\infty} = M k_{0}^{2} \Vert h \Vert_{\infty},
\end{align*}
where we use \Holder{} inequality and \eqref{eq:prob_bound} in the second inequality in the above display and the last inequality follows because at level $k$, $\sup_{x, x'} z_{i} (x) z_{j} (x') \lesssim k$ again for wavelets, B-splines and local polynomial partition series \citep[Examples 3.3--3.5]{belloni2015some}.

\end{proof}

The following lemma regarding the operator norm rate of convergence of the sample Gram matrix is used to establish the results in Corollary~\ref{theorem_semipar_eff_emp}.

\begin{lemma}[\cite{rudelson1999random} or Theorem 5.44 of \citet{vershynin2010introduction}]\label{lemma_spectral_bound_rudelson} 
Let $Q_{1}, \ldots, Q_{n}$ be a sequence of independent symmetric non-negative $k \times k$-matrix valued random variables with $k \geq 2$ such that $Q = \frac{1}{n} \sumin \E (Q_{i})$ and $\sup\limits_{i = 1, \ldots, n} \|Q_{i}\|_{\rm op} \leq M$ a.s. where $\| \cdot \|_{\rm op}$ denotes the operator norm of a matrix. Then for $\hat{Q} = \frac{1}{n} \sumin Q_{i}$ and a constant $C > 0$
\begin{align*}
\E \| \hat{Q} - Q \|_{\rm op} \leq C \left( \frac{M \log k}{n} + \sqrt{\frac{M \|Q\|_{\rm op} \log k}{n}} \right).
\end{align*}
Following Theorem 5.44 of \citet{vershynin2010introduction}, with probability converging to 1 as $n \rightarrow \infty$,
\begin{align*}
\| \hat{Q} - Q \|_{\rm op} \leq C \left( \frac{M \log k}{n} + \sqrt{\frac{M \|Q\|_{\rm op} \log k}{n}} \right).
\end{align*}
\end{lemma}

\begin{remark}
Had $\bar{z}_{k} (X)$ satisfied certain light-tail or bounded higher-order moments assumption, results from \cite{vershynin2012close} and \cite{koltchinskii2017concentration} could help get rid of the extra $\log k$ factor in Lemma~\ref{lemma_spectral_bound_rudelson}. However, since $\bar{z}_{k}$'s are wavelets or B-spline transformations in our paper, neither light-tail nor bounded higher-order moments assumption is satisfied. It is unclear how to get rid of the $\log k$ factor in our context and results from \cite{vershynin2012close} and \cite{koltchinskii2017concentration} do not immediately apply.
\end{remark}


The following lemma is the main technical result used to control the variance bound, in particular Lemma~\ref{lem:var_ifjj}. 
\begin{lemma}\label{lemma_general_second_moment} For any given sequences of $k\times k$ matrices $M_0,M_1,\ldots,M_{l}$, with $l \ge 2$, one has for a constant $C$ depending on the choice of basis functions
	\begin{align} 
	\Etheta \left( \left\{ 
	\begin{array}{c}
		\left[ \bar{Z}_{k}^{\top}\right]_{0} M_0 \times \\ 
		\prod\limits_{r=1}^{l-1}\left[M_r[H_1\zk\zk^\top]_{r}
		\right] \\ 
		\times M_l\left[ \bar{Z}_{k}\right]_{l}
	\end{array}
	\right\}\right)^2 \nonumber \leq\left(\| H_1^{2} \|_{\infty}^{l - 1} \lambda_{\max} \left( \Etheta^{l} \left[ \zk\zk^\top \right] \right) \prod\limits_{r=0}^l(\lambda_{\max}(M_r))^2\right) {(Ck)}^{l}, \nonumber
	\end{align}
	where the expectation is taken over the distribution of $X_1,\ldots,X_l$ with $M_0,\ldots,M_l$ treated as fixed.
\end{lemma}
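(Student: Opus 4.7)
}

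The key observation is that every middle factor $(H_1\zk\zk^T)_{r+2}$ is a rank-one matrix (a scalar $H_1(W_{r+2})$ times the outer product $\zk(X_{r+2})\zk(X_{r+2})^T$). Exploiting this rank-one structure, the bracketed expression inside the square is \emph{not} a quadratic form in a single high-dimensional vector but rather a product of scalars and bilinear forms between \emph{pairs} of independent basis vectors. The plan is to make this factorization explicit, then hit each bilinear factor with $|u^T A v|\le \|A\|_{op}\|u\|\|v\|$, and finally take expectations using independence.

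First I would rewrite
\begin{equation*}
S\;=\;\Bigl(\prod_{s=3}^{l+1} H_1(W_s)\Bigr)\cdot \zk(X_1)^T (M_0 M_1) \zk(X_3)\;\cdot\;\prod_{r=2}^{l-1}\zk(X_{r+1})^T M_r \zk(X_{r+2})\;\cdot\;\zk(X_{l+1})^T M_l \zk(X_2),
\end{equation*}
so that the quantity in the statement is exactly $\Etheta[S^2]$. Next, applying $|u^T A v|\le \|A\|_{op}\|u\|\|v\|$ to each bilinear factor and bounding $\|M_0M_1\|_{op}\le \|M_0\|_{op}\|M_1\|_{op}$, I get
\begin{equation*}
|S|\;\le\;\|H_1\|_\infty^{\,l-1}\prod_{r=0}^{l}\|M_r\|_{op}\cdot\|\zk(X_1)\|\,\|\zk(X_2)\|\prod_{j=3}^{l+1}\|\zk(X_j)\|^{2}.
\end{equation*}

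Squaring and invoking the mutual independence of $X_1,\dots,X_{l+1}$, the expectation factorizes as
\begin{equation*}
\Etheta[S^2]\;\le\;\|H_1\|_\infty^{\,2(l-1)}\prod_{r=0}^{l}\|M_r\|_{op}^{2}\;\cdot\;\bigl(\Etheta\|\zk(X)\|^2\bigr)^{2}\cdot\bigl(\Etheta\|\zk(X)\|^4\bigr)^{l-1}.
\end{equation*}
To convert the fourth-moment terms into $\lambda_{\max}(\zk\zk^T)=\|\zk\|^2$ expectations while picking up the required powers of $k$, I would use the uniform bound $\sup_x\|\zk(x)\|^2\le Bk$ from A.1 of Condition (B): this gives $\Etheta\|\zk\|^4\le Bk\cdot\Etheta\|\zk\|^2$, so that $(\Etheta\|\zk\|^4)^{l-1}\le (Bk)^{l-1}(\Etheta\|\zk\|^2)^{l-1}$. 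Substituting and using $\|\zk\|^2=\lambda_{\max}(\zk\zk^T)$ yields the claimed bound, with the basis-dependent constant $B^{l-1}$ absorbed into the implicit constant.

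The proof is largely routine once the rank-one observation in Step 1 is made; the only mild care needed is in the exact bookkeeping of powers of $\|H_1\|_\infty$, $k$, and the expectations of $\|\zk\|^2$, and in noting that for the symmetric matrices $M_r$ that actually appear in the application of this lemma (products of $\omegahat^{-1}$ and $\Omega-\omegahat$) the operator norm agrees with $\max_i|\lambda_i|$ as used in the statement. No obstacle beyond this accounting is expected.
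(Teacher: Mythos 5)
Your proof is correct and follows essentially the same route as the paper, whose own proof is only a two-line sketch ("writing out the expectation as a multiple integral" plus "repeated use of the variational formula of operator norm" via Lemma 12.4 of Robins et al.\ 2016) — precisely the rank-one factorization into bilinear forms and the bound $|u^TAv|\le\|A\|_{op}\|u\|\,\|v\|$ that you make explicit. The exponents you end up with ($\|H_1\|_\infty^{2(l-1)}$ and $B^{l-1}(\Etheta\|\zk\|^2)^{l+1}k^{l-1}$ in place of $\|H_1\|_\infty^{l}(\Etheta\|\zk\|^2)^{l}k^{l+1}$) differ slightly from the literal statement, but since $\Etheta\|\zk\|^2\le Bk$ your bound implies the stated one up to a basis-dependent factor $B^{l}$, which is consistent with the paper's own loose bookkeeping here (its displayed bound carries $l$ powers of $\|H_1\|_\infty$ for only $l-1$ factors of $H_1$, and the advertised constant $C$ never appears) and is harmless because everything is ultimately absorbed into $c^{l+2}$ in the variance bound of Theorem 3.
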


\begin{proof}
The proof follows by writing out the expectation as a multiple integral and then arguing as Lemma 13.4 of \cite{robins2017minimax} in conjunction with repeated use of the variational formula of the largest eigenvalue of a matrix.
\allowdisplaybreaks
Denote $\Omega \coloneqq \Etheta \left[ \zk \zk^\top \right]$.
\begin{align*}
& \; \Etheta \left( \left\{ 
	\begin{array}{c}
		\left[ \bar{Z}_{k}^{\top} \right]_{0} M_{0} \times \\ 
		\prod\limits_{r = 1}^{l - 1} \left[ M_{r} [H_1 \zk\zk^{\top}]_{r} 
		\right] \\ 
		\times M_l \left[ \bar{Z}_{k} \right]_{l}
	\end{array}
	\right\}\right)^2 \\
= & \; \Etheta \left( \left[ \bar{Z}_{k}^{\top} \right]_0 M_0 \cdot \prod\limits_{r=1}^{l-1}\left[M_r[H_1\zk\zk^\top]_{r}\right] \cdot M_l\left[ \bar{Z}_{k} \bar{Z}_{k}^\top \right]_{l} M_l^\top \cdot \prod\limits_{r=1}^{l-1}\left[[H_1\zk\zk^\top]_{r} M_r^\top\right] \cdot M_0^\top \left[ \bar{Z}_{k} \right]_0 \right) \\
= & \; \Etheta \left( \left[ \bar{Z}_{k}^{\top} \right]_0 M_0 \cdot \prod\limits_{r=1}^{l-1}\left[M_r[H_1\zk\zk^\top]_{r}\right] \cdot M_l \Omega M_l^\top \cdot \prod\limits_{r=1}^{l-1}\left[[H_1\zk\zk^\top]_{r} M_r^\top\right] \cdot M_0^\top \left[ \bar{Z}_{k} \right]_0 \right) \\
\le & \; \lambda_{\max}^2 \left( M_l \right) \lambda_{\max} \left( \Omega \right) \Etheta \left( \begin{array}{c}
\left[ \bar{Z}_{k}^{\top}\right]_0 M_0 \cdot \prod\limits_{r=1}^{l-2} \left[ M_r[H_1\zk\zk^\top]_{r} \right] M_{l - 1} \left[ H_1^2 \zk \zk^\top \zk \zk^\top \right]_{l - 1} M_{l - 1}^\top \\
\times \prod\limits_{r=1}^{l-2} \left[ [H_1\zk\zk^\top]_{r} M_r^\top \right] \cdot M_0^\top \left[ \bar{Z}_{k} \right]_0
\end{array} \right) \\
\le & \; \lambda_{\max}^2 \left( M_l \right) \lambda_{\max} \left( \Omega \right) \Vert H_1^2 \zk^\top \zk \Vert_\infty \Etheta \left( \begin{array}{c}
\left[ \bar{Z}_{k}^{\top} \right]_0 M_0 \cdot \prod\limits_{r=1}^{l-2} \left[ M_r[H_1\zk\zk^\top]_{r} \right] M_{l - 1} \left[ \zk \zk^\top \right]_{l - 1} M_{l - 1}^\top \\
\times \prod\limits_{r=1}^{l-2} \left[ [H_1 \zk\zk^\top]_{r} M_r^\top \right] \cdot M_0^\top \left[ \bar{Z}_{k} \right]_0
\end{array} \right) \\
= & \; \lambda_{\max}^2 \left( M_l \right) \lambda_{\max} \left( \Omega \right) \Vert H_1^2 \zk^\top \zk \Vert_\infty \Etheta \left( \begin{array}{c}
\left[ \bar{Z}_{k}^{\top}\right]_0 M_0 \cdot \prod\limits_{r=1}^{l-2} \left[ M_r[H_1\zk\zk^\top]_{r} \right] M_{l - 1} \Omega M_{l - 1}^\top \\
\times \prod\limits_{r=1}^{l-2} \left[ [H_1 \zk\zk^\top]_{r} M_r^\top \right] \cdot M_0^\top \left[ \bar{Z}_{k} \right]_0
\end{array} \right) \\
\le & \; \lambda_{\max}^2 \left( M_l \right) \lambda_{\max}^2 \left( M_{l - 1} \right) \cdot \lambda_{\max}^2 \left( \Omega \right) \Vert H_1^2 \zk^\top \zk \Vert_\infty \Etheta \left( \begin{array}{c}
\left[ \bar{Z}_{k}^{\top}\right]_0 M_0 \cdot \prod\limits_{r=1}^{l-2} \left[ M_r [H_1\zk\zk^\top]_{r} \right] \\
\times \prod\limits_{r=1}^{l-2} \left[ [H_1 \zk\zk^\top]_{r} M_r^\top \right] \cdot M_0^\top \left[ \bar{Z}_{k} \right]_0
\end{array} \right) \\
\overset{(*)}{\le} & \; \prod_{r = 1}^l \lambda_{\max}^2 \left( M_r \right) \cdot \lambda_{\max}^l \left( \Omega \right) \cdot \Vert H_1^2 \zk^\top \zk \Vert_\infty^{l - 1} \cdot \Etheta \left( \left[ \bar{Z}_{k}^{\top} M_0 M_0^\top \bar{Z}_{k} \right]_0 \right) \\
\le & \; \prod_{r = 0}^l \lambda_{\max}^2 \left( M_r \right) \cdot \lambda_{\max}^l \left( \Omega \right) \cdot \Vert H_1^2 \Vert_\infty^{l - 1} \cdot \left( C_1 k \right)^{l - 1} \cdot \Etheta \left( \zk^\top \zk \right) \\
= & \; \prod_{r = 0}^l \lambda_{\max}^2 \left( M_r \right) \cdot \lambda_{\max}^l \left( \Omega \right) \cdot \Vert H_1^2 \Vert_\infty^{l - 1} \cdot \left( C_1 k \right)^{l - 1} \cdot C_2 k \\
\le & \; \Vert H_1^2 \Vert_\infty^{l - 1} \cdot \lambda_{\max}^l \left( \Omega \right) \cdot \prod_{r = 0}^l \lambda_{\max}^2 \left( M_r \right) \cdot \left( C k \right)^l,
\end{align*}
where in (*) we iteratively upper bound $M_r \left[ H_1^2 \zk \zk^\top \zk \zk^\top \right]_r M_r^\top$ by $\Vert H_1^2 \zk^\top \zk \Vert_\infty \cdot M_r \left[ \zk \zk^\top \right]_r M_r^\top$ and take expectation over the $r$-th subject for $r = 1, \dots, l - 2$.
\end{proof}


Finally, we have the following bound on $\BL_{2, \omegahat^{\rm ac}, k} = \Vert \Omega - \hat{\Omega}^{\rm ac} \Vert_{\rm op}$:
\begin{lemma}\label{lemma_op_ac}
Under Condition~\ref{def_conditions},
\begin{align*}
\BL_{2, \omegahat^{\rm ac}, k} = \Vert \Omega - \hat{\Omega}^{\rm ac} \Vert_{\rm op} \leq \Vert \Omega \Vert_{\rm op} \left\Vert \frac{g - \hat{g}}{g} \right\Vert_{\infty}.
\end{align*}
\end{lemma}

\begin{proof}
\begin{align*}
\BL_{2, \omegahat^{\rm ac}, k} & = \sup_{y: \Vert y \Vert_{2} \leq 1} \left\vert \int y^{\top} \bar{z}_{k} (x) \bar{z}_{k} (x)^{\top} y g(x) \frac{g(x) - \hat{g}(x)}{g(x)} \diff x \right\vert \\
& \leq \sup_{y: \Vert y \Vert_{2} \leq 1} \int y^{\top} \bar{z}_{k} (x) \bar{z}_{k} (x)^{\top} y g(x) \left\vert \frac{g(x) - \hat{g}(x)}{g(x)} \right\vert \diff x \\
& \leq \sup_{y: \Vert y \Vert_{2} \leq 1} \int y^{\top} \bar{z}_{k} (x) \bar{z}_{k} (x)^{\top} y g(x) \diff x \left\Vert \frac{g - \hat{g}}{g} \right\Vert_{\infty} \\
& = \Vert \Omega \Vert_{\rm op} \left\Vert \frac{g - \hat{g}}{g} \right\Vert_{\infty}.
\end{align*}
\end{proof}

\section{Adaptive consistent estimators for the nuisance functions}\label{app:adapt}
In this section, we construct adaptive consistent estimators for \Holder{} nuisance functions, which is sufficient for achieving semiparametric efficiency using the empirical HOIF estimators developed in this paper, as can be seen in Corollary~\ref{theorem_semipar_eff_emp}. Without loss of generality, we will construct an adaptive consistent estimator for the nuisance function $\pi (x) = \mathbb{E}_{\theta} [A | X = x]$. In particular, for any $\pi \in H (\beta, C)$, we will construct an estimator $\hat{\pi} (x) \in H (\beta, C)$ such that $\Vert \hat{\pi} - \pi \Vert_{2} = o_{\mathbb{P}_{\theta}} (1)$ without knowing $\beta$ explicitly. $\hat{\pi} (x)$ is of the following form:
\begin{align*}
& \hat{\pi} (x) = \bar{z}_{k^{\dag}} (x)^{\top} \hat{\alpha}_{k^{\dag}} \\
& \text{where } \hat{\alpha}_{k^{\dag}} \coloneqq \left\{ \frac{1}{n_{\rm tr}} \sum_{i = 1}^{n_{\rm tr}} \bar{z}_{k^{\dag}} (X_{i}) \bar{z}_{k^{\dag}} (X_{i})^{\top} \right\}^{-1} \frac{1}{n_{\rm tr}} \sum_{i = 1}^{n_{\rm tr}} A_{i} \bar{z}_{k^{\dag}} (X_{i}) \text{ is the usual OLS estimator.}
\end{align*}
Here we choose a sequence $k^{\dag} = k^{\dag} (n) \rightarrow \infty$ as $n \rightarrow \infty$ and $\bar{z}_{k^{\dag}}$ the Daubechies wavelets at resolution $\log_{2} (k^{\dag})$ \citep{belloni2015some}. For convenience, we also define
\begin{align*}
\tilde{\alpha}_{k^{\dag}} \coloneqq & \ \left\{ \mathbb{E}_{\theta} [\bar{z}_{k^{\dag}} (X) \bar{z}_{k^{\dag}} (X)^{\top}] \right\}^{-1} \frac{1}{n_{\rm tr}} \sum_{i = 1}^{n_{\rm tr}} A_{i} \bar{z}_{k^{\dag}} (X_{i}), \\
\bar{\alpha}_{k^{\dag}} \coloneqq & \ \left\{ \mathbb{E}_{\theta} [\bar{z}_{k^{\dag}} (X) \bar{z}_{k^{\dag}} (X)^{\top}] \right\}^{-1} \mathbb{E}_{\theta} [\pi (X) \bar{z}_{k^{\dag}} (X)], \\
\tilde{\pi} (x) \coloneqq & \ \tilde{\alpha}_{k^{\dag}}^{\top} \bar{z}_{k^{\dag}} (x) \text{ and } \bar{\pi} (x) \coloneqq \bar{\alpha}_{k^{\dag}}^{\top} \bar{z}_{k^{\dag}} (x).
\end{align*}
Since $\pi \in H (\beta, C)$, we immediately have $\bar{\pi} \in H (\beta, C')$ for some $C' > 0$.

Obviously, if $k^{\dag} \rightarrow \infty$, $\hat{\pi}$ is an $L_{2}$-consistent estimator for $\pi$ when $\pi \in H (\beta, C)$ for some $\beta > 0$. So we are only left to specify $k^{\dag}$ such that $\hat{\pi} \in H (\beta, C'')$ for some $C'' > 0$. Following the proof strategy in \citet[Appendix B]{mukherjee2016adaptive}, we need to show the following probability is negligible: for any positive integer $\ell \leq k^{\dag}$ and some appropriately chosen $M > 0$,
\begin{align*}
& \ \mathbb{P}_{\theta} \left( \ell^{\beta / d + 1 / 2} \Vert \langle \hat{\pi}, \bar{z}_{\ell} \rangle \Vert_{\infty} > M \right) \\
\leq & \ \mathbb{P}_{\theta} \left( \ell^{\beta / d + 1 / 2} \Vert \langle \hat{\pi}, \bar{z}_{\ell} \rangle - \langle \tilde{\pi}, \bar{z}_{\ell} \rangle \Vert_{\infty} > M / 2 \right) + \mathbb{P}_{\theta} \left( \ell^{\beta / d + 1 / 2} \Vert \langle \tilde{\pi}, \bar{z}_{\ell} \rangle \Vert_{\infty} > M / 2 \right) \\
\leq & \ \mathbb{P}_{\theta} \left( \ell^{\beta / d + 1 / 2} \Vert \langle \hat{\pi}, \bar{z}_{\ell} \rangle - \langle \tilde{\pi}, \bar{z}_{\ell} \rangle \Vert_{\infty} > M / 4 \right) + \mathbb{P}_{\theta} \left( \ell^{\beta / d + 1 / 2} \Vert \langle \tilde{\pi}, \bar{z}_{\ell} \rangle - \langle \bar{\pi}, \bar{z}_{\ell} \rangle \Vert_{\infty} > M / 2 \right) \\
& + \mathbbm{1} \left( \ell^{\beta / d + 1 / 2} \Vert \langle \bar{\pi}, \bar{z}_{\ell} \rangle \Vert_{\infty} > M / 4 \right) \\
= & \ \mathbb{P}_{\theta} \left( \ell^{\beta / d + 1 / 2} \Vert \langle \hat{\pi}, \bar{z}_{\ell} \rangle - \langle \tilde{\pi}, \bar{z}_{\ell} \rangle \Vert_{\infty} > M / 4 \right) + \mathbb{P}_{\theta} \left( \ell^{\beta / d + 1 / 2} \Vert \langle \tilde{\pi}, \bar{z}_{\ell} \rangle - \langle \bar{\pi}, \bar{z}_{\ell} \rangle \Vert_{\infty} > M / 4 \right) \\
= & \ \mathbb{P}_{\theta} \left( \ell^{\beta / d + 1 / 2} \Vert \left( \hat{\alpha}_{k^{\dag}} - \tilde{\alpha}_{k^{\dag}} \right)^{\top} \mathbb{E}_{\theta} [\bar{z}_{k^{\dag}} (X) \bar{z}_{\ell} (X)^{\top}] \Vert_{\infty} > M / 4 \right) \\
& + \mathbb{P}_{\theta} \left( \ell^{\beta / d + 1 / 2} \left\Vert \left( \frac{1}{n_{\rm tr}} \sum_{i = 1}^{n_{\rm tr}} A_{i} \bar{z}_{k^{\dag}} (X_{i}) - \mathbb{E}_{\theta} [A \bar{z}_{k^{\dag}} (X)] \right)^{\top} \left\{ \mathbb{E}_{\theta} [\bar{z}_{k^{\dag}} (X) \bar{z}_{k^{\dag}} (X)^{\top}] \right\}^{-1} \mathbb{E}_{\theta} [\bar{z}_{k^{\dag}} (X) \bar{z}_{\ell} (X)^{\top}] \right\Vert_{\infty} > M / 4 \right)
\end{align*}
where in the third line we use $\bar{\pi} \in H (\beta, C')$. Now:
\begin{itemize}
\item For the first term in the above display, with probability going to 1,
\begin{align*}
\Vert \left( \hat{\alpha}_{k^{\dag}} - \tilde{\alpha}_{k^{\dag}} \right)^{\top} \mathbb{E}_{\theta} [\bar{z}_{k^{\dag}} (X) \bar{z}_{\ell} (X)^{\top}] \Vert_{\infty} \lesssim \frac{k^{\dag} \log k^{\dag}}{n}.
\end{align*}
\item Similarly, for the second term in the above display, with probability going to 1,
\begin{align*}
\left\Vert \left( \frac{1}{n_{\rm tr}} \sum_{i = 1}^{n_{\rm tr}} A_{i} \bar{z}_{k^{\dag}} (X_{i}) - \mathbb{E}_{\theta} [A \bar{z}_{k^{\dag}} (X)] \right)^{\top} \left\{ \mathbb{E}_{\theta} [\bar{z}_{k^{\dag}} (X) \bar{z}_{k^{\dag}} (X)^{\top}] \right\}^{-1} \mathbb{E}_{\theta} [\bar{z}_{k^{\dag}} (X) \bar{z}_{\ell} (X)^{\top}] \right\Vert_{\infty} \lesssim \frac{k^{\dag}}{n}.
\end{align*}
\end{itemize}
In consequence, any $k^{\dag} \rightarrow \infty$ with a very slow rate (e.g. $k^{\dag} (n) \asymp \log n$) suffices to ensure $\hat{\pi} \in H (\beta, C'')$ with probability going to 1 for some sufficiently large constant $C'' > 0$.

\section{More Details and Results on Simulation Studies}
\label{app:simulations}

\subsection{Numerically generating nuisance functions from \Holder{} classes with given smoothness}
\label{app:holder}
The functions $h_{f}$, $h_{b}$ and $h_{p}$ appearing in Section~\ref{section_simulations} are of the following forms:
\begin{align}
h_{f} (x; \beta_{f}) & \propto 1 + \text{exp} \left\{ \frac{1}{2} \sum_{j \in \mathcal{J}, \ell \in \mathbb{Z}} 2^{- j (\beta_{f} + 0.5)} \omega_{j, \ell} (x) \right\}, \label{f} \\
h_{b} (x; \beta_{b}) & = \sum_{j \in \mathcal{J}, \ell \in \mathbb{Z}} 2^{- j (\beta_{b} + 0.5)} \omega_{j, \ell} (x), \label{b} \\
h_{p} (x; \beta_{p}) & = \sum_{j \in \mathcal{J}, \ell \in \mathbb{Z}} 2^{- j (\beta_{p} + 0.5)} \omega_{j, \ell} (x), \label{p}
\end{align}
where $\mathcal{J} = \{ 0, 3, 6, 9, 10, 16 \}$ and $\omega_{j, \ell} (\cdot)$ is the D12 (or equivalently db6) mother wavelets function dilated at resolution $j$, shifted by $\ell$ \citep{daubechies1992ten, mallat1999wavelet}. The equivalent characterization of Besov-Triebel spaces by the corresponding wavelet coefficients in the frequency domain (see equation (4.89) on page 331 of \cite{gine2016mathematical}) and the embedding of \Holder{} into Besov-Triebel spaces (see page 350 of \cite{gine2016mathematical}) together imply that $h_{f} (\cdot; \beta_{f}) \in H (\beta_{f}, C)$, $h_{b} (\cdot; \beta_{b}) \in H (\beta_{b}, C)$ and $h_{p} (\cdot; \beta_{p}) \in H (\beta_{p}, C)$. For R packages of generating such complex simulations, we refer readers to \citet{xu2022deepmed}. In Table~\ref{tab:s1}, we provide the numerical values for $\left( \zeta_{b, j}, \zeta_{p, j} \right)_{j = 1}^{8}$ used in generating the simulation experiments in Section~\ref{section_simulations}.

\begin{table}[!htbp]
\begin{tabular}{c|c|c}
\hline
$j$ & $\zeta_{b, j}$ & $\zeta_{p, j}$ \\
\hline
1 & -0.2819 & 0.09789 \\
2 & 0.4876 & 0.08800 \\
3 & -0.1515 & -0.4823 \\
4 & -0.1190 & 0.4588 \\
\hline
\end{tabular}
\caption{Coefficients used in constructing $b$ and $\pi$ in Section~\ref{section_simulations}.}\label{tab:s1}
\end{table}

\subsection{Numerically generating correlated multidimensional covariates \texorpdfstring{$X$}{X} with fixed non-smooth marginal densities}
\label{app:multiX}

In the simulation study conducted in Section~\ref{section_simulations}, one key step of generating the simulated datasets is to draw correlated multidimensional covariates $X \in [0, 1]^{d}$ with fixed non-smooth marginal densities. First, we fix the marginal densities of $X$ in each dimension proportional to $h_{f} (\cdot)$ as in \eqref{f}. Then we make $2K$ independent draws of $\tilde{X}_{i, j}$, $i = 1, \ldots, 2K$, from $h_{f}$ for every $j = 1, \ldots, d$ so $\tilde{X} = (\tilde{X}_{1, \cdot}, \ldots, \tilde{X}_{2K, \cdot})^{\top} \in [0, 1]^{2K \times d}$. Next, to introduce correlations between different dimensions, we follow the strategy proposed in \citet{baker2008order}. First we group every two consecutive draws: $(\tilde{X}_{1, \cdot}, \tilde{X}_{2, \cdot})^{\top}, (\tilde{X}_{3, \cdot}, \tilde{X}_{4, \cdot})^{\top}, \ldots, (\tilde{X}_{2K - 1, \cdot}, \tilde{X}_{2K, \cdot})^{\top}$. Then for each pair $(\tilde{X}_{2 i - 1, \cdot}, \tilde{X}_{2 i, \cdot})^{\top}$ for $i = 1, \ldots, K$, we form the following $d$-dimensional random vectors 
\begin{align*}
U_{i} \coloneqq (\max(\tilde{X}_{2 i - 1, 1}, \tilde{X}_{2 i, 1}), \ldots, \max(\tilde{X}_{2 i - 1, d}, \tilde{X}_{2 i, d}))^{\top}, \\ 
V_{i} \coloneqq (\min(\tilde{X}_{2 i - 1, 1}, \tilde{X}_{2 i, 1}), \ldots, \min(\tilde{X}_{2 i - 1, d}, \tilde{X}_{2 i, d}))^{\top}.
\end{align*} 
Lastly, we construct $K$ independent $d$-dimensional vectors $X$ by the following rule: for each $i = 1, \ldots, K$, we draw a Bernoulli random variable $B_{i}$ with probability 1 / 2, and if $B_{i} = 0$, $X_{i, \cdot} = U_{i}$, otherwise $X_{i, \cdot} = V_{i}$. Following the above strategy, we conserve the marginal density of $X_{\cdot, j}$ as that of $\tilde{X}_{\cdot, j}$ but create dependence between different dimensions.

\subsection{Tables presented in the main text}
\label{app:tables}

In this section, we present all the tables (Table~\ref{tab:1}--Table~\ref{tab:6}) referred to in Section~\ref{section_simulations}, where we present the main simulation results in the main text.

\begin{table}[htbp]
\begin{tabular}{c|c|c|c|c|c}
\hline
$n_{\rm tr}$ & $n$ & $\widehat{\mathbb{IF}}_{2, 2, k} (\Omega)$ & $\widehat{\mathbb{IF}}_{2, 2, k} (\omegahat^{\rm emp})$ & $\widehat{\mathbb{IF}}_{2, 2, k} (\omegahat^{\rm ac})$ & $\widehat{\mathbb{IF}}_{2, 2, k} (\hat{g})$ \\ \hline
25,000 & 25,000 & -4.62 (0.489) & -5.02 (0.605) & -6.16 (0.908) & -1.48 (0.197) \\ 
100,000 & 25,000 & -4.50 (0.481) & -4.67 (0.496) & -5.64 (0.801) & -0.303 (0.0408) \\ 
200,000 & 25,000 & -4.56 (0.480) & -4.60 (0.483) & -5.56 (0.749) & -0.148 (0.0197) \\ \hline
25,000 & 100,000 & -6.51 (0.318) & -7.13 (0.373) & -8.68 (0.584) & -2.15 (0.116) \\ 
100,000 & 100,000 & -6.39 (0.306) & -6.62 (0.317) & -7.98 (0.514) & -0.450 (0.0233) \\ 
200,000 & 100,000 & -6.40 (0.307) & -6.45 (0.311) & -7.78 (0.490) & -0.217 (0.0113) \\ \hline
25,000 & 200,000 & -6.54 (0.213) & -7.15 (0.251) & -8.67 (0.418) & -2.15 (0.0804) \\ 
100,000 & 200,000 & -6.42 (0.205) & -6.65 (0.209) & -7.98 (0.365) & -0.452 (0.0158) \\ 
200,000 & 200,000 & -6.43 (0.206) & -6.48 (0.211) & -7.78 (0.342) & -0.218 (0.00761) \\ \hline
\end{tabular}
\caption{Results of simulation experiment (nuisance functions estimated by GLMs): Column 1: training sample size; column 2: estimation sample size; columns 3--7: Monte Carlo means (and standard deviations) of $10^{-2} \times \widehat{\mathbb{IF}}_{2, 2, k} (\Omega)$, $10^{-2} \times \widehat{\mathbb{IF}}_{2, 2, k} (\omegahat^{\rm emp})$, $10^{-2} \times \widehat{\mathbb{IF}}_{2, 2, k} (\omegahat^{\rm ac})$ and $10^{-2} \times \widehat{\mathbb{IF}}_{2, 2, k} (\hat{g})$.}
\label{tab:1}
\end{table}

\begin{table}[htbp]
\begin{tabular}{c|c|c|c|c|c|c}
\hline
$n_{\rm tr}$ & $n$ & $\hat{\psi}_{1} - \psi (\theta)$ & $\hat{\psi}_{2, k} (\Omega) - \psi (\theta)$ & $\hat{\psi}_{2, k}^{\rm emp} - \psi (\theta)$ & $\hat{\psi}_{2, k}^{\rm ac} - \psi (\theta)$ & $\hat{\psi}_{2, k} (\hat{g}) - \psi (\theta)$ \\ \hline
25,000 & 25,000 & -7.57 (0.759) & -2.96 (0.662) & -2.55 (0.718) & -1.42 (0.944) & -6.10 (0.719) \\ 
100,000 & 25,000 & -7.44 (0.756) & -2.95 (0.662) & -2.78 (0.646) & -1.80 (0.868) & -7.14 (0.744) \\ 
200,000 & 25,000 & -7.50 (0.755) & -2.95 (0.662) & -2.90 (0.662) & -1.95 (0.831) & -7.36 (0.749) \\ \hline
25,000 & 100,000 & -7.38 (0.353) & -0.866 (0.251) & -0.252 (0.289) & 1.30 (0.453) & -5.23 (0.288) \\ 
100,000 & 100,000 & -7.25 (0.346) & -0.865 (0.248) & -0.636 (0.258) & 0.725 (0.398) & -6.81 (0.322) \\ 
200,000 & 100,000 & -7.27 (0.347) & -0.866 (0.249) & -0.811 (0.255) & 0.512 (0.378) & -7.05 (0.339) \\ \hline
25,000 & 200,000 & -6.86 (0.225) & -0.322 (0.153) & 0.288 (0.183) & 1.81 (0.356) & -4.71 (0.184) \\ 
100,000 & 200,000 & -6.74 (0.221) & -0.321 (0.152) & 0.0900 (0.154) & 1.24 (0.306) & -6.29 (0.211) \\ 
200,000 & 200,000 & -6.75 (0.221) & -0.321 (0.152) & 0.0267 (0.157) & 1.03 (0.284) & -6.53 (0.217) \\ \hline
\end{tabular}
\caption{Results of simulation experiment (nuisance functions estimated by GLMs): Column 1: training sample size; column 2: estimation sample size; columns 3--7: Monte Carlo means (and standard deviations) of $10^{-2} \times (\hat{\psi}_{1} - \psi (\theta))$, $10^{-2} \times (\hat{\psi}_{2, k} (\Omega) - \psi (\theta))$, $10^{-2} \times (\hat{\psi}_{2, k}^{\rm emp} - \psi (\theta))$, $10^{-2} \times (\hat{\psi}_{2, k}^{\rm ac} - \psi (\theta))$ and $10^{-2} \times (\hat{\psi}_{2, k} (\hat{g}) - \psi (\theta))$.}
\label{tab:2}
\end{table}

\begin{table}[htbp]
\begin{tabular}{c|c|c|c|c|c}
\hline
$n_{\rm tr}$ & $n$ & $\widehat{\mathbb{IF}}_{2, 2, k} (\Omega)$ & $\widehat{\mathbb{IF}}_{2, 2, k} (\omegahat^{\rm emp})$ & $\widehat{\mathbb{IF}}_{2, 2, k} (\omegahat^{\rm ac})$ & $\widehat{\mathbb{IF}}_{2, 2, k} (\hat{g})$ \\ \hline
25,000 & 25,000 & -1.75 (0.489) & -1.88 (0.494) & -1.99 (0.616) & -0.500 (0.151) \\ 
100,000 & 25,000 & -1.71 (0.481) & -1.73 (0.411) & -1.84 (0.551) & -0.101 (0.0321) \\ 
200,000 & 25,000 & -1.79 (0.480) & -1.83 (0.418) & -1.93 (0.527) & -0.0497 (0.0154) \\ \hline
25,000 & 100,000 & -2.56 (0.202) & -2.77 (0.231) & -3.01 (0.310) & -0.772 (0.0718) \\ 
100,000 & 100,000 & -2.51 (0.198) & -2.55 (0.201) & -2.78 (0.275) & -0.162 (0.0152) \\ 
200,000 & 100,000 & -2.52 (0.196) & -2.56 (0.198) & -2.77 (0.265) & -0.0768 (0.00724) \\ \hline
25,000 & 200,000 & -2.56 (0.135) & -2.77 (0.155) & -3.00 (0.211) & -0.765 (0.0469) \\ 
100,000 & 200,000 & -2.52 (0.134) & -2.56 (0.139) & -2.76 (0.185) & -0.162 (0.00983) \\ 
200,000 & 200,000 & -2.52 (0.135) & -2.56 (0.138) & -2.76 (0.177) & -0.0768 (0.00470) \\ \hline
\end{tabular}
\caption{Results of simulation experiment (nuisance functions estimated by GAMs): Column 1: training sample size; column 2: estimation sample size; columns 3--6: Monte Carlo means (and standard deviations) of $10^{-2} \times \widehat{\mathbb{IF}}_{2, 2, k} (\Omega)$, $10^{-2} \times \widehat{\mathbb{IF}}_{2, 2, k} (\omegahat^{\rm emp})$, $10^{-2} \times \widehat{\mathbb{IF}}_{2, 2, k} (\omegahat^{\rm ac})$ and $10^{-2} \times \widehat{\mathbb{IF}}_{2, 2, k} (\hat{g})$.}
\label{tab:3}
\end{table}

\begin{table}[htbp]
\begin{tabular}{c|c|c|c|c|c|c}
\hline
$n_{\rm tr}$ & $n$ & $\hat{\psi}_{1} - \psi (\theta)$ & $\hat{\psi}_{2, k} (\Omega) - \psi (\theta)$ & $\hat{\psi}_{2, k}^{\rm emp} - \psi (\theta)$ & $\hat{\psi}_{2, k}^{\rm ac} - \psi (\theta)$ & $\hat{\psi}_{2, k} (\hat{g}) - \psi (\theta)$ \\ \hline
25,000 & 25,000 & -4.71 (0.709) & -2.95 (0.648) & -2.83 (0.691) & -2.71 (0.767) & -4.21 (0.670) \\ 
100,000 & 25,000 & -4.66 (0.708) & -2.95 (0.648) & -2.93 (0.654) & -2.82 (0.738) & -4.56 (0.697) \\ 
200,000 & 25,000 & -4.75 (0.705) & -2.95 (0.649) & -2.92 (0.656) & -2.83 (0.723) & -4.79 (0.700) \\ \hline
25,000 & 100,000 & -3.44 (0.304) & -0.877 (0.242) & -0.664 (0.264) & -0.424 (0.304) & -2.66 (0.269) \\ 
100,000 & 100,000 & -3.39 (0.300) & -0.876 (0.240) & -0.838 (0.242) & -0.614 (0.281) & -3.23 (0.292) \\ 
200,000 & 100,000 & -3.40 (0.299) & -0.877 (0.240) & -0.841 (0.242) & -0.626 (0.275) & -3.32 (0.295) \\ \hline
25,000 & 200,000 & -2.90 (0.167) & -0.340 (0.151) & -0.128 (0.159) & 0.0961 (0.211) & -2.13 (0.151) \\ 
100,000 & 200,000 & -2.85 (0.164) & -0.339 (0.151) & -0.294 (0.151) & -0.0901 (0.192) & -2.69 (0.160) \\ 
200,000 & 200,000 & -2.86 (0.165) & -0.340 (0.151) & -0.303 (0.153) & -0.100 (0.184) & -2.77 (0.163) \\ \hline
\end{tabular}
\caption{Results of simulation experiment (nuisance functions estimated by GAMs): Column 1: training sample size; column 2: estimation sample size; columns 3--6: Monte Carlo means (and standard deviations) of $10^{-2}
\times (\hat{\psi}_{1} - \psi (\theta))$, $10^{-2} \times (\hat{\psi}_{2, k} (\Omega) - \psi (\theta))$, $10^{-2} \times (\hat{\psi}_{2, k}^{\rm emp} - \psi (\theta))$, $10^{-2} \times (\hat{\psi}_{2, k}^{\rm ac} - \psi (\theta))$ and $10^{-2} \times (\hat{\psi}_{2, k} (\hat{g}) - \psi (\theta))$.}
\label{tab:4}
\end{table}

\begin{table}[htbp]
\begin{tabular}{c|c|c|c}
\hline
$n_{\rm tr}$ & $n$ & $\widehat{\mathbb{IF}}_{2, 2, k} (\omegahat^{\rm emp}) - \widehat{\mathbb{IF}}_{2, 2, k} (\Omega)$ & $\widehat{\mathbb{IF}}_{2, 2, k} (\omegahat^{\rm ac}) - \widehat{\mathbb{IF}}_{2, 2, k} (\Omega)$ \\ \hline
25,000 & 25,000 & -4.06 (2.18) & -15.41 (3.43) \\ 
100,000 & 25,000 & -1.69 (0.865) & -11.44 (2.89) \\ 
200,000 & 25,000 & -0.491 (0.537) & -10.04 (2.59) \\ \hline
25,000 & 100,000 & -6.15 (1.12) & -21.70 (1.88) \\ 
100,000 & 100,000 & -2.29 (0.514) & -15.90 (1.56) \\ 
200,000 & 100,000 & -0.545 (0.340) & -13.78 (1.42) \\ \hline
25,000 & 200,000 & -6.11 (0.835) & -21.31 (1.52) \\ 
100,000 & 200,000 & -2.31 (0.356) & -15.62 (1.26) \\ 
200,000 & 200,000 & -0.540 (0.224) & -13.52 (1.13) \\ \hline
\end{tabular}
\caption{Results of simulation experiment (nuisance functions estimated by GLMs): Column 1: training sample size; column 2: estimation sample size; columns 3--4: Monte Carlo means (and standard deviations) of $10^{-3} \times (\widehat{\mathbb{IF}}_{2, 2, k} (\omegahat^{\rm emp}) - \widehat{\mathbb{IF}}_{2, 2, k} (\Omega))$ and $10^{-3} \times (\widehat{\mathbb{IF}}_{2, 2, k} (\omegahat^{\rm ac}) - \widehat{\mathbb{IF}}_{2, 2, k} (\Omega))$.}
\label{tab:5}
\end{table}

\begin{table}[htbp]
\begin{tabular}{c|c|c|c}
\hline
$n_{\rm tr}$ & $n$ & $\widehat{\mathbb{IF}}_{2, 2, k} (\omegahat^{\rm emp}) - \widehat{\mathbb{IF}}_{2, 2, k} (\Omega)$ & $\widehat{\mathbb{IF}}_{2, 2, k} (\omegahat^{\rm ac}) - \widehat{\mathbb{IF}}_{2, 2, k} (\Omega)$ \\ \hline
25,000 & 25,000 & -1.25 (1.63) & -2.40 (2.01) \\ 
100,000 & 25,000 & -0.269 (0.633) & -1.30 (1.68) \\ 
200,000 & 25,000 & -0.344 (0.472) & -1.32 (1.50) \\ \hline
25,000 & 100,000 & -2.12 (0.768) & -4.53 (0.902) \\ 
100,000 & 100,000 & -0.381 (0.326) & -2.62 (0.719) \\ 
200,000 & 100,000 & -0.366 (0.232) & -2.51 (0.657) \\ \hline
25,000 & 200,000 & -2.12 (0.497) & -4.36 (0.711) \\ 
100,000 & 200,000 & -0.450 (0.231) & -2.49 (0.567) \\ 
200,000 & 200,000 & -0.374 (0.152) & -2.40 (0.501) \\ \hline
\end{tabular}
\caption{Results of simulation experiment (nuisance functions estimated by GAMs): Column 1: training sample size; column 2: estimation sample size; columns 3--4: Monte Carlo means (and standard deviations) of $10^{-3} \times (\widehat{\mathbb{IF}}_{2, 2, k} (\omegahat^{\rm emp}) - \widehat{\mathbb{IF}}_{2, 2, k} (\Omega))$ and $10^{-3} \times (\widehat{\mathbb{IF}}_{2, 2, k} (\omegahat^{\rm ac}) - \widehat{\mathbb{IF}}_{2, 2, k} (\Omega))$.}
\label{tab:6}
\end{table}

\newpage

\subsection{More simulation results on the effects of the choice of \texorpdfstring{$k$}{} and basis functions}
\label{app:practice}

As suggested by a referee, we discuss some further issues of the finite-sample performance of the proposed empirical HOIF estimator. We divide our discussions into the following aspects. In the comparison below, we focus mainly on how the scaling between $k$ and $n$ affects the finite-sample performance of the empirical HOIF estimator. Therefore, we consider a relatively simple simulation setting to focus on the main issues. In particular, we first generate a one-dimensional covariate $X$ uniformly distributed over $[0, 1]$ and then generate the same function for the outcome regression $b$ and the propensity score after logit transformation $\mathrm{logit} (p)$ with \Holder{} smoothness 0.25 using the same strategy \citep{xu2022deepmed} as in Section~\ref{section_simulations}. We estimate nuisance functions using generalized linear models (GLMs). Choosing $d = 1$ simplifies the comparison, as we do not need to further consider whether we should aggregate the basis transformations in an additive manner as in Section~\ref{section_simulations} or take their tensor products. We plan to explore such a comparison in a future paper. 

\paragraph*{Fixing a set of basis functions, the influence of $k$}

First, we examine how the scaling between $k$ and $n$ could affect the finite-sample performance of the empirical HOIF estimators. In this section, we focus on the case where $\bar{z}_{k}$ is constructed by wavelets. From Table~\ref{tab:wavelets}, we observe that as $k$ increases, $\hat{\IIFF}_{2, 2, k} (\hat{\Omega}^{\rm emp})$ can recover a larger fraction of the bias. However, the empirical performance is determined by the scaling between $k$ and $n$. In particular, empirically, we observe that when $k$ is roughly $0.1 n$ to $0.2 n$, $\hat{\IIFF}_{2, 2, k} (\hat{\Omega}^{\rm emp})$ generally exhibits stable numerical performance. For example, when $k = 512$, we need about $n = 20000$ for $\hat{\IIFF}_{2, 2, k} (\hat{\Omega}^{\rm emp})$ to perform well. However, when $k$ increases beyond this range, $\hat{\IIFF}_{2, 2, k} (\hat{\Omega}^{\rm emp})$ begins to become numerically unstable. We suggest that one can check whether $\hat{\IIFF}_{3, 3, k} (\hat{\Omega}^{\rm emp})$ is comparable to $\hat{\IIFF}_{2, 2, k} (\hat{\Omega}^{\rm emp})$ as a heuristic method to empirically verify the numerical stability of $\hat{\IIFF}_{2, 2, k} (\hat{\Omega}^{\rm emp})$. This is because when $\hat{\IIFF}_{2, 2, k} (\hat{\Omega}^{\rm emp})$ performs well, $\hat{\IIFF}_{3, 3, k} (\hat{\Omega}^{\rm emp})$ should be of a magnitude smaller than $\hat{\IIFF}_{2, 2, k} (\hat{\Omega}^{\rm emp})$.

\begin{table}[htbp]
\centering
\begin{tabular}{c|c|c|c}
\hline
$n$ & $k$ & $\hat{\psi}_{1} - \psi (\theta)$ & $\hat{\IIFF}_{2, 2, k} (\hat{\Omega}^{\rm emp})$ \\
\hline
5000 & 32 & 0.464 (0.028) & -0.315 (0.026) \\
5000 & 64 & 0.464 (0.028) & -0.332 (0.032) \\
5000 & 128 & 0.464 (0.028) & -0.453 (0.048) \\
5000 & 256 & 0.464 (0.028) & blow up (blow up) \\
5000 & 512 & 0.464 (0.028) & blow up (blow up) \\
\hline
10000 & 32 & 0.461 (0.020) & -0.315 (0.017) \\
10000 & 64 & 0.461 (0.020) & -0.326 (0.020) \\
10000 & 128 & 0.461 (0.020) & -0.471 (0.033) \\
10000 & 256 & 0.461 (0.020) & -0.560 (0.046)* \\
10000 & 512 & 0.461 (0.020) & blow up (blow up) \\
\hline
20000 & 32 & 0.461 (0.012) & -0.305 (0.008) \\
20000 & 64 & 0.461 (0.012) & -0.330 (0.013) \\
20000 & 128 & 0.461 (0.012) & -0.426 (0.017) \\
20000 & 256 & 0.461 (0.012) & -0.450 (0.020) \\
20000 & 512 & 0.461 (0.012) & -0.452 (0.024)* \\
\hline
50000 & 32 & 0.466 (0.009) & -0.317 (0.008) \\
50000 & 64 & 0.466 (0.009) & -0.322 (0.008) \\
50000 & 128 & 0.466 (0.009) & -0.409 (0.011) \\
50000 & 256 & 0.466 (0.009) & -0.416 (0.011) \\
50000 & 512 & 0.466 (0.009) & -0.431 (0.011) \\
\hline
\end{tabular}
\caption{Simulation results when $\bar{z}_{k}$ is chosen to be wavelets with $\log_{2} (k)$ being the resolution used. In particular, we report the Monte Carlo means and standard deviations (in parentheses) over 200 Monte Carlo repetitions.}
\label{tab:wavelets}
\end{table}

\paragraph*{The choice of basis functions}

Next, we discuss whether our estimators are robust to the choice of basis functions. We first discuss the difference between using father wavelets and using a mixed father/mother wavelets, which is more standard in the harmonic analysis literature \citep{mallat1999wavelet}. However, for the purpose of bias correction using empirical HOIF estimators, it requires more basis functions to span the same space when using mixed father/mother wavelets by the standard multiresolution analysis properties of wavelets. In finite samples, we have seen that using less basis functions leads to smaller variance. In particular, we examine the simulation setting when $n = 5000$ and $k = 32$ and find that using mother wavelets can result in $\hat{\IIFF}_{2, 2, k} (\hat{\Omega}^{\rm emp})$ with slightly larger standard deviation ($0.032$ vs. $0.026$)

Since we have examined the performance of the empirical HOIF estimators that use wavelets, we will focus on two alternative choices, including Fourier series and Chebyshev orthogonal polynomials. In particular, both cases violate Condition~\ref{stable}. We nonetheless observe that the empirical HOIF estimator performs well empirically when $k$ is sufficiently small compared to $n$. In particular, for Fourier series, we consider the same set of $k$'s as in wavelets and report the results in Table~\ref{tab:fourier} below. Similar conclusions hold for Fourier series to those for wavelets in the previous section. We also observe that at the same $k$, $\hat{\IIFF}_{2, 2, k} (\hat{\Omega}^{\rm emp})$ is roughly on a slightly smaller magnitude than the case using wavelets. This is not surprising as the data is generated using wavelets but using the ``wrong'' Fourier series does not significantly deteriorate the performance of $\hat{\IIFF}_{2, 2, k} (\hat{\Omega}^{\rm emp})$.

However, for Chebyshev orthogonal polynomials, $k$ corresponds to the highest degree of polynomials involved and cannot be set to a very large value. In general, based on our own experience, the largest $k$ in which Chebyshev orthogonal polynomials can still be computed in the default R setting is 27, using the R package \texttt{orthopolynom}. Therefore, we report the results for $k \in \{9, 18, 27\}$ and $n \in \{5000, 10000\}$ only. The results can be found in Table~\ref{tab:chebyshev}. At similar $k$'s, $\hat{\IIFF}_{2, 2, k} (\hat{\Omega}^{\rm emp})$'s can recover a similar amount of bias to the ones using Fourier series or wavelets. When $k = 27$ (the largest $k$ at which Chebyshev orthogonal polynomials can still be computed), $\hat{\IIFF}_{2, 2, k} (\hat{\Omega}^{\rm emp})$ starts to suffer from numerical instability, as exemplified by the corresponding Monte Carlo standard deviations and the Monte Carlo means of $\hat{\IIFF}_{3, 3, k} (\hat{\Omega}^{\rm emp})$.

\begin{table}[htbp]
\centering
\begin{tabular}{c|c|c|c}
\hline
$n$ & $k$ & $\hat{\psi}_{1} - \psi (\theta)$ & $\hat{\IIFF}_{2, 2, k} (\hat{\Omega}^{\rm emp})$ \\
\hline
5000 & 32 & 0.464 (0.028) & -0.241 (0.027) \\
5000 & 64 & 0.464 (0.028) & -0.250 (0.029) \\
5000 & 128 & 0.464 (0.028) & -0.302 (0.037) \\
5000 & 256 & 0.464 (0.028) & -0.308 (0.475)* \\
5000 & 512 & 0.464 (0.028) & -0.284 (1.719)* \\
\hline
10000 & 32 & 0.461 (0.020) & -0.243 (0.017) \\
10000 & 64 & 0.461 (0.020) & -0.252 (0.019) \\
10000 & 128 & 0.461 (0.020) & -0.312 (0.024) \\
10000 & 256 & 0.461 (0.020) & -0.340 (0.039) \\
10000 & 512 & 0.461 (0.020) & 0.009 (3.685)** \\
\hline
20000 & 32 & 0.461 (0.012) & -0.231 (0.008) \\
20000 & 64 & 0.461 (0.012) & -0.253 (0.013) \\
20000 & 128 & 0.461 (0.012) & -0.301 (0.015) \\
20000 & 256 & 0.461 (0.012) & -0.305 (0.017) \\
20000 & 512 & 0.461 (0.012) & -0.317 (0.043)* \\
\hline
50000 & 32 & 0.466 (0.009) & -0.245 (0.008) \\
50000 & 64 & 0.466 (0.009) & -0.254 (0.008) \\
50000 & 128 & 0.466 (0.009) & -0.302 (0.010) \\
50000 & 256 & 0.466 (0.009) & -0.302 (0.010) \\
50000 & 512 & 0.466 (0.009) & -0.308 (0.011) \\
\hline
\end{tabular}
\caption{Simulation results when $\bar{z}_{k}$ is chosen to be Fourier series with $k$ being the largest frequency used. In particular, we report the Monte Carlo means and standard deviations (in parentheses) over 200 Monte Carlo repetitions. *: $\hat{\IIFF}_{3, 3, k} (\hat{\Omega}^{\rm emp})$ is close to $\hat{\IIFF}_{2, 2, k} (\hat{\Omega}^{\rm emp})$ in the numerical value; **: $\hat{\IIFF}_{3, 3, k} (\hat{\Omega}^{\rm emp})$ numerically blows up.}
\label{tab:fourier}
\end{table}

\begin{table}[htbp]
\centering
\begin{tabular}{c|c|c|c}
\hline
$n$ & $k$ & $\hat{\psi}_{1} - \psi (\theta)$ & $\hat{\IIFF}_{2, 2, k} (\hat{\Omega}^{\rm emp})$ \\
\hline
5000 & 10 & 0.464 (0.028) & -0.104 (0.013) \\
5000 & 20 & 0.464 (0.028) & -0.153 (0.021) \\
5000 & 25 & 0.464 (0.028) & -0.250 (0.014) \\
5000 & 27 & 0.464 (0.028) & -0.284 (1.719)** \\
\hline
10000 & 10 & 0.461 (0.020) & -0.116 (0.011) \\
10000 & 20 & 0.461 (0.020) & -0.158 (0.014) \\
10000 & 25 & 0.461 (0.020) & -0.257 (0.016) \\
10000 & 27 & 0.461 (0.020) & -0.180 (1.776)** \\
\hline
\end{tabular}
\caption{Simulation results when $\bar{z}_{k}$ is chosen to be Chebyshev orthogonal polynomials with $k$ being the largest degree of polynomials used. In particular, we report the Monte Carlo means and standard deviations (in parentheses) over 200 Monte Carlo repetitions. **: $\hat{\IIFF}_{3, 3, k} (\hat{\Omega}^{\rm emp})$ numerically blows up.}
\label{tab:chebyshev}
\end{table}

\putbib[free_lunch_biblio]
\end{bibunit}


\begin{thebibliography}{53}

\bibitem[\protect\citeauthoryear{Ai et~al.}{2021}]{ai2021unified}
\begin{barticle}[author]
\bauthor{\bsnm{Ai},~\bfnm{Chunrong}\binits{C.}},
  \bauthor{\bsnm{Linton},~\bfnm{Oliver}\binits{O.}},
  \bauthor{\bsnm{Motegi},~\bfnm{Kaiji}\binits{K.}} \AND
  \bauthor{\bsnm{Zhang},~\bfnm{Zheng}\binits{Z.}}
(\byear{2021}).
\btitle{A unified framework for efficient estimation of general treatment
  models}.
\bjournal{Quantitative Economics}
\bvolume{12}
\bpages{779--816}.
\end{barticle}
\endbibitem

\bibitem[\protect\citeauthoryear{Armstrong and
  Koles{\'a}r}{2021}]{armstrong2021finite}
\begin{barticle}[author]
\bauthor{\bsnm{Armstrong},~\bfnm{Timothy~B}\binits{T.~B.}} \AND
  \bauthor{\bsnm{Koles{\'a}r},~\bfnm{Michal}\binits{M.}}
(\byear{2021}).
\btitle{Finite-Sample Optimal Estimation and Inference on Average Treatment
  Effects Under Unconfoundedness}.
\bjournal{Econometrica}
\bvolume{89}
\bpages{1141--1177}.
\end{barticle}
\endbibitem

\bibitem[\protect\citeauthoryear{Belloni et~al.}{2015}]{belloni2015some}
\begin{barticle}[author]
\bauthor{\bsnm{Belloni},~\bfnm{Alexandre}\binits{A.}},
  \bauthor{\bsnm{Chernozhukov},~\bfnm{Victor}\binits{V.}},
  \bauthor{\bsnm{Chetverikov},~\bfnm{Denis}\binits{D.}} \AND
  \bauthor{\bsnm{Kato},~\bfnm{Kengo}\binits{K.}}
(\byear{2015}).
\btitle{Some new asymptotic theory for least squares series: Pointwise and
  uniform results}.
\bjournal{Journal of Econometrics}
\bvolume{186}
\bpages{345--366}.
\end{barticle}
\endbibitem

\bibitem[\protect\citeauthoryear{Bhattacharya, Nabi and
  Shpitser}{2022}]{bhattacharya2022semiparametric}
\begin{barticle}[author]
\bauthor{\bsnm{Bhattacharya},~\bfnm{Rohit}\binits{R.}},
  \bauthor{\bsnm{Nabi},~\bfnm{Razieh}\binits{R.}} \AND
  \bauthor{\bsnm{Shpitser},~\bfnm{Ilya}\binits{I.}}
(\byear{2022}).
\btitle{Semiparametric Inference For Causal Effects In Graphical Models With
  Hidden Variables}.
\bjournal{Journal of Machine Learning Research}
\bvolume{23}
\bpages{1--76}.
\end{barticle}
\endbibitem

\bibitem[\protect\citeauthoryear{Bickel et~al.}{1993}]{bickel1993efficient}
\begin{bbook}[author]
\bauthor{\bsnm{Bickel},~\bfnm{Peter~J}\binits{P.~J.}},
  \bauthor{\bsnm{Klaassen},~\bfnm{Chris~AJ}\binits{C.~A.}},
  \bauthor{\bsnm{Wellner},~\bfnm{Jon~A}\binits{J.~A.}} \AND
  \bauthor{\bsnm{Ritov},~\bfnm{Ya'acov}\binits{Y.}}
(\byear{1993}).
\btitle{Efficient and adaptive estimation for semiparametric models}
\bvolume{4}.
\bpublisher{Johns Hopkins University Press Baltimore}.
\end{bbook}
\endbibitem

\bibitem[\protect\citeauthoryear{Breunig and Chen}{2024}]{breunig2024adaptive}
\begin{barticle}[author]
\bauthor{\bsnm{Breunig},~\bfnm{Christoph}\binits{C.}} \AND
  \bauthor{\bsnm{Chen},~\bfnm{Xiaohong}\binits{X.}}
(\byear{2024}).
\btitle{Adaptive, Rate-Optimal Hypothesis Testing in Nonparametric {IV}
  Models}.
\bjournal{Econometrica}
\bvolume{92}
\bpages{2027--2067}.
\end{barticle}
\endbibitem

\bibitem[\protect\citeauthoryear{Carone, D{\'\i}az and van~der
  Laan}{2018}]{carone2018higher}
\begin{bincollection}[author]
\bauthor{\bsnm{Carone},~\bfnm{Marco}\binits{M.}},
  \bauthor{\bsnm{D{\'\i}az},~\bfnm{Iv{\'a}n}\binits{I.}} \AND
  \bauthor{\bparticle{van~der} \bsnm{Laan},~\bfnm{Mark~J}\binits{M.~J.}}
(\byear{2018}).
\btitle{Higher-Order Targeted Loss-Based Estimation}.
In \bbooktitle{Targeted Learning in Data Science}
\bpages{483--510}.
\bpublisher{Springer}.
\end{bincollection}
\endbibitem

\bibitem[\protect\citeauthoryear{Cattaneo, Farrell and
  Feng}{2020}]{cattaneo2020large}
\begin{barticle}[author]
\bauthor{\bsnm{Cattaneo},~\bfnm{Matias~D}\binits{M.~D.}},
  \bauthor{\bsnm{Farrell},~\bfnm{Max~H}\binits{M.~H.}} \AND
  \bauthor{\bsnm{Feng},~\bfnm{Yingjie}\binits{Y.}}
(\byear{2020}).
\btitle{Large sample properties of partitioning-based series estimators}.
\bjournal{The Annals of Statistics}
\bvolume{48}
\bpages{1718--1741}.
\end{barticle}
\endbibitem

\bibitem[\protect\citeauthoryear{Cattaneo, Jansson and
  Newey}{2018}]{cattaneo2018inference}
\begin{barticle}[author]
\bauthor{\bsnm{Cattaneo},~\bfnm{Matias~D}\binits{M.~D.}},
  \bauthor{\bsnm{Jansson},~\bfnm{Michael}\binits{M.}} \AND
  \bauthor{\bsnm{Newey},~\bfnm{Whitney~K}\binits{W.~K.}}
(\byear{2018}).
\btitle{Inference in linear regression models with many covariates and
  heteroscedasticity}.
\bjournal{Journal of the American Statistical Association}
\bvolume{113}
\bpages{1350--1361}.
\end{barticle}
\endbibitem

\bibitem[\protect\citeauthoryear{Cattaneo, Jansson and
  Ma}{2019}]{cattaneo2019two}
\begin{barticle}[author]
\bauthor{\bsnm{Cattaneo},~\bfnm{Matias~D}\binits{M.~D.}},
  \bauthor{\bsnm{Jansson},~\bfnm{Michael}\binits{M.}} \AND
  \bauthor{\bsnm{Ma},~\bfnm{Xinwei}\binits{X.}}
(\byear{2019}).
\btitle{Two-step estimation and inference with possibly many included
  covariates}.
\bjournal{The Review of Economic Studies}
\bvolume{86}
\bpages{1095--1122}.
\end{barticle}
\endbibitem

\bibitem[\protect\citeauthoryear{Chen and Christensen}{2015}]{chen2015optimal}
\begin{barticle}[author]
\bauthor{\bsnm{Chen},~\bfnm{Xiaohong}\binits{X.}} \AND
  \bauthor{\bsnm{Christensen},~\bfnm{Timothy~M}\binits{T.~M.}}
(\byear{2015}).
\btitle{Optimal uniform convergence rates and asymptotic normality for series
  estimators under weak dependence and weak conditions}.
\bjournal{Journal of Econometrics}
\bvolume{188}
\bpages{447--465}.
\end{barticle}
\endbibitem

\bibitem[\protect\citeauthoryear{Chen and Christensen}{2018}]{chen2018optimal}
\begin{barticle}[author]
\bauthor{\bsnm{Chen},~\bfnm{Xiaohong}\binits{X.}} \AND
  \bauthor{\bsnm{Christensen},~\bfnm{Timothy~M}\binits{T.~M.}}
(\byear{2018}).
\btitle{Optimal sup-norm rates and uniform inference on nonlinear functionals
  of nonparametric {IV} regression}.
\bjournal{Quantitative Economics}
\bvolume{9}
\bpages{39--84}.
\end{barticle}
\endbibitem

\bibitem[\protect\citeauthoryear{Chernozhukov
  et~al.}{2018}]{chernozhukov2018double}
\begin{barticle}[author]
\bauthor{\bsnm{Chernozhukov},~\bfnm{Victor}\binits{V.}},
  \bauthor{\bsnm{Chetverikov},~\bfnm{Denis}\binits{D.}},
  \bauthor{\bsnm{Demirer},~\bfnm{Mert}\binits{M.}},
  \bauthor{\bsnm{Duflo},~\bfnm{Esther}\binits{E.}},
  \bauthor{\bsnm{Hansen},~\bfnm{Christian}\binits{C.}},
  \bauthor{\bsnm{Newey},~\bfnm{Whitney}\binits{W.}} \AND
  \bauthor{\bsnm{Robins},~\bfnm{James}\binits{J.}}
(\byear{2018}).
\btitle{Double/debiased machine learning for treatment and structural
  parameters}.
\bjournal{The Econometrics Journal}
\bvolume{21}
\bpages{C1--C68}.
\end{barticle}
\endbibitem

\bibitem[\protect\citeauthoryear{Cui et~al.}{2024}]{cui2024semiparametric}
\begin{barticle}[author]
\bauthor{\bsnm{Cui},~\bfnm{Yifan}\binits{Y.}},
  \bauthor{\bsnm{Pu},~\bfnm{Hongming}\binits{H.}},
  \bauthor{\bsnm{Shi},~\bfnm{Xu}\binits{X.}},
  \bauthor{\bsnm{Miao},~\bfnm{Wang}\binits{W.}} \AND
  \bauthor{\bsnm{Tchetgen~Tchetgen},~\bfnm{Eric}\binits{E.}}
(\byear{2024}).
\btitle{Semiparametric proximal causal inference}.
\bjournal{Journal of the American Statistical Association}
\bvolume{119}
\bpages{1348--1359}.
\end{barticle}
\endbibitem

\bibitem[\protect\citeauthoryear{D{\'\i}az, Carone and van~der
  Laan}{2016}]{diaz2016second}
\begin{barticle}[author]
\bauthor{\bsnm{D{\'\i}az},~\bfnm{Iv{\'a}n}\binits{I.}},
  \bauthor{\bsnm{Carone},~\bfnm{Marco}\binits{M.}} \AND
  \bauthor{\bparticle{van~der} \bsnm{Laan},~\bfnm{Mark~J}\binits{M.~J.}}
(\byear{2016}).
\btitle{Second-order inference for the mean of a variable missing at random}.
\bjournal{The International Journal of Biostatistics}
\bvolume{12}
\bpages{333--349}.
\end{barticle}
\endbibitem

\bibitem[\protect\citeauthoryear{Duong and Hazelton}{2005}]{duong2005cross}
\begin{barticle}[author]
\bauthor{\bsnm{Duong},~\bfnm{Tarn}\binits{T.}} \AND
  \bauthor{\bsnm{Hazelton},~\bfnm{Martin~L}\binits{M.~L.}}
(\byear{2005}).
\btitle{Cross-validation bandwidth matrices for multivariate kernel density
  estimation}.
\bjournal{Scandinavian Journal of Statistics}
\bvolume{32}
\bpages{485--506}.
\end{barticle}
\endbibitem

\bibitem[\protect\citeauthoryear{Gin{\'e} and
  Nickl}{2016}]{gine2016mathematical}
\begin{bbook}[author]
\bauthor{\bsnm{Gin{\'e}},~\bfnm{Evarist}\binits{E.}} \AND
  \bauthor{\bsnm{Nickl},~\bfnm{Richard}\binits{R.}}
(\byear{2016}).
\btitle{Mathematical foundations of infinite-dimensional statistical models}
\bvolume{40}.
\bpublisher{Cambridge University Press}.
\end{bbook}
\endbibitem

\bibitem[\protect\citeauthoryear{Gu, Liu and Ma}{2025}]{gu2025assumption}
\begin{barticle}[author]
\bauthor{\bsnm{Gu},~\bfnm{Yujia}\binits{Y.}},
  \bauthor{\bsnm{Liu},~\bfnm{Lin}\binits{L.}} \AND
  \bauthor{\bsnm{Ma},~\bfnm{Wei}\binits{W.}}
(\byear{2025}).
\btitle{Assumption-lean covariate adjustment under covariate adaptive
  randomization when $p = o (n)$}.
\bjournal{arXiv preprint arXiv:2512.20046}.
\end{barticle}
\endbibitem

\bibitem[\protect\citeauthoryear{Hahn}{1998}]{hahn1998role}
\begin{barticle}[author]
\bauthor{\bsnm{Hahn},~\bfnm{Jinyong}\binits{J.}}
(\byear{1998}).
\btitle{On the role of the propensity score in efficient semiparametric
  estimation of average treatment effects}.
\bjournal{Econometrica}
\bvolume{66}
\bpages{315--331}.
\end{barticle}
\endbibitem

\bibitem[\protect\citeauthoryear{Hastie and
  Tibshirani}{1986}]{hastie1986generalized}
\begin{barticle}[author]
\bauthor{\bsnm{Hastie},~\bfnm{Trevor}\binits{T.}} \AND
  \bauthor{\bsnm{Tibshirani},~\bfnm{Robert}\binits{R.}}
(\byear{1986}).
\btitle{Generalized Additive Models}.
\bjournal{Statistical Science}
\bvolume{1}
\bpages{297--310}.
\end{barticle}
\endbibitem

\bibitem[\protect\citeauthoryear{Hirshberg and
  Wager}{2021}]{hirshberg2021augmented}
\begin{barticle}[author]
\bauthor{\bsnm{Hirshberg},~\bfnm{David~A}\binits{D.~A.}} \AND
  \bauthor{\bsnm{Wager},~\bfnm{Stefan}\binits{S.}}
(\byear{2021}).
\btitle{Augmented minimax linear estimation}.
\bjournal{The Annals of Statistics}
\bvolume{49}
\bpages{3206--3227}.
\end{barticle}
\endbibitem

\bibitem[\protect\citeauthoryear{Huang}{2003}]{huang2003local}
\begin{barticle}[author]
\bauthor{\bsnm{Huang},~\bfnm{Jianhua~Z}\binits{J.~Z.}}
(\byear{2003}).
\btitle{Local asymptotics for polynomial spline regression}.
\bjournal{The Annals of Statistics}
\bvolume{31}
\bpages{1600--1635}.
\end{barticle}
\endbibitem

\bibitem[\protect\citeauthoryear{Ichimura and
  Newey}{2022}]{ichimura2022influence}
\begin{barticle}[author]
\bauthor{\bsnm{Ichimura},~\bfnm{Hidehiko}\binits{H.}} \AND
  \bauthor{\bsnm{Newey},~\bfnm{Whitney~K}\binits{W.~K.}}
(\byear{2022}).
\btitle{The influence function of semiparametric estimators}.
\bjournal{Quantitative Economics}
\bvolume{13}
\bpages{29--61}.
\end{barticle}
\endbibitem

\bibitem[\protect\citeauthoryear{Jones, Marron and
  Park}{1991}]{jones1991simple}
\begin{barticle}[author]
\bauthor{\bsnm{Jones},~\bfnm{MC}\binits{M.}},
  \bauthor{\bsnm{Marron},~\bfnm{James~Stephen}\binits{J.~S.}} \AND
  \bauthor{\bsnm{Park},~\bfnm{Byeong~U}\binits{B.~U.}}
(\byear{1991}).
\btitle{A simple root $n$ bandwidth selector}.
\bjournal{The Annals of Statistics}
\bvolume{19}
\bpages{1919--1932}.
\end{barticle}
\endbibitem

\bibitem[\protect\citeauthoryear{Kennedy}{2023}]{kennedy2023towards}
\begin{barticle}[author]
\bauthor{\bsnm{Kennedy},~\bfnm{Edward~H}\binits{E.~H.}}
(\byear{2023}).
\btitle{Towards optimal doubly robust estimation of heterogeneous causal
  effects}.
\bjournal{Electronic Journal of Statistics}
\bvolume{17}
\bpages{3008--3049}.
\end{barticle}
\endbibitem

\bibitem[\protect\citeauthoryear{Kennedy et~al.}{2024}]{kennedy2024minimax}
\begin{barticle}[author]
\bauthor{\bsnm{Kennedy},~\bfnm{Edward~H}\binits{E.~H.}},
  \bauthor{\bsnm{Balakrishnan},~\bfnm{Sivaraman}\binits{S.}},
  \bauthor{\bsnm{Robins},~\bfnm{James~M}\binits{J.~M.}} \AND
  \bauthor{\bsnm{Wasserman},~\bfnm{Larry}\binits{L.}}
(\byear{2024}).
\btitle{Minimax rates for heterogeneous causal effect estimation}.
\bjournal{The Annals of Statistics}
\bvolume{52}
\bpages{793--816}.
\end{barticle}
\endbibitem

\bibitem[\protect\citeauthoryear{Li et~al.}{2011}]{li2011higher}
\begin{barticle}[author]
\bauthor{\bsnm{Li},~\bfnm{Lingling}\binits{L.}},
  \bauthor{\bsnm{Tchetgen~Tchetgen},~\bfnm{Eric}\binits{E.}},
  \bauthor{\bparticle{van~der} \bsnm{Vaart},~\bfnm{Aad}\binits{A.}} \AND
  \bauthor{\bsnm{Robins},~\bfnm{James~M}\binits{J.~M.}}
(\byear{2011}).
\btitle{Higher order inference on a treatment effect under low regularity
  conditions}.
\bjournal{Statistics \& Probability Letters}
\bvolume{81}
\bpages{821--828}.
\end{barticle}
\endbibitem

\bibitem[\protect\citeauthoryear{Liu, Mukherjee and
  Robins}{2020}]{liu2020nearly}
\begin{barticle}[author]
\bauthor{\bsnm{Liu},~\bfnm{Lin}\binits{L.}},
  \bauthor{\bsnm{Mukherjee},~\bfnm{Rajarshi}\binits{R.}} \AND
  \bauthor{\bsnm{Robins},~\bfnm{James~M}\binits{J.~M.}}
(\byear{2020}).
\btitle{On nearly assumption-free tests of nominal confidence interval coverage
  for causal parameters estimated by machine learning}.
\bjournal{Statistical Science}
\bvolume{35}
\bpages{518--539}.
\end{barticle}
\endbibitem

\bibitem[\protect\citeauthoryear{Liu, Mukherjee and
  Robins}{2024}]{liu2024assumption}
\begin{barticle}[author]
\bauthor{\bsnm{Liu},~\bfnm{Lin}\binits{L.}},
  \bauthor{\bsnm{Mukherjee},~\bfnm{Rajarshi}\binits{R.}} \AND
  \bauthor{\bsnm{Robins},~\bfnm{James~M}\binits{J.~M.}}
(\byear{2024}).
\btitle{Assumption-lean falsification tests of rate double-robustness of
  double-machine-learning estimators}.
\bjournal{Journal of Econometrics}
\bvolume{240}
\bpages{105500}.
\end{barticle}
\endbibitem

\bibitem[\protect\citeauthoryear{Liu et~al.}{2021}]{mukherjee2016adaptive}
\begin{barticle}[author]
\bauthor{\bsnm{Liu},~\bfnm{Lin}\binits{L.}},
  \bauthor{\bsnm{Mukherjee},~\bfnm{Rajarsh}\binits{R.}},
  \bauthor{\bsnm{Robins},~\bfnm{James~M}\binits{J.~M.}} \AND
  \bauthor{\bsnm{Tchetgen~Tchetgen},~\bfnm{Eric}\binits{E.}}
(\byear{2021}).
\btitle{Adaptive estimation of nonparametric functionals}.
\bjournal{Journal of Machine Learning Research}
\bvolume{22}
\bpages{1--66}.
\end{barticle}
\endbibitem

\bibitem[\protect\citeauthoryear{Newey}{1990}]{newey1990semiparametric}
\begin{barticle}[author]
\bauthor{\bsnm{Newey},~\bfnm{Whitney~K}\binits{W.~K.}}
(\byear{1990}).
\btitle{Semiparametric efficiency bounds}.
\bjournal{Journal of Applied Econometrics}
\bvolume{5}
\bpages{99--135}.
\end{barticle}
\endbibitem

\bibitem[\protect\citeauthoryear{Newey}{1997}]{newey1997convergence}
\begin{barticle}[author]
\bauthor{\bsnm{Newey},~\bfnm{Whitney~K}\binits{W.~K.}}
(\byear{1997}).
\btitle{Convergence rates and asymptotic normality for series estimators}.
\bjournal{Journal of Econometrics}
\bvolume{79}
\bpages{147--168}.
\end{barticle}
\endbibitem

\bibitem[\protect\citeauthoryear{Newey and Robins}{2018}]{newey2018cross}
\begin{barticle}[author]
\bauthor{\bsnm{Newey},~\bfnm{Whitney~K}\binits{W.~K.}} \AND
  \bauthor{\bsnm{Robins},~\bfnm{James~M}\binits{J.~M.}}
(\byear{2018}).
\btitle{Cross-fitting and fast remainder rates for semiparametric estimation}.
\bjournal{arXiv preprint arXiv:1801.09138}.
\end{barticle}
\endbibitem

\bibitem[\protect\citeauthoryear{Ritov and Bickel}{1990}]{ritov1990achieving}
\begin{barticle}[author]
\bauthor{\bsnm{Ritov},~\bfnm{Y}\binits{Y.}} \AND
  \bauthor{\bsnm{Bickel},~\bfnm{Peter~J}\binits{P.~J.}}
(\byear{1990}).
\btitle{Achieving information bounds in non and semiparametric models}.
\bjournal{The Annals of Statistics}
\bvolume{18}
\bpages{925--938}.
\end{barticle}
\endbibitem

\bibitem[\protect\citeauthoryear{Robins and Ritov}{1997}]{robins1997toward}
\begin{barticle}[author]
\bauthor{\bsnm{Robins},~\bfnm{James~M}\binits{J.~M.}} \AND
  \bauthor{\bsnm{Ritov},~\bfnm{Ya'acov}\binits{Y.}}
(\byear{1997}).
\btitle{Toward a curse of dimensionality appropriate (CODA) asymptotic theory
  for semi-parametric models}.
\bjournal{Statistics in Medicine}
\bvolume{16}
\bpages{285--319}.
\end{barticle}
\endbibitem

\bibitem[\protect\citeauthoryear{Robins and
  Rotnitzky}{1995}]{robins1995semiparametric}
\begin{barticle}[author]
\bauthor{\bsnm{Robins},~\bfnm{James~M}\binits{J.~M.}} \AND
  \bauthor{\bsnm{Rotnitzky},~\bfnm{Andrea}\binits{A.}}
(\byear{1995}).
\btitle{Semiparametric efficiency in multivariate regression models with
  missing data}.
\bjournal{Journal of the American Statistical Association}
\bvolume{90}
\bpages{122--129}.
\end{barticle}
\endbibitem

\bibitem[\protect\citeauthoryear{Robins et~al.}{2008}]{robins2008higher}
\begin{bincollection}[author]
\bauthor{\bsnm{Robins},~\bfnm{James}\binits{J.}},
  \bauthor{\bsnm{Li},~\bfnm{Lingling}\binits{L.}},
  \bauthor{\bsnm{Tchetgen~Tchetgen},~\bfnm{Eric}\binits{E.}} \AND
  \bauthor{\bparticle{van~der} \bsnm{Vaart},~\bfnm{Aad}\binits{A.}}
(\byear{2008}).
\btitle{Higher order influence functions and minimax estimation of nonlinear
  functionals}.
In \bbooktitle{Probability and Statistics: Essays in Honor of David A.
  Freedman}
\bpages{335--421}.
\bpublisher{Institute of Mathematical Statistics}.
\end{bincollection}
\endbibitem

\bibitem[\protect\citeauthoryear{Robins et~al.}{2009a}]{robins2009quadratic}
\begin{barticle}[author]
\bauthor{\bsnm{Robins},~\bfnm{James}\binits{J.}},
  \bauthor{\bsnm{Li},~\bfnm{Lingling}\binits{L.}},
  \bauthor{\bsnm{Tchetgen},~\bfnm{Eric}\binits{E.}} \AND
  \bauthor{\bparticle{van~der} \bsnm{Vaart},~\bfnm{Aad~W}\binits{A.~W.}}
(\byear{2009}a).
\btitle{Quadratic semiparametric von {M}ises calculus}.
\bjournal{Metrika}
\bvolume{69}
\bpages{227--247}.
\end{barticle}
\endbibitem

\bibitem[\protect\citeauthoryear{Robins
  et~al.}{2009b}]{robins2009semiparametric}
\begin{barticle}[author]
\bauthor{\bsnm{Robins},~\bfnm{James}\binits{J.}},
  \bauthor{\bsnm{Tchetgen~Tchetgen},~\bfnm{Eric}\binits{E.}},
  \bauthor{\bsnm{Li},~\bfnm{Lingling}\binits{L.}} \AND
  \bauthor{\bparticle{van~der} \bsnm{Vaart},~\bfnm{Aad}\binits{A.}}
(\byear{2009}b).
\btitle{Semiparametric minimax rates}.
\bjournal{Electronic Journal of Statistics}
\bvolume{3}
\bpages{1305--1321}.
\end{barticle}
\endbibitem

\bibitem[\protect\citeauthoryear{Robins et~al.}{2016}]{robins2016technical}
\begin{barticle}[author]
\bauthor{\bsnm{Robins},~\bfnm{James}\binits{J.}},
  \bauthor{\bsnm{Li},~\bfnm{Lingling}\binits{L.}},
  \bauthor{\bsnm{Tchetgen~Tchetgen},~\bfnm{Eric}\binits{E.}} \AND
  \bauthor{\bparticle{van~der} \bsnm{Vaart},~\bfnm{Aad}\binits{A.}}
(\byear{2016}).
\btitle{Technical Report: Higher Order Influence Functions and Minimax
  Estimation of Nonlinear Functionals}.
\bjournal{arXiv preprint arXiv:1601.05820}.
\end{barticle}
\endbibitem

\bibitem[\protect\citeauthoryear{Robins et~al.}{2017}]{robins2017minimax}
\begin{barticle}[author]
\bauthor{\bsnm{Robins},~\bfnm{James~M}\binits{J.~M.}},
  \bauthor{\bsnm{Li},~\bfnm{Lingling}\binits{L.}},
  \bauthor{\bsnm{Mukherjee},~\bfnm{Rajarshi}\binits{R.}},
  \bauthor{\bsnm{Tchetgen~Tchetgen},~\bfnm{Eric}\binits{E.}} \AND
  \bauthor{\bparticle{van~der} \bsnm{Vaart},~\bfnm{Aad}\binits{A.}}
(\byear{2017}).
\btitle{Minimax estimation of a functional on a structured high-dimensional
  model}.
\bjournal{The Annals of Statistics}
\bvolume{45}
\bpages{1951--1987}.
\end{barticle}
\endbibitem

\bibitem[\protect\citeauthoryear{Robins et~al.}{2023}]{robins2023minimax}
\begin{barticle}[author]
\bauthor{\bsnm{Robins},~\bfnm{James~M}\binits{J.~M.}},
  \bauthor{\bsnm{Li},~\bfnm{Lingling}\binits{L.}},
  \bauthor{\bsnm{Liu},~\bfnm{Lin}\binits{L.}},
  \bauthor{\bsnm{Mukherjee},~\bfnm{Rajarshi}\binits{R.}},
  \bauthor{\bsnm{Tchetgen~Tchetgen},~\bfnm{Eric}\binits{E.}} \AND
  \bauthor{\bparticle{van~der} \bsnm{Vaart},~\bfnm{Aad}\binits{A.}}
(\byear{2023}).
\btitle{Minimax estimation of a functional on a structured high-dimensional
  model (Corrected version)}.
\bjournal{arXiv preprint arXiv:1512.02174}.
\end{barticle}
\endbibitem

\bibitem[\protect\citeauthoryear{Rotnitzky, Smucler and
  Robins}{2021}]{rotnitzky2021characterization}
\begin{barticle}[author]
\bauthor{\bsnm{Rotnitzky},~\bfnm{Andrea}\binits{A.}},
  \bauthor{\bsnm{Smucler},~\bfnm{Ezequiel}\binits{E.}} \AND
  \bauthor{\bsnm{Robins},~\bfnm{James~M}\binits{J.~M.}}
(\byear{2021}).
\btitle{Characterization of parameters with a mixed bias property}.
\bjournal{Biometrika}
\bvolume{108}
\bpages{231--238}.
\end{barticle}
\endbibitem

\bibitem[\protect\citeauthoryear{Stone}{1982}]{stone1982optimal}
\begin{barticle}[author]
\bauthor{\bsnm{Stone},~\bfnm{Charles~J}\binits{C.~J.}}
(\byear{1982}).
\btitle{Optimal Global Rates of Convergence for Nonparametric Regression}.
\bjournal{The Annals of Statistics}
\bvolume{10}
\bpages{1040--1053}.
\end{barticle}
\endbibitem

\bibitem[\protect\citeauthoryear{Tchetgen~Tchetgen and
  Shpitser}{2012}]{tchetgen2012semiparametric}
\begin{barticle}[author]
\bauthor{\bsnm{Tchetgen~Tchetgen},~\bfnm{Eric~J}\binits{E.~J.}} \AND
  \bauthor{\bsnm{Shpitser},~\bfnm{Ilya}\binits{I.}}
(\byear{2012}).
\btitle{Semiparametric theory for causal mediation analysis: Efficiency bounds,
  multiple robustness and sensitivity analysis}.
\bjournal{The Annals of Statistics}
\bvolume{40}
\bpages{1816--1845}.
\end{barticle}
\endbibitem

\bibitem[\protect\citeauthoryear{Tchetgen~Tchetgen
  et~al.}{2008}]{tchetgen2008minimax}
\begin{barticle}[author]
\bauthor{\bsnm{Tchetgen~Tchetgen},~\bfnm{Eric}\binits{E.}},
  \bauthor{\bsnm{Li},~\bfnm{Lingling}\binits{L.}},
  \bauthor{\bsnm{Robins},~\bfnm{James}\binits{J.}} \AND
  \bauthor{\bparticle{van~der} \bsnm{Vaart},~\bfnm{Aad}\binits{A.}}
(\byear{2008}).
\btitle{Minimax estimation of the integral of a power of a density}.
\bjournal{Statistics \& Probability Letters}
\bvolume{78}
\bpages{3307--3311}.
\end{barticle}
\endbibitem

\bibitem[\protect\citeauthoryear{Valiant and
  Valiant}{2017}]{valiant2017automatic}
\begin{barticle}[author]
\bauthor{\bsnm{Valiant},~\bfnm{Gregory}\binits{G.}} \AND
  \bauthor{\bsnm{Valiant},~\bfnm{Paul}\binits{P.}}
(\byear{2017}).
\btitle{An automatic inequality prover and instance optimal identity testing}.
\bjournal{SIAM Journal on Computing}
\bvolume{46}
\bpages{429--455}.
\end{barticle}
\endbibitem

\bibitem[\protect\citeauthoryear{van~der Laan, Wang and van~der
  Laan}{2021}]{van2021higher}
\begin{barticle}[author]
\bauthor{\bparticle{van~der} \bsnm{Laan},~\bfnm{Mark}\binits{M.}},
  \bauthor{\bsnm{Wang},~\bfnm{Zeyi}\binits{Z.}} \AND
  \bauthor{\bparticle{van~der} \bsnm{Laan},~\bfnm{Lars}\binits{L.}}
(\byear{2021}).
\btitle{Higher Order Targeted Maximum Likelihood Estimation}.
\bjournal{arXiv preprint arXiv:2101.06290}.
\end{barticle}
\endbibitem

\bibitem[\protect\citeauthoryear{van~der Vaart}{2014}]{van2014higher}
\begin{barticle}[author]
\bauthor{\bparticle{van~der} \bsnm{Vaart},~\bfnm{Aad}\binits{A.}}
(\byear{2014}).
\btitle{Higher Order Tangent Spaces and Influence Functions}.
\bjournal{Statistical Science}
\bvolume{29}
\bpages{679--686}.
\end{barticle}
\endbibitem

\bibitem[\protect\citeauthoryear{van~der Vaart, Dudoit and van~der
  Laan}{2006}]{vaart2006oracle}
\begin{barticle}[author]
\bauthor{\bparticle{van~der} \bsnm{Vaart},~\bfnm{Aad~W}\binits{A.~W.}},
  \bauthor{\bsnm{Dudoit},~\bfnm{Sandrine}\binits{S.}} \AND
  \bauthor{\bparticle{van~der} \bsnm{Laan},~\bfnm{Mark~J}\binits{M.~J.}}
(\byear{2006}).
\btitle{Oracle inequalities for multi-fold cross validation}.
\bjournal{Statistics \& Decisions}
\bvolume{24}
\bpages{351--371}.
\end{barticle}
\endbibitem

\bibitem[\protect\citeauthoryear{Wood, Pya and
  S{\"a}fken}{2016}]{wood2016smoothing}
\begin{barticle}[author]
\bauthor{\bsnm{Wood},~\bfnm{Simon~N}\binits{S.~N.}},
  \bauthor{\bsnm{Pya},~\bfnm{Natalya}\binits{N.}} \AND
  \bauthor{\bsnm{S{\"a}fken},~\bfnm{Benjamin}\binits{B.}}
(\byear{2016}).
\btitle{Smoothing parameter and model selection for general smooth models}.
\bjournal{Journal of the American Statistical Association}
\bvolume{111}
\bpages{1548--1563}.
\end{barticle}
\endbibitem

\bibitem[\protect\citeauthoryear{Yu and Wang}{2024}]{yu2024treatment}
\begin{binproceedings}[author]
\bauthor{\bsnm{Yu},~\bfnm{Ruoqi}\binits{R.}} \AND
  \bauthor{\bsnm{Wang},~\bfnm{Shulei}\binits{S.}}
(\byear{2024}).
\btitle{Treatment Effects Estimation by Uniform Transformer}.
In \bbooktitle{The Twelfth International Conference on Learning
  Representations}.
\end{binproceedings}
\endbibitem

\bibitem[\protect\citeauthoryear{Zhao et~al.}{2024}]{zhao2024hoif}
\begin{barticle}[author]
\bauthor{\bsnm{Zhao},~\bfnm{Sihui}\binits{S.}},
  \bauthor{\bsnm{Wang},~\bfnm{Xinbo}\binits{X.}},
  \bauthor{\bsnm{Liu},~\bfnm{Lin}\binits{L.}} \AND
  \bauthor{\bsnm{Zhang},~\bfnm{Xin}\binits{X.}}
(\byear{2024}).
\btitle{Covariate adjustment in randomized experiments motivated by
  higher-order influence functions}.
\bjournal{arXiv preprint arXiv:2411.08491}.
\end{barticle}
\endbibitem

\end{thebibliography}


\begin{thebibliography}{18}

\bibitem[\protect\citeauthoryear{Baker}{2008}]{baker2008order}
\begin{barticle}[author]
\bauthor{\bsnm{Baker},~\bfnm{Rose}\binits{R.}}
(\byear{2008}).
\btitle{An order-statistics-based method for constructing multivariate
  distributions with fixed marginals}.
\bjournal{Journal of Multivariate Analysis}
\bvolume{99}
\bpages{2312--2327}.
\end{barticle}
\endbibitem

\bibitem[\protect\citeauthoryear{Belloni et~al.}{2015}]{belloni2015some}
\begin{barticle}[author]
\bauthor{\bsnm{Belloni},~\bfnm{Alexandre}\binits{A.}},
  \bauthor{\bsnm{Chernozhukov},~\bfnm{Victor}\binits{V.}},
  \bauthor{\bsnm{Chetverikov},~\bfnm{Denis}\binits{D.}} \AND
  \bauthor{\bsnm{Kato},~\bfnm{Kengo}\binits{K.}}
(\byear{2015}).
\btitle{Some new asymptotic theory for least squares series: Pointwise and
  uniform results}.
\bjournal{Journal of Econometrics}
\bvolume{186}
\bpages{345--366}.
\end{barticle}
\endbibitem

\bibitem[\protect\citeauthoryear{Cattaneo, Farrell and
  Feng}{2020}]{cattaneo2020large}
\begin{barticle}[author]
\bauthor{\bsnm{Cattaneo},~\bfnm{Matias~D}\binits{M.~D.}},
  \bauthor{\bsnm{Farrell},~\bfnm{Max~H}\binits{M.~H.}} \AND
  \bauthor{\bsnm{Feng},~\bfnm{Yingjie}\binits{Y.}}
(\byear{2020}).
\btitle{Large sample properties of partitioning-based series estimators}.
\bjournal{The Annals of Statistics}
\bvolume{48}
\bpages{1718--1741}.
\end{barticle}
\endbibitem

\bibitem[\protect\citeauthoryear{Chen and Christensen}{2015}]{chen2015optimal}
\begin{barticle}[author]
\bauthor{\bsnm{Chen},~\bfnm{Xiaohong}\binits{X.}} \AND
  \bauthor{\bsnm{Christensen},~\bfnm{Timothy~M}\binits{T.~M.}}
(\byear{2015}).
\btitle{Optimal uniform convergence rates and asymptotic normality for series
  estimators under weak dependence and weak conditions}.
\bjournal{Journal of Econometrics}
\bvolume{188}
\bpages{447--465}.
\end{barticle}
\endbibitem

\bibitem[\protect\citeauthoryear{Chen and Christensen}{2018}]{chen2018optimal}
\begin{barticle}[author]
\bauthor{\bsnm{Chen},~\bfnm{Xiaohong}\binits{X.}} \AND
  \bauthor{\bsnm{Christensen},~\bfnm{Timothy~M}\binits{T.~M.}}
(\byear{2018}).
\btitle{Optimal sup-norm rates and uniform inference on nonlinear functionals
  of nonparametric {IV} regression}.
\bjournal{Quantitative Economics}
\bvolume{9}
\bpages{39--84}.
\end{barticle}
\endbibitem

\bibitem[\protect\citeauthoryear{Daubechies}{1992}]{daubechies1992ten}
\begin{bbook}[author]
\bauthor{\bsnm{Daubechies},~\bfnm{Ingrid}\binits{I.}}
(\byear{1992}).
\btitle{Ten lectures on wavelets}
\bvolume{61}.
\bpublisher{SIAM}.
\end{bbook}
\endbibitem

\bibitem[\protect\citeauthoryear{Gin{\'e} and
  Nickl}{2016}]{gine2016mathematical}
\begin{bbook}[author]
\bauthor{\bsnm{Gin{\'e}},~\bfnm{Evarist}\binits{E.}} \AND
  \bauthor{\bsnm{Nickl},~\bfnm{Richard}\binits{R.}}
(\byear{2016}).
\btitle{Mathematical foundations of infinite-dimensional statistical models}
\bvolume{40}.
\bpublisher{Cambridge University Press}.
\end{bbook}
\endbibitem

\bibitem[\protect\citeauthoryear{Huang}{2003}]{huang2003local}
\begin{barticle}[author]
\bauthor{\bsnm{Huang},~\bfnm{Jianhua~Z}\binits{J.~Z.}}
(\byear{2003}).
\btitle{Local asymptotics for polynomial spline regression}.
\bjournal{The Annals of Statistics}
\bvolume{31}
\bpages{1600--1635}.
\end{barticle}
\endbibitem

\bibitem[\protect\citeauthoryear{Koltchinskii and
  Lounici}{2017}]{koltchinskii2017concentration}
\begin{barticle}[author]
\bauthor{\bsnm{Koltchinskii},~\bfnm{Vladimir}\binits{V.}} \AND
  \bauthor{\bsnm{Lounici},~\bfnm{Karim}\binits{K.}}
(\byear{2017}).
\btitle{Concentration inequalities and moment bounds for sample covariance
  operators}.
\bjournal{Bernoulli}
\bvolume{23}
\bpages{110--133}.
\end{barticle}
\endbibitem

\bibitem[\protect\citeauthoryear{Liu et~al.}{2021}]{mukherjee2016adaptive}
\begin{barticle}[author]
\bauthor{\bsnm{Liu},~\bfnm{Lin}\binits{L.}},
  \bauthor{\bsnm{Mukherjee},~\bfnm{Rajarsh}\binits{R.}},
  \bauthor{\bsnm{Robins},~\bfnm{James~M}\binits{J.~M.}} \AND
  \bauthor{\bsnm{Tchetgen~Tchetgen},~\bfnm{Eric}\binits{E.}}
(\byear{2021}).
\btitle{Adaptive estimation of nonparametric functionals}.
\bjournal{Journal of Machine Learning Research}
\bvolume{22}
\bpages{1--66}.
\end{barticle}
\endbibitem

\bibitem[\protect\citeauthoryear{Mallat}{1999}]{mallat1999wavelet}
\begin{bbook}[author]
\bauthor{\bsnm{Mallat},~\bfnm{St{\'e}phane}\binits{S.}}
(\byear{1999}).
\btitle{A wavelet tour of signal processing}.
\bpublisher{Elsevier}.
\end{bbook}
\endbibitem

\bibitem[\protect\citeauthoryear{Robins et~al.}{2008}]{robins2008higher}
\begin{bincollection}[author]
\bauthor{\bsnm{Robins},~\bfnm{James}\binits{J.}},
  \bauthor{\bsnm{Li},~\bfnm{Lingling}\binits{L.}},
  \bauthor{\bsnm{Tchetgen~Tchetgen},~\bfnm{Eric}\binits{E.}} \AND
  \bauthor{\bparticle{van~der} \bsnm{Vaart},~\bfnm{Aad}\binits{A.}}
(\byear{2008}).
\btitle{Higher order influence functions and minimax estimation of nonlinear
  functionals}.
In \bbooktitle{Probability and Statistics: Essays in Honor of David A.
  Freedman}
\bpages{335--421}.
\bpublisher{Institute of Mathematical Statistics}.
\end{bincollection}
\endbibitem

\bibitem[\protect\citeauthoryear{Robins et~al.}{2017}]{robins2017minimax}
\begin{barticle}[author]
\bauthor{\bsnm{Robins},~\bfnm{James~M}\binits{J.~M.}},
  \bauthor{\bsnm{Li},~\bfnm{Lingling}\binits{L.}},
  \bauthor{\bsnm{Mukherjee},~\bfnm{Rajarshi}\binits{R.}},
  \bauthor{\bsnm{Tchetgen~Tchetgen},~\bfnm{Eric}\binits{E.}} \AND
  \bauthor{\bparticle{van~der} \bsnm{Vaart},~\bfnm{Aad}\binits{A.}}
(\byear{2017}).
\btitle{Minimax estimation of a functional on a structured high-dimensional
  model}.
\bjournal{The Annals of Statistics}
\bvolume{45}
\bpages{1951--1987}.
\end{barticle}
\endbibitem

\bibitem[\protect\citeauthoryear{Rudelson}{1999}]{rudelson1999random}
\begin{barticle}[author]
\bauthor{\bsnm{Rudelson},~\bfnm{Mark}\binits{M.}}
(\byear{1999}).
\btitle{Random vectors in the isotropic position}.
\bjournal{Journal of Functional Analysis}
\bvolume{164}
\bpages{60--72}.
\end{barticle}
\endbibitem

\bibitem[\protect\citeauthoryear{Valiant and
  Valiant}{2017}]{valiant2017automatic}
\begin{barticle}[author]
\bauthor{\bsnm{Valiant},~\bfnm{Gregory}\binits{G.}} \AND
  \bauthor{\bsnm{Valiant},~\bfnm{Paul}\binits{P.}}
(\byear{2017}).
\btitle{An automatic inequality prover and instance optimal identity testing}.
\bjournal{SIAM Journal on Computing}
\bvolume{46}
\bpages{429--455}.
\end{barticle}
\endbibitem

\bibitem[\protect\citeauthoryear{Vershynin}{2010}]{vershynin2010introduction}
\begin{barticle}[author]
\bauthor{\bsnm{Vershynin},~\bfnm{Roman}\binits{R.}}
(\byear{2010}).
\btitle{Introduction to the non-asymptotic analysis of random matrices}.
\bjournal{arXiv preprint arXiv:1011.3027}.
\end{barticle}
\endbibitem

\bibitem[\protect\citeauthoryear{Vershynin}{2012}]{vershynin2012close}
\begin{barticle}[author]
\bauthor{\bsnm{Vershynin},~\bfnm{Roman}\binits{R.}}
(\byear{2012}).
\btitle{How close is the sample covariance matrix to the actual covariance
  matrix?}
\bjournal{Journal of Theoretical Probability}
\bvolume{25}
\bpages{655--686}.
\end{barticle}
\endbibitem

\bibitem[\protect\citeauthoryear{Xu, Liu and Liu}{2022}]{xu2022deepmed}
\begin{barticle}[author]
\bauthor{\bsnm{Xu},~\bfnm{Siqi}\binits{S.}},
  \bauthor{\bsnm{Liu},~\bfnm{Lin}\binits{L.}} \AND
  \bauthor{\bsnm{Liu},~\bfnm{Zhonghua}\binits{Z.}}
(\byear{2022}).
\btitle{Deep{M}ed: Semiparametric causal mediation analysis with debiased deep
  learning}.
\bjournal{Advances in Neural Information Processing Systems}
\bvolume{35}
\bpages{28238--28251}.
\end{barticle}
\endbibitem

\end{thebibliography}
\end{document}